\documentclass[11pt]{article}
\usepackage[margin=1in]{geometry}
\usepackage[T1]{fontenc}
\IfFileExists{lmodern.sty}{\usepackage{lmodern}}{}
\usepackage{microtype}
\usepackage{xcolor}
\usepackage{amsmath,amssymb,amsthm}
\usepackage{enumitem}
\usepackage{esint}
\usepackage{hyperref}
\hypersetup{hidelinks}
\numberwithin{equation}{section}
\theoremstyle{plain}
\newtheorem{theorem}{Theorem}[section]
\newtheorem{proposition}[theorem]{Proposition}
\newtheorem{lemma}[theorem]{Lemma}
\newtheorem{corollary}[theorem]{Corollary}
\newtheorem*{maintheorem}{Theorem A}
\theoremstyle{definition}
\newtheorem{definition}[theorem]{Definition}
\theoremstyle{remark}
\newtheorem{remark}[theorem]{Remark}
\DeclareMathOperator{\Tr}{Tr}
\DeclareMathOperator{\ad}{ad}
\DeclareMathOperator{\vol}{vol}
\DeclareMathOperator{\Ric}{Ric}
\DeclareMathOperator{\Scal}{Scal}

\newcommand{\R}{\mathbb R}
\newcommand{\dd}{\,d}
\newcommand{\D}{\mathcal D}
\newcommand{\g}{\mathfrak g}

\title{Asymptotic Mean Value Laplacian\\ on equiregular sub-Riemannian manifolds}

\author{Fabrice Baudoin\thanks{F. Baudoin: Department of Mathematics, Aarhus University, Ny Munkegade 118, DK-8000 Aarhus C, Denmark. fbaudoin@math.au.dk} \and  Jonathan Junn\'e\thanks{J. Junn\'e: INRIA Rennes, Univ Rennes \& Institut de Recherche Mathématiques de Rennes,
CNRS UMR 6625 Rennes, Campus Beaulieu F-35042 Rennes Cedex, France. jonathan.junne@inria.fr} \and 
  David Tewodrose\thanks{D. Tewodrose: Department of Mathematics and Data Science, Vrije Universiteit Brussel, Pleinlaan 2, B-1050 Elsene, Belgium. david.tewodrose@vub.be}
}
\date{}
\begin{document}
\maketitle
\begin{abstract}
Let $(M,\D,g)$ be a smooth equiregular sub-Riemannian manifold equipped
with a smooth positive measure $\mu$. We study the small-scale limit of the metric-ball
mean-value operator
\[
  A_hf(x)=h^{-2}\fint_{B(x,h)}(f(q)-f(x))\,\dd\mu(q).
\]
Exact homogeneity yields the pointwise limit on Carnot groups. On a general
equiregular manifold, convergence for every smooth test function is
equivalent to convergence of the rescaled horizontal first moments of
metric balls in first-kind privileged coordinates. This criterion is
independent of $\mu$; when it holds, the principal symbol is determined by
the normalized second-moment tensor of the tangent unit ball, and the drift
satisfies an explicit change-of-measure formula. In step at most two, we
verify the criterion by combining a real-analytic finite-jet reduction with
tame integration, which rules out oscillation of the normalized moments. We
also compute the limit on Lie groups and prove unconditional distributional
convergence of the volume-weighted operators on every equiregular manifold.
\end{abstract}
\tableofcontents
\section{Introduction}\label{sec:introduction}
For a smooth function $f$ on $\R^n$, averaging over a small Euclidean ball centered at $x$ and performing a Taylor expansion yields that
\[
  \fint_{B(x,h)}f(q)\,\dd q-f(x)
  =\frac{h^2}{2(n+2)}\Delta f(x)+o(h^2) \qquad \text{as $h \downarrow 0$.}
\]
This identity motivates a metric-measure definition of the Laplace operator. If $d$ is the
Carnot--Carath\'eodory distance of a sub-Riemannian structure and $\mu$ is a
smooth positive measure, set
\begin{equation}\label{eq:AMV-def}
  A_hf(x):=\frac1{h^2}\fint_{B(x,h)}
  \bigl(f(q)-f(x)\bigr)\,\dd\mu(q).
\end{equation}
This paper aims to determine whether $A_hf$ has a limit as $h\downarrow0$
and, if so, to identify the limiting differential operator, known as the Asymptotic Mean Value (AMV) Laplacian.

In the Riemannian case, Taylor expansion in normal coordinates gives
\[
A_h f(x) = \sum_{i} \partial_{x_i} \tilde{f}(0) \left(\frac{1}{h}  \fint_{\mathbb{B}} \xi_i d \xi\right) + \frac{1}{2}\sum_{i,j} \partial_{x_i x_j}^2 \tilde{f}(0)  \fint_{\mathbb{B}} \xi_i \xi_j d \xi + o(1)
\]
  where $\mathbb{B}$ is the unit Euclidean ball and $\tilde{f} = f \circ \exp_x$. Central symmetry of $\mathbb{B}$ ensures that the first moments $ \fint_{\mathbb{B}} \xi_i d \xi$ vanish and that the second moments $\fint_{\mathbb{B}} \xi_i \xi_j d \xi$ are equal to a constant multiple of the Kronecker symbol, so that $A_h f(x)$ converges to the Laplace--Beltrami operator, up to a multiplicative constant.

In the general sub-Riemannian setting, the principal difficulty lies in the first moments. After rescaling in
privileged coordinates, metric balls converge to the unit ball of the
nilpotent approximation; their volumes and quadratic moments therefore
converge. Although the tangent ball is centrally symmetric, this yields only
an $o(1)$ estimate for the first moments of the rescaled balls, whereas the
expansion of \eqref{eq:AMV-def} divides these moments by $h$. Thus pointwise
convergence requires a sharper centering statement and does not follow from
tangent-ball convergence alone.

The present paper treats \eqref{eq:AMV-def} on a smooth equiregular
sub-Riemannian manifold $(M,\D,g)$ with an arbitrary smooth positive
measure. Its main conclusions are summarized as follows.
\begin{maintheorem}
Let $(M,\D,g)$ be a smooth equiregular sub-Riemannian manifold and let
$\mu$ be a smooth positive measure. If the structure has step at most two,
then $A_hf(x)$ converges for every $x\in M$ and
$f\in C_c^\infty(M)$ as $h \downarrow 0$. The convergence also holds in
$L^p_{\mathrm{loc}}(M,\mu)$ for every $1\le p<\infty$.
\end{maintheorem}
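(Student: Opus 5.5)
The plan follows the strategy announced in the abstract.

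\textbf{Reduction to first moments.} Fix $x$ and first-kind privileged coordinates $\psi_x$ centered at $x$, with weights $w_1=\dots=w_{n_1}=1$ and $w_j=2$ for $j>n_1$. Let $\delta_h$ be the anisotropic dilations and $\mathbb B_h=\delta_{1/h}\psi_x(B(x,h))$ the rescaled ball. Then $\mathbb B_h\to\mathbb B$ in Hausdorff sense and in measure, where $\mathbb B$ is the unit ball of the nilpotent approximation. Write $\mu=\rho\,d\xi$ in these coordinates. We Taylor expand $\tilde f=f\circ\psi_x^{-1}$ to weighted order two:
\[
\tilde f(\delta_h\xi)-\tilde f(0)=h\sum_{i\le n_1}\partial_i\tilde f(0)\xi_i+h^2\Bigl(\sum_{j>n_1}\partial_j\tilde f(0)\xi_j+\tfrac12\sum_{i,k\le n_1}\partial_{ik}\tilde f(0)\xi_i\xi_k\Bigr)+O(h^3).
\]
We also expand $\rho(\delta_h\xi)=\rho(0)+h\langle\nabla_H\rho(0),\xi_H\rangle+O(h^2)$. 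Dividing by $h^2$, every term converges except
\[
h^{-1}\fint_{\mathbb B_h}\xi_i\,d\xi,\qquad i\le n_1 .
\]
The contributions of the $\rho$-correction are quadratic moments. The second-layer first moments vanish in the limit by central symmetry of $\mathbb B$ under $\xi\mapsto-\xi$ (the nilpotent ball is invariant under inversion). So convergence for all $f$ is equivalent to convergence of these rescaled horizontal first moments $m_h(x)$, and this criterion is independent of $\mu$. This is the criterion stated in the abstract, which I take as established earlier.

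\textbf{Step two: controlling $m_h$.} By central symmetry of $\mathbb B$, $m_h(x)=h^{-1}\int(\mathbf 1_{\mathbb B_h}-\mathbf 1_{\mathbb B})\xi_i\,d\xi/|\mathbb B_h|$. We need $\mathbb B_h$ to differ from $\mathbb B$ at order $h$, with a convergent first-order coefficient. In step two the distance is determined, up to $O(h^2)$ in rescaled variables, by a finite jet of the vector fields. Write $X_i=\hat X_i+hR_i+O(h^2)$, with $\hat X_i$ homogeneous of degree $-1$ and $R_i$ of degree $0$. The rescaled distance $d_h(0,\xi)=h^{-1}d(x,\delta_h\xi)$ is then the value function of an optimal control problem with analytic dependence on the finitely many jet parameters and on $h$. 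I would replace the original structure by the polynomial (hence real-analytic) structure given by this jet, and show that the difference in balls is $o(h)$ in symmetric difference, weighted by $\xi$.

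\textbf{Main obstacle: non-oscillation.} The main obstacle is that $d_h$ need not be differentiable in $h$ at $h=0$ uniformly: cut-locus and abnormal phenomena can make $(\mathbf 1_{\mathbb B_h}-\mathbf 1_{\mathbb B})/h$ fail to converge in a naive sense. Here I would use tame geometry. For the analytic jet model, restricted to compact parameter sets, the family $\{(h,\xi):d_h(0,\xi)<1\}$ is subanalytic; this uses that the sub-Riemannian distance of an analytic step-two structure is subanalytic, since step two has no nontrivial abnormal minimizers obstructing this. Hence $h\mapsto\int_{\mathbb B_h}\xi_i\,d\xi$ is a subanalytic function of one variable, so it has a Puiseux expansion near $0$. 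Together with the a priori bound $m_h=O(1)$ (from $d_h-\hat d=O(h)$ via the ball-box estimate, giving symmetric difference $O(h)$), this forces $m_h$ to have a limit rather than oscillate.

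\textbf{$L^p_{\mathrm{loc}}$ convergence.} Finally, we get $L^p_{\mathrm{loc}}$ convergence by dominated convergence. The bound $|A_hf(x)|\le C$ holds uniformly on compacts: use the uniform first-moment bound $O(1)$ from the ball-box theorem with constants locally uniform in $x$ (by equiregularity), together with the pointwise limit.
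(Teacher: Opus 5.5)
Your overall architecture matches the paper's: first-moment criterion, polynomial jet model, Agrachev--Sarychev subanalyticity, tame integration, o-minimal non-oscillation, and dominated convergence. The gap is the a priori bound $m_h=O(1)$, which you attribute to the ball-box estimate. That bound does not follow from ball-box.

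Nilpotent approximation gives only $|d_{x,h}(0,\cdot)-\widehat d_x(0,\cdot)|=O(h^{1/s})=O(h^{1/2})$ in step two. The reason is that running a tangent minimizing control in the perturbed frame misses the target by a Euclidean error $O(h)$, and closing a Euclidean gap $\epsilon$ costs $\epsilon^{1/2}$. Correcting at cost $O(h)$ instead would need a uniformly bounded right inverse of the endpoint differential. That inverse does not exist along abnormal minimizers, and these already occur in step two, for example the $\R$-factor lines of $\R\times\mathbb H^1$. So you only get $m_h=O(h^{-1/2})$.

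Boundedness is indispensable. O-minimality gives a limit of $\widetilde m_i(h)/h$ only in $[-\infty,\infty]$. Neither central symmetry of $\widehat B_x$ nor tameness rules out leading terms like $h^{1/2}$ or $h\log h$ in $\widetilde m_i$.

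The paper obtains the linear trapping $\widehat B_x(1-Ch)\subset B_{x,h}\subset\widehat B_x(1+Ch)$ (Proposition~\ref{prop:boundedness}) from a uniform Lipschitz stability of the \emph{squared} distance under $C^1$ perturbations of the frame (Lemma~\ref{lem:app-distance-stability}). The argument runs as follows:
\begin{itemize}
\item proximal subgradients of $\tfrac12 d(0,\cdot)^2$ yield normal Lagrange multipliers of index zero;
\item Rifford's quantitative approximate Goh estimate forces large ones to nearly annihilate the brackets;
\item two-generation then bounds them uniformly.
\end{itemize}
This is the second, and hardest, use of step $\le 2$. You invoke step two only for subanalyticity. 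Your $L^p_{\mathrm{loc}}$ step has the same gap, since it needs this bound uniformly for $x$ in compacts.

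The same issue sits inside your jet comparison. The assertion that the jet $\widehat X_i+hR_i$ determines the distance up to $O(h^2)$ \emph{is} the linear squared-distance stability. With only square-root stability, an $O(h^2)$ frame error gives an $O(h)$ distance error and an $O(h)$, not $o(h)$, symmetric difference.

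Even with linear stability, you only get $B_{x,h}$ squeezed between model balls of radii $\sqrt{1\pm Ch^2}$. You must still show the shell between them has volume $o(h)$, and closeness of distance functions alone does not give this. The paper uses two ingredients here. The first is the exact scaling $|\{d_{Y,h}<r\}|=r^Q|T_{rh}|$. The second is Lipschitz continuity of $h\mapsto|T_h|$, which o-minimality supplies (Lemma~\ref{lem:tame-lipschitz}) once $|T_h|=|\widehat B_x|+O(h)$ is known. That last fact is again linear trapping, now for the model.

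Finally, a minor slip. Fiber integrals of a subanalytic family are generally not subanalytic: the area of $\{(a,b)\in[0,1]^2:ab\le h\}$ is $h-h\log h$. So there is no Puiseux expansion. They are constructible (Lion--Rolin, Cluckers--Miller), hence definable in $\R_{\mathrm{an},\exp}$. Bounded plus eventually monotone then still yields the limit.
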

The pointwise assertion for structures of step at most two is
Theorem~\ref{thm:main}, and its local
$L^p$ refinement is Corollary~\ref{cor:step-two-consequences}. In the
Riemannian case the limit reduces,
for $\mu=e^W\,\mathrm{vol}_g$, to
\[
  A_hf\longrightarrow
  \frac1{2(n+2)}\Delta_gf
  +\frac1{n+2}\langle\nabla W,\nabla f\rangle_g.
\]
For $W=0$ this recovers the Riemannian computation outlined above. 
\subsection*{From Carnot groups to the first-moment criterion}
On a Carnot group the distance, Haar measure, and exponential coordinates
scale exactly. A direct Taylor expansion gives
\[
  A_hf\longrightarrow
  \frac1{2|\widehat B|}\sum_{i,j=1}^k
  \left(\int_{\widehat B}\xi_i\xi_j\,\dd\xi\right)X_iX_jf,
\]
where $X_1,\ldots,X_k$ is an orthonormal basis of the first layer and
$\widehat B$ is the unit Carnot--Carath\'eodory ball. Its horizontal
covariance matrix need not be scalar, so the limit need not be a multiple
of the intrinsic sub-Laplacian. Proposition
\ref{prop:Carnot-isotropic-covariance} gives a useful symmetry criterion for
scalarity.

On a general equiregular manifold, choose smoothly varying first-kind
privileged coordinates $\Phi_x$ and set
\[
  B_{x,h}:=\delta_{1/h}\bigl(\Phi_x^{-1}(B(x,h))\bigr).
\]
The sets $B_{x,h}$ converge to the unit ball $\widehat B_x$ of the
nilpotent approximation, which determines the normalized volume and all
second-order moments. The only remaining obstruction is the
horizontal first moment
\[
  \frac1h\int_{B_{x,h}}\xi_iJ_x(\delta_h\xi)\,\dd\xi,
  \qquad 1\le i\le k,
\]
where $J_x$ is the coordinate density of $\mu$ and
$k=\operatorname{rank}\D$. Theorem~\ref{thm:general-first-moment} shows
that convergence of these $k$ quantities is necessary and sufficient for
pointwise convergence of $A_h$ on all smooth test functions and identifies
the limit as
\[
  \mathcal A_\mu f(x)
  =\frac1{\widehat V_x}\left(
    \frac12\sum_{i,j=1}^kM_{ij}(x)X_iX_jf(x)
    +\sum_{i=1}^k\Gamma_i(x)X_if(x)
  \right).
\]
Here $\widehat V_x$ is the tangent volume of the unit ball $\widehat B_x$ in the nilpotent approximation, $(M_{ij})$ is the unnormalized horizontal second-moment matrix of
$\widehat B_x$, and $(\Gamma_i)$ records the first moments.
Equivalently, convergence is governed by the unweighted first moments of
the rescaled balls and is therefore independent of the chosen smooth
positive measure. Corollary~\ref{cor:smooth-measure-change} gives the
resulting change in drift when the measure is multiplied by a smooth
positive density.

The previous criterion applies directly to Lie groups. For a
connected Lie group with a left-invariant bracket-generating metric and
left Haar measure, first-kind coordinate balls are centrally symmetric. The only
drift comes from the linear term of the Haar density and is expressed by
the modular traces $\Tr(\ad_{X_j})$; see Theorem~\ref{thm:Lie-formula}.

The final part of Section~\ref{sec:general} is independent of the
first-moment criterion. We prove that the volume-weighted operators
$D_hA_h$ always converge in distributions; see Proposition \ref{prop:detailed-balance-convergence}. Consequently, every locally
bounded pointwise limit satisfies
\[
  \widehat V\,\mathcal A_\mu f
  =\frac12\operatorname{div}_\mu\left(
    \sum_{i,j=1}^kM_{ij}(X_jf)X_i
  \right).
\]
Thus, whenever the pointwise limit exists and is locally bounded and the
second-moment coefficients are locally Lipschitz, comparison with the
pointwise limit formula determines its drift almost everywhere from the
second-moment tensor and the measure. Theorem
\ref{thm:main} strengthens this distributional statement to convergence at
every point and local $L^p$ convergence.
\subsection*{The case of step at most two}
Section~\ref{sec:step-two} proves the first-moment condition using tame
geometry and o-minimality.  The argument begins with the uniform estimate
\[
  \widehat B_x(1-Ch)\subset B_{x,h}\subset\widehat B_x(1+Ch),
\]
which bounds the horizontal first moments by $O(h)$. At a fixed base
point, a sufficiently high weighted Taylor jet then produces a
real-analytic two-step comparison structure. Subanalyticity of its
distance and tame integration make the comparison moments constructible;
one-variable o-minimality then excludes oscillation after division by $h$.  The same uniform centering estimate yields local boundedness and local
$L^p$ convergence. We point out that the use of tame geometry and o-minimality ideas
in sub-Riemannian geometry is not new; see \cite{BonnardTrelat2001} and particularly the
recent work \cite{LeDonneLerarioNalonPaddeuRizzi2026}.
\subsection*{Literature}
Asymptotic mean-value operators on metric measure spaces have been studied
in \cite{MinneTewodrose2020,AKS1,AKS2}. The symmetrized AMV operator and
its relation to the non-symmetrized one were studied in
\cite{MinneTewodrose2025}; see also \cite{DiasTewodrose2026} for the
spectral theory of the symmetrized family. In the sub-Riemannian setting, asymptotic
mean-value formulas on a Carnot group have been obtained for balls of a
homogeneous \emph{gauge}: \cite{FerrariLiuManfredi2014} treats
Kor\'anyi gauge balls on Heisenberg groups,
\cite{FerrariPinamonti2015} treats general Carnot groups, and
\cite[Theorem~1.2]{AKS2} covers every homogeneous pseudonorm of the form
$\rho(z)=F(\lVert z^{(1)}\rVert,z')$, with $F$ of class $C^1$ and
$\partial_1F>0$ away from the origin, $z^{(1)}$ denoting the horizontal component;
\cite[Theorem~1.3]{AKS2} treats Kor\'anyi--Reimann gauge balls on step-two
Carnot groups. For genuine Carnot--Carath\'eodory balls, the case of the first Heisenberg
group with Lebesgue measure is \cite[Proposition~2.1]{MinneTewodrose2020}.
Theorem~\ref{thm:Carnot-limit} below recovers this Heisenberg result. Sub-Laplacians obtained from Hamiltonian or intrinsic geodesic random walks are investigated in
\cite{GordinaLaetsch2017,GordinaLaetsch2016,BoscainNeelRizzi2017}; those constructions average along geodesic data rather than over metric balls.

\section{Pointwise limits, the first-moment criterion and distributional convergence}
\label{sec:general}
We begin with Carnot groups, the tangent cones to general equiregular
sub-Riemannian manifolds; there the pointwise limit easily follows from exact
scaling. We then isolate the sole obstruction on an
equiregular manifold for the existence of pointwise limits, apply the resulting criterion to Lie groups, and
conclude with an unconditional distributional convergence result.
\subsection{Carnot groups}
\label{subsec:Carnot-groups}
Let $G$ be a Carnot group with identity element $e$ and stratified Lie algebra
\[
  \g=V_1\oplus\cdots\oplus V_s,
  \qquad [V_1,V_j]=V_{j+1}\quad(1\le j<s),
\]
and let $d$ be the Carnot-Carath\'eodory metric on $G$ induced by an inner product $\langle \cdot, \cdot \rangle$ on $V_1$.
Let $X_1,\ldots,X_k$ be an orthonormal basis of $V_1$ and complete it
to a graded basis $X_1,\ldots,X_n$ of $\g$. Write $w_a=j$ when
$X_a\in V_j$. We use the associated first-kind exponential coordinates
\[
(\xi_1,\ldots,\xi_n) \in \mathbb{R}^n \quad \leftrightarrow \quad \exp_G\left( \sum_{a=1}^n \xi_a X_a\right) \in G
\]where $\exp_G : \g \to G$ is the usual exponential map, and normalize Haar measure $\mu$ to be Lebesgue measure in these coordinates. If $\widehat B$ is the unit ball, set
\begin{equation*}
  \widehat V=|\widehat B|,
  \qquad
  M_{ij}=\int_{\widehat B}\xi_i\xi_j\,\dd\xi.
\end{equation*}
The group dilations $(\delta_h)_{h>0}$ satisfy
\[
  B(e,h)=\delta_h\widehat B
\]
where $B(e,h)$ is the ball with center $e$ and radius $h$.
\begin{theorem}[Carnot-group limit]
\label{thm:Carnot-limit}
For every $f\in C_c^\infty(G)$, locally uniformly in $x$,
\begin{equation}\label{eq:Carnot-limit}
  A_hf(x)\longrightarrow
  \frac1{2\widehat V}
  \sum_{i,j=1}^kM_{ij}X_iX_jf(x).
\end{equation}
\end{theorem}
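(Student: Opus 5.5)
We reduce to the identity by left invariance and then Taylor-expand in exponential coordinates around $x$, using exact dilation homogeneity. Since $d$ and the Haar measure are left invariant and $\mu$ is bi-invariant up to the modular function (Carnot groups are nilpotent, hence unimodular), we have $B(x,h)=x\cdot\delta_h\widehat B$. Writing $q=x\exp_G(\sum_a\eta_aX_a)$ with $\eta\in\delta_h\widehat B$, we get
\[
A_hf(x)=\frac1{h^2|\delta_h\widehat B|}\int_{\delta_h\widehat B}\bigl(F_x(\eta)-F_x(0)\bigr)\,\dd\eta,
\qquad F_x(\eta):=f\bigl(x\exp_G(\textstyle\sum_a\eta_aX_a)\bigr).
\]
The Jacobian of $\delta_h$ in exponential coordinates is $h^{Q}$ with $Q=\sum_a w_a$. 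Changing variables $\eta=\delta_h\xi$ therefore gives
\[
A_hf(x)=\frac1{h^2\widehat V}\int_{\widehat B}\bigl(F_x(\delta_h\xi)-F_x(0)\bigr)\,\dd\xi .
\]

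Next we Taylor-expand $F_x$ at $0$ with weighted bookkeeping. In $\delta_h\xi$ the coordinate $\eta_a=h^{w_a}\xi_a$ has size $h^{w_a}$, and $\widehat B$ is bounded. Expanding $F_x$ to ordinary order $2$ (all monomials of ordinary degree at most two), the terms of weighted degree at most $2$ are of three kinds:
\begin{itemize}[label={}]
\item the linear horizontal terms $h\,\partial_iF_x(0)\xi_i$ with $i\le k$;
\item the linear degree-two terms $h^2\partial_aF_x(0)\xi_a$ with $w_a=2$;
\item the quadratic horizontal terms $\tfrac{h^2}{2}\partial_{ij}F_x(0)\xi_i\xi_j$ with $i,j\le k$.
\end{itemize}
Every other term is $O(h^3)$ uniformly for $\xi\in\widehat B$. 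The Taylor remainder is bounded by third derivatives of $F_x$ on a fixed compact set, so this $O(h^3)$ is uniform for $x$ in compacts; indeed $F_x$ depends smoothly on $x$ and $f$ has compact support.

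The first-order terms vanish because the first moments vanish. The map $\xi\mapsto-\xi$ in first-kind coordinates is group inversion, and $d(e,g^{-1})=d(g,e)=d(e,g)$, so $\widehat B$ is centrally symmetric and $\int_{\widehat B}\xi_a\,\dd\xi=0$ for every $a$. Both linear contributions therefore vanish identically, not just to leading order. This is the step that removes the $1/h$ obstruction, and here it is exact.

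It remains to identify the quadratic coefficients. On the curve $t\mapsto\exp_G(t\sum_{i\le k}\xi_iX_i)$ we have $F_x(t\xi^{(1)})=f(x\exp_G(tY))$ with $Y=\sum_{i\le k}\xi_iX_i$. Differentiating twice at $t=0$ gives $\sum_{i,j\le k}\partial_{ij}F_x(0)\xi_i\xi_j=Y^2f(x)=\sum_{i,j}\xi_i\xi_jX_iX_jf(x)$. Because $\xi_i\xi_j$ is symmetric, matching the quadratic forms suffices, and only the symmetrized $X_iX_j$ survives, which is exactly what $M_{ij}$ pairs with. Integrating then yields
\[
A_hf(x)=\frac{1}{2\widehat V}\sum_{i,j\le k}M_{ij}X_iX_jf(x)+O(h),
\]
with the $O(h)$ uniform on compact sets.

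The only delicate point is the uniformity of the remainder in $x$. We handle it by viewing $(x,\eta)\mapsto F_x(\eta)$ as a smooth function on $G\times\R^n$ and bounding its third $\eta$-derivatives on $K\times\overline{\widehat B}$ for $h\le1$. All the remaining steps are exact consequences of homogeneity and symmetry.
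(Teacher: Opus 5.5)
Your proposal is correct and follows essentially the same route as the paper. Left invariance and exact dilation reduce $A_hf(x)$ to an integral over $\widehat B$ with prefactor $\frac{1}{h^2\widehat V}$, a Taylor expansion keeping the terms of weighted degree at most two leaves linear terms that vanish exactly by $\widehat B=-\widehat B$, and the horizontal quadratic term integrates to $\frac{1}{2\widehat V}\sum_{i,j}M_{ij}X_iX_jf(x)$.

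The only difference is cosmetic. The paper expands along the flow as $Y_{h,\xi}f+\frac12Y_{h,\xi}^2f+O(h^3)$, which gives the coefficients $X_if$ and $X_iX_jf$ directly; you expand in ordinary coordinates and recover the horizontal Hessian from the curve $t\mapsto x\exp_G(tY)$. For the remainder, bound the third derivatives on a Euclidean ball containing $\overline{\widehat B}$ rather than on $\overline{\widehat B}$ itself, since that ball contains every segment $[0,\delta_h\xi]$ for $h\le1$.
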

\begin{proof}
Left invariance, the dilation identity, and the homogeneity of Haar measure
give
\[
  A_hf(x)=\frac1{h^2\widehat V}\int_{\widehat B}
  \left[f\!\left(x\exp_G\!\left(\sum_{a=1}^n
  h^{w_a}\xi_aX_a\right)\right)-f(x)\right]\dd\xi.
\]
For $Y_{h,\xi}:=\sum_a h^{w_a}\xi_aX_a$, Taylor's formula for the
time-one flow gives
\[
  f(x\exp_GY_{h,\xi})-f(x)
  =Y_{h,\xi}f(x)+\frac12Y_{h,\xi}^2f(x)+O(h^3).
\]
The remainder is uniform for $(x,\xi)\in G\times\widehat B$.  Indeed,
$\overline{\widehat B}$ is compact, all left-invariant derivatives of
$f\in C_c^\infty(G)$ of order at most three are bounded on $G$, and
$\sup_{\xi\in\widehat B}\lVert Y_{h,\xi}\rVert\le Ch$ for $0<h\le1$ in
any fixed norm on $\mathfrak g$.  Expanding the first two terms and
retaining the terms of weighted degree at most two therefore yields, with
the same uniformity,
\[
\begin{split}
  f(x\exp_GY_{h,\xi})-f(x)
  &=h\sum_{i=1}^k\xi_iX_if(x)
    +h^2\sum_{w_a=2}\xi_aX_af(x)\\
  &\quad+\frac{h^2}{2}\sum_{i,j=1}^k
       \xi_i\xi_jX_iX_jf(x)+O(h^3).
\end{split}
\]
In first-kind coordinates, group inversion is $\xi\mapsto-\xi$.
Consequently, $\widehat B=-\widehat B$, and both linear terms integrate to
zero. Dividing by
$h^2\widehat V$ proves \eqref{eq:Carnot-limit}.
\end{proof}
The covariance tensor
$\bigl(\int_{\widehat B}\xi_i\xi_j\,\dd\xi\bigr)_{ij}$ need not be
scalar, so the limit need not be a multiple of the sub-Laplacian. The
following proposition gives a sufficient symmetry condition. Write
\[
  \xi_H:=\sum_{i=1}^k\xi_iX_i,\qquad
  K_G:=\{A\in\mathrm O(V_1):A\text{ extends to a graded automorphism of }
  \mathfrak g\}.
\]
\begin{proposition}[Symmetry criterion for scalar covariance]
\label{prop:Carnot-isotropic-covariance}
The covariance operator $\mathbf M:V_1\to V_1$ defined by
\begin{equation}\label{eq:Carnot-covariance-form}
  \langle\mathbf M u,v\rangle
  :=\int_{\widehat B}
  \langle\xi_H,u\rangle\langle\xi_H,v\rangle\,\dd\xi
\end{equation}
commutes with $K_G$. Moreover, if the representation of $K_G$ on $V_1$ is
irreducible, then $$\mathbf M=m_G I,\qquad m_G = \frac{1}{k} \int_{\widehat B} |\xi_H|^2 \dd\xi, $$ and therefore
\[
  A_hf\longrightarrow\kappa_G\Delta_Hf,\qquad
  \kappa_G=\frac{m_G}{2\widehat V}
  ,\qquad
  \Delta_H=\sum_{i=1}^kX_i^2.
\]
This happens, in particular, when $K_G$ acts transitively on the unit
sphere.
\end{proposition}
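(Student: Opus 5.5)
The plan is to show that every $A\in K_G$ preserves $\widehat B$ and Lebesgue measure in first-kind coordinates, and then to apply Schur's lemma.

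\textbf{Invariance of the ball.} Let $A\in K_G$ and let $\Phi:\mathfrak g\to\mathfrak g$ be a graded automorphism extending $A$. Since $G$ is simply connected nilpotent and $\exp_G$ is a diffeomorphism, $\Phi$ integrates to a group automorphism $\phi$ of $G$ satisfying $\phi\circ\exp_G=\exp_G\circ\Phi$. Because $\Phi$ preserves $V_1$ and acts on it isometrically, $\phi$ maps horizontal curves to horizontal curves of the same length. Applying the same argument to $\Phi^{-1}$ shows that $\phi$ is an isometry of $d$ fixing $e$. Hence $\Phi(\widehat B)=\widehat B$ in exponential coordinates. The map $\Phi$ is linear and preserves the grading, so $|\det\Phi|=\prod_j|\det\Phi|_{V_j}|$. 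To see this determinant is $1$, note that $\Phi$ preserves the bounded set $\widehat B$ of positive measure, so $|\Phi(\widehat B)|=|\det\Phi|\,|\widehat B|$ forces $|\det\Phi|=1$. This measure argument is the cleanest route and avoids computing determinants layer by layer.

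\textbf{Equivariance.} Change variables $\eta=\Phi\xi$ in \eqref{eq:Carnot-covariance-form}. Since $(\Phi\xi)_H=A\xi_H$ (the horizontal part commutes with the graded map), we obtain
\[
\langle\mathbf M u,v\rangle=\int_{\widehat B}\langle A\xi_H,u\rangle\langle A\xi_H,v\rangle\,\dd\xi=\langle\mathbf M A^{T}u,A^{T}v\rangle .
\]
Because $A$ is orthogonal, this gives $A\mathbf M A^{T}=\mathbf M$, that is, $\mathbf M$ commutes with $K_G$.

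\textbf{Schur and the limit.} The operator $\mathbf M$ is symmetric. For any eigenvalue $\lambda$, the eigenspace $\ker(\mathbf M-\lambda)$ is a nonzero $K_G$-invariant subspace, so irreducibility forces $\mathbf M=\lambda I$; working with a real eigenvalue of a symmetric operator avoids any issue with real versus complex Schur. Taking traces, $k\lambda=\int_{\widehat B}|\xi_H|^2\,\dd\xi$, so $\lambda=m_G$. Then $M_{ij}=\langle\mathbf M X_i,X_j\rangle=m_G\delta_{ij}$, and Theorem~\ref{thm:Carnot-limit} gives the limit $\frac{m_G}{2\widehat V}\Delta_Hf$. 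Finally, if $K_G$ acts transitively on the unit sphere, then any nonzero invariant subspace contains a unit vector and hence the entire sphere, so it equals $V_1$; the action is therefore irreducible.

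The main obstacle is the invariance step. It requires justifying that the graded automorphism integrates to an isometry of the Carnot–Carathéodory distance and preserves Lebesgue measure in first-kind coordinates; the remaining steps are linear algebra.
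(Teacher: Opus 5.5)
Your proposal is correct and follows the same route as the paper: integrate the graded extension to a Carnot--Carath\'eodory isometry fixing $e$, change variables to obtain $A\mathbf M A^{\top}=\mathbf M$, use invariance of the eigenspaces of the symmetric operator $\mathbf M$, and note that transitivity implies irreducibility. The only variation is how you get $|\det\Phi|=1$: you read it off from $\Phi(\widehat B)=\widehat B$ and $0<|\widehat B|<\infty$, whereas the paper shows that $K_G$ is compact and that $A\mapsto|\det\phi_A|$ is a continuous homomorphism into $(\R_{>0},\cdot)$; your version is slightly more economical, since it needs neither closedness of the extendability condition nor continuity of $A\mapsto\phi_A$.
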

\begin{proof}
Since $\mathfrak g$ is generated by $V_1$, a graded automorphism is
determined by its restriction to $V_1$; hence every $A\in K_G$ admits a
unique graded extension $\phi_A$, and $A\mapsto\phi_A$ is an injective
continuous group homomorphism. Extendability is a closed condition on
$A$, so $K_G$ is a compact subgroup of $\mathrm O(V_1)$. Consequently,
$A\mapsto|\det\phi_A|$ is a continuous homomorphism from a compact group
into $(\R_{>0},\cdot)$ and is therefore identically one. Let $F_A$ be the
automorphism of $G$ integrating $\phi_A$. In exponential coordinates,
$F_A=\phi_A$ is linear and preserves Lebesgue measure, and orthogonality
on $V_1$ makes $F_A$ a Carnot--Carath\'eodory isometry fixing the identity,
so $\phi_A\widehat B=\widehat B$. Since $\phi_A$ is graded,
$(\phi_A\eta)_H=A\eta_H$ for any $\eta \in \g$. Substituting $\xi=\phi_A\eta$ in
\eqref{eq:Carnot-covariance-form} gives
\[
  \langle\mathbf Mu,v\rangle
  =\int_{\widehat B}\langle A\eta_H,u\rangle\langle A\eta_H,v\rangle\dd\eta
  =\langle\mathbf MA^{\top}u,A^{\top}v\rangle
  =\langle A\mathbf MA^{\top}u,v\rangle.
\]
Thus $\mathbf M=A\mathbf MA^{\top}$, or equivalently
$A\mathbf M=\mathbf M A$. Since $\mathbf M$ is symmetric and positive
definite, its eigenspaces are $K_G$-invariant. Irreducibility therefore
forces $\mathbf M=m_GI$. Finally, a transitive orthogonal action is
irreducible.
\end{proof}
\begin{remark}[Basic scalar-covariance examples]
For $\R^k$, $K_G=\mathrm O(k)$ and
$M_{ij}=|B_{\R^k}(0,1)|\delta_{ij}/(k+2)$, giving
$\kappa_G=1/(2(k+2))$. For the standard Heisenberg group $\mathbb H^m$, an
element $A\in\mathrm O(2m)$ extends to a graded automorphism if and only if
$A^{*}\omega=\lambda\omega$ for some $\lambda\in\R$, where $\omega$ is the
symplectic form on $V_1$ defined by the bracket. Hence
$\mathrm U(m)=\mathrm O(2m)\cap\mathrm{Sp}(2m,\R)$ is contained in $K_G$
and acts transitively on the unit sphere of $V_1\simeq\R^{2m}$. See
\cite{LeDonneOttazzi2016} for the structure of the isometry group of a
Carnot group. For the free
Carnot group of any step, every element of $\mathrm O(k)$ extends by the
universal property. Thus the covariance is scalar in all three families.
\end{remark}
\begin{remark}[A product group with non-scalar covariance]
\label{rem:non-scalar-covariance}
 Let $G_0$ be a Carnot group of homogeneous dimension $Q_0$
whose own covariance is scalar, write $\widehat B_0$ for its unit
Carnot--Carath\'eodory ball and
$m_0:=\int_{\widehat B_0}\xi_1^2\,\dd\xi$, and let
$G=\R\times G_0$ carry the product metric, the extra horizontal generator
being denoted $X_0$. Then
$\widehat d\bigl((t,p),(0,e)\bigr)^2=t^2+\widehat d_0(e,p)^2$, so the slice of
$\widehat B$ at height $t$ is $\widehat B_0\bigl(\sqrt{1-t^2}\bigr)$, and
homogeneity of the dilations of $G_0$ gives
\[
  M_{00}=|\widehat B_0|\int_{-1}^1t^2(1-t^2)^{Q_0/2}\,\dd t,
  \qquad
  M_{11}=m_0\int_{-1}^1(1-t^2)^{(Q_0+2)/2}\,\dd t,
\]
all mixed moments vanishing by symmetry. Both integrals are Beta
integrals, and, with $p=Q_0/2$,
\[
 \frac{B(\tfrac32,p+1)}{B(\tfrac12,p+2)}=\frac1{2p+2},
\]
so
\begin{equation}\label{eq:product-covariance-ratio}
  \frac{M_{00}}{M_{11}}
  =\frac1{Q_0+2}\cdot\frac{|\widehat B_0|}{m_0}.
\end{equation}
Hence the covariance of $\R\times G_0$ is scalar if and only if
\begin{equation}\label{eq:scalar-product-criterion}
  \frac{m_0}{|\widehat B_0|}=\frac1{Q_0+2},
\end{equation}
that is, if and only if the unit ball of $G_0$ has the normalized
horizontal second moment of a Euclidean ball of dimension $Q_0$. For
$G_0=\R^k$ this holds, as expected. For \(G_0=\mathbb H^1\), the geodesic cylindrical coordinates with explicit Jacobian \cite[(36), (45)]{BarilariBeschastnyiLerario2020} give, after
integration in the radial and angular variables,
\[
 \frac{m_0}{|\widehat B_0|}
 =\frac{2}{3}\,
 \frac{\displaystyle\int_0^{2\pi}
   \frac{(1-\cos w)\bigl(2-2\cos w-w\sin w\bigr)}{w^6}\,\dd w}
 {\displaystyle\int_0^{2\pi}
   \frac{2-2\cos w-w\sin w}{w^4}\,\dd w}.
\]
Writing
\(\operatorname{Si}(s):=\int_0^s(\sin u)/u\,\dd u\), repeated integration by parts
therefore yields
\[
 \frac{m_0}{|\widehat B_0|}
 =
 \frac{3-2\pi\operatorname{Si}(2\pi)
          +16\pi\operatorname{Si}(4\pi)}
 {30\bigl(1+2\pi\operatorname{Si}(2\pi)\bigr)}
 =0.232392\ldots>\frac16=\frac1{Q_0+2}.
\]
Thus the covariance of \(\mathbb R\times\mathbb H^1\) is not scalar.
\end{remark}
\subsection{Equiregular manifolds and tangent balls}
\label{subsec:general-definitions}
Let $M$ be a connected smooth manifold of dimension $n$. A sub-Riemannian
structure on $M$ is a pair $(\D,g)$, where $\D\subset TM$ is a
bracket-generating vector subbundle, and $g$ is a smooth inner product on
$\D$. Its flag is
\begin{equation}\label{eq:distribution-flag}
  \D^0=\{0\},\qquad \D^1=\D,\qquad
  \D^{j+1}=\D^j+[\D^j,\D].
\end{equation}
We assume that the structure is equiregular, so the integers
$n_j:=\dim\D_x^j$ are independent of $x$. If $s$ is the step, then
\begin{equation}\label{eq:growth-data}
  0=n_0<n_1=k<\cdots<n_s=n,
  \qquad
  Q:=\sum_{j=1}^s j(n_j-n_{j-1}).
\end{equation}
Here $k=\operatorname{rank}\D$ and $Q$ is the homogeneous dimension. For
$n_{j-1}<a\leq n_j$, set $w_a:=j$.
The Carnot--Carath\'eodory distance $d$ is the infimum of the $g$-lengths of
absolutely continuous curves tangent to $\D$ almost everywhere. By the
Chow--Rashevskii theorem, $d$ is finite and induces the manifold topology.
General references for sub-Riemannian geometry are
\cite{Montgomery2002,ABB2020}.

Let $O\subset M$ be open. An \emph{adapted bracket frame} on $O$ is a frame
$X_1,\ldots,X_n$ with the following properties: (1) the fields
$X_1,\ldots,X_k$ are an orthonormal frame of $\D$, (2)
$X_1,\ldots,X_{n_j}$ span $\D^j$ for any $j$, (3) if $n_{j-1}<a\leq n_j$,  the class of
$X_a$ in $\D^j/\D^{j-1}$ is represented by a bracket of length $j$ in the
horizontal fields. Such a frame always exists locally. Let us recall the notion of privileged coordinates, see \cite[Sections~4--5]{Bellaiche1996}, \cite[Section~2.1]{Jean2014}, or
\cite[Chapter~10]{ABB2020} for further details.

\begin{definition}[Privileged coordinates]
A coordinate system $z=(z_1,\ldots,z_n)$ centered at $x$ is
\emph{privileged} if $z_a$ has nonholonomic order $w_a$ at $x$ for every
$a$. Equivalently, $w_a$ is the least length of an iterated horizontal
derivative of $z_a$ that does not vanish at $x$.
\end{definition}

We use canonical privileged coordinates of the first kind. If $Y$ is a smooth vector
field, $\exp(Y)(x)$ denotes its time-one flow from $x$.
Given an adapted bracket frame $X_1,\ldots,X_n$ on $O$ and  $x\in O$, set
\begin{equation}\label{eq:first-kind-chart}
  \Phi_x(z):=\exp\!\left(\sum_{a=1}^n z_aX_a\right)(x),
  \qquad z\in\R^n.
\end{equation}
Then the following holds.
\begin{proposition}[First-kind charts]
\label{prop:first-kind-privileged}
Let $(M,\D,g)$ be equiregular and let $X_1,\ldots,X_n$ be an adapted bracket
frame on $O$. After shrinking $O$, each $\Phi_x^{-1}$ is a privileged
coordinate system centered at $x \in O$. Moreover, for every $K\Subset O$, the maps
$\{\Phi_x\}_{x\in K}$ are diffeomorphisms on a common neighborhood of
$0\in\R^n$ and depend smoothly on $(x,z)$.
\end{proposition}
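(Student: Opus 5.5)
We prove three things in turn: smoothness of the family $\Phi_x$, that it is a local diffeomorphism uniformly in $x$ near $K$, and that each $\Phi_x^{-1}$ is privileged at $x$.

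\emph{Smoothness and uniform invertibility.} Consider the map
\[
F(x,z):=\exp\!\Bigl(\sum_a z_aX_a\Bigr)(x).
\]
By smooth dependence of flows on initial data and parameters, $F$ is defined and smooth on an open neighborhood of $O\times\{0\}$ in $O\times\R^n$. At $z=0$ we have $\partial_{z_a}F(x,0)=X_a(x)$. Since $X_1,\ldots,X_n$ is a frame, $D_z\Phi_x(0)$ is invertible for every $x\in O$. Now apply the inverse function theorem with parameters to $(x,z)\mapsto(x,F(x,z))$, whose differential at $(x,0)$ is invertible. For $K\Subset O$ and a slightly larger compact set, continuity of $D_zF$ and a uniform bound on $D^2_zF$ give a radius $r>0$ such that each $\Phi_x$ with $x\in K$ is injective with invertible differential on the ball $|z|<r$. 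The quantitative inverse function theorem (contraction argument) then gives a common neighborhood $U=\{|z|<r\}$ on which every $\Phi_x$ is a diffeomorphism onto its image, with smooth dependence on $(x,z)$. The image of each $\Phi_x$ contains a uniform neighborhood of $x$.

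\emph{Privileged at $x$ — strategy.} Fix $x$ and set $z=\Phi_x^{-1}$. Each coordinate $z_a$ is a smooth function near $x$. We must show it has nonholonomic order exactly $w_a$ at $x$, i.e.
\begin{itemize}
\item[(i)] $X_{i_1}\cdots X_{i_m}z_a(x)=0$ for all horizontal words of length $m<w_a$;
\item[(ii)] some word of length $w_a$ gives a nonzero value.
\end{itemize}
Our approach is the standard one for canonical coordinates of the first kind (Bella\"iche, Jean, ABB Ch.~10). Work in the chart $z$, where $x$ corresponds to $0$ and $\Phi_x$ is the identity. Write each $X_a$ as a vector field on $\R^n$, and assign weight $w_a$ to $z_a$ and weight $-w_a$ to $\partial_{z_a}$. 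The defining property of first-kind coordinates is that for each $v\in\R^n$ the curve $t\mapsto tv$ is the integral curve of $\sum_a v_aX_a$ through $0$. This yields $\sum_a v_aX_a(tv)=\sum_a v_a\partial_{z_a}$ for all $v$ and $t$ (the Euler-type identity). Taylor-expanding this identity in $t$ forces the coefficients of $X_a$ to satisfy algebraic constraints at $0$. Combined with the bracket relations, these constraints show that each $X_a$ has weighted order $\ge -w_a$, with leading part $\partial_{z_a}$ plus terms in $\partial_{z_b}$ with $w_b\ge w_a$.

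\emph{Deducing (i) and (ii).} For (i), note that $X_i$ has order $\ge -1$ and $z_a$ is homogeneous of weight $w_a$. A word of length $m$ applied to $z_a$ then has weighted order $\ge w_a-m>0$, hence vanishes at $0$. For (ii), the adapted-frame condition (3) says $X_a\equiv[X_{i_1},[\ldots,X_{i_{w_a}}]]$ modulo $\D^{w_a-1}$ at $x$. Since the horizontal frame spans $\D^{w_a-1}$ only through lower-weight directions, $[X_{i_1},[\ldots]]z_a(x)$ equals $X_az_a(x)=1$ plus lower-weight contributions that vanish by (i). Expanding the bracket as a sum of words of length $w_a$, some word must be nonzero.

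\emph{Main obstacle.} The difficult step is proving the weighted order estimate for the frame fields in first-kind coordinates: the first-kind coordinates involve all $X_a$, including the nonhorizontal ones. The simplest route is to invoke the general theorem of Bella\"iche (or ABB Thm.~10.x) that first-kind canonical coordinates built from an adapted frame are privileged in the equiregular case. If that theorem is unavailable, we would argue instead through the equivalent criterion that $z_a(\gamma(t))=O(t^{w_a})$ along horizontal curves, together with the ball–box estimates.

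\emph{Shrinking $O$.} This is needed only so that the frame is defined on the whole region and the flows exist. It also ensures the order computation holds uniformly; equiregularity guarantees that the weights $w_a$ do not change with $x$.
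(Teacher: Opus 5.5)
Your proposal is correct and essentially matches the paper, which gives no argument beyond citing the standard theorem (Bella\"iche, Jean, ABB) that first-kind coordinates built from an adapted frame are privileged. Your parametric inverse-function argument correctly handles the uniform-diffeomorphism and smooth-dependence part, and for privilegedness you ultimately defer to that same theorem. If you want the Euler-identity sketch to stand on its own, ``combined with the bracket relations'' needs a concrete mechanism. One option is the linear ODE along rays satisfied by the dual coframe, driven by structure functions $c^e_{ac}$ that vanish unless $w_e\le w_a+w_c$; Picard iteration then propagates the weighted orders.
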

This is a standard construction; see
\cite{Bellaiche1996,Jean2014,ABB2020}. We henceforth fix an adapted bracket
frame on $O$ and use the resulting charts. Uniform assertions refer to
centers in a compact set $K\Subset O$.
For sufficiently small $r_0>0$, every horizontal curve of length less than
$r_0$ starting in $K$ remains in $O$. Thus the ambient distance and the
distance obtained by restricting curves to $O$ give the same small balls.
At $x\in O$, the nilpotentization is the graded Lie algebra
\begin{equation}\label{eq:nilpotent-algebra-general}
  \operatorname{gr}_x(\D)
  :=\bigoplus_{j=1}^s \D_x^j/\D_x^{j-1},
\end{equation}
whose bracket is induced by brackets of local sections of the flag. Let
$G_x$ be the corresponding Carnot group, endowed with the
left-invariant sub-Riemannian metric for which the classes of
$X_1(x),\ldots,X_k(x)$ are orthonormal in the first layer. Exponential
coordinates of the first kind give the identifications
\begin{equation}\label{eq:tangent-coordinate-identification}
  G_x\simeq\R^n\simeq
  \bigoplus_{j=1}^s\R^{n_j-n_{j-1}}.
\end{equation}
We denote the Carnot--Carath\'eodory distance on $G_x$ by $\widehat d_x$ and set
\begin{equation}\label{eq:tangent-ball-general}
  \widehat B_x(r):=\{\xi\in\R^n:\widehat d_x(0,\xi)<r\},
  \qquad \widehat B_x:=\widehat B_x(1).
\end{equation}
The anisotropic dilations are
\begin{equation}\label{eq:anisotropic-dilations}
  \delta_h\xi=(h^{w_1}\xi_1,\ldots,h^{w_n}\xi_n),
  \qquad h>0,
\end{equation}
with Jacobian $h^Q$, and
\begin{equation}\label{eq:tangent-distance-homogeneous}
  \widehat d_x(\delta_\lambda\xi,\delta_\lambda\eta)
  =\lambda\widehat d_x(\xi,\eta).
\end{equation}
For $h>0$, define the rescaled distance and rescaled unit ball by
\begin{align}
  d_{x,h}(\xi,\eta)
  &:=h^{-1}d\bigl(\Phi_x(\delta_h\xi),\Phi_x(\delta_h\eta)\bigr),
  \label{eq:rescaled-distance-general}\\
  B_{x,h}
  &:=\delta_{1/h}\bigl(\Phi_x^{-1}(B(x,h))\bigr)
    =\{\xi:d_{x,h}(0,\xi)<1\}.
  \label{eq:rescaled-ball-general}
\end{align}
On every bounded coordinate set, the nilpotent approximation theorem gives
\begin{equation}\label{eq:distance-convergence-pointwise}
  d_{x,h}\longrightarrow\widehat d_x
  \qquad\text{uniformly as }h\downarrow0.
\end{equation}
The convergence is uniform in $x$ on compact subsets of $O$. The rescaled
horizontal fields
\begin{equation}\label{eq:general-rescaled-fields}
  X_i^{x,h}:=h(\delta_{1/h})_*(\Phi_x^{-1})_*X_i,
  \qquad i=1,\ldots,k,
\end{equation}
converge in $C^\infty$ on compact sets to the model nilpotent fields. Write
$\widehat X_x=(\widehat X_{1,x},\ldots,\widehat X_{k,x})$ for this frame. We
use the following standard results; see
\cite{Mitchell1985,Bellaiche1996,Jean2014,DonMagnani2023}.
\begin{proposition}[Uniform ball-box estimate and tangent-ball trapping]
\label{prop:uniform-ball-control}
For every compact $K\Subset O$, there exist $C\ge1$ and $r_0>0$ such that
\begin{equation}\label{eq:uniform-ball-box}
  \delta_r([-C^{-1},C^{-1}]^n)
  \subset \Phi_x^{-1}(B(x,r))
  \subset \delta_r([-C,C]^n)
\end{equation}
for $x\in K$ and $0<r<r_0$. There is also a function
$\varepsilon_K(h)\downarrow0$ such that
\begin{equation}\label{eq:trapping-general}
  \widehat B_x(1-\varepsilon_K(h))
  \subset B_{x,h}
  \subset \widehat B_x(1+\varepsilon_K(h))
\end{equation}
for every $x\in K$ and all sufficiently small $h$.
\end{proposition}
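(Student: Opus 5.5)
We propose to deduce both inclusions from the uniform convergence of the rescaled distances \eqref{eq:distance-convergence-pointwise} together with a uniform comparison between the sub-Riemannian distance and the coordinate box structure. The argument has two parts: the ball-box estimate \eqref{eq:uniform-ball-box} at all small scales, and then the trapping \eqref{eq:trapping-general} as a consequence of uniform distance convergence.

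\textbf{Ball-box estimate.} We begin with \eqref{eq:uniform-ball-box}, proving each inclusion separately and uniformly in $x\in K$.

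For the outer inclusion, let $q=\Phi_x(z)$ with $d(x,q)<r$, and join $x$ to $q$ by a horizontal curve of length less than $r$. We use that $z_a$ has nonholonomic order $w_a$ at $x$ (Proposition~\ref{prop:first-kind-privileged}). Integrating along the curve, we obtain $|z_a(q)|\le C\,d(x,q)^{w_a}$. To make this uniform we will need that:
\begin{itemize}[nosep]
\item the horizontal derivatives of $z_a\circ\Phi_x^{-1}$ of length less than $w_a$ vanish at the center;
\item all derivatives are bounded uniformly, by smooth dependence on $(x,z)$.
\end{itemize}
A standard induction on the weight, as in Bella\"iche's estimate, then gives $|z_a|\le C\sum_{m\ge w_a}d^m\le C' d^{w_a}$ for $d<r_0$.

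For the inner inclusion, we realize each point of $\delta_r([-c,c]^n)$ by a concatenation of flows of horizontal fields of total length at most $r$. Here we use the adapted bracket frame: a bracket of length $j$ is approximated by commutator products of horizontal flows of times of order $r$. We then apply a uniform open-mapping/degree argument to the resulting endpoint map, which is smooth in $x$. This yields a uniform constant $c$.

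\textbf{Trapping.} For \eqref{eq:trapping-general}, the ball-box estimate already shows that $B_{x,h}\subset[-C,C]^n$ for $h<r_0$. So it suffices to control $d_{x,h}(0,\cdot)$ on the fixed compact cube $[-C',C']^n$, where $C'$ is chosen so large that it also contains $\widehat B_x(2)$ for all $x\in K$. This containment is possible by continuity in $x$ of the nilpotent structure.

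Set
\[
\varepsilon(h):=\sup_{x\in K}\sup_{|\xi|_\infty\le C'}\bigl|d_{x,h}(0,\xi)-\widehat d_x(0,\xi)\bigr|,
\]
which tends to $0$ by the uniform convergence \eqref{eq:distance-convergence-pointwise}. Then:
\begin{itemize}[nosep]
\item if $\widehat d_x(0,\xi)<1-\varepsilon(h)$, we get $d_{x,h}(0,\xi)<1$, so $\xi\in B_{x,h}$;
\item conversely, if $\xi\in B_{x,h}\subset[-C',C']^n$, then $\widehat d_x(0,\xi)<1+\varepsilon(h)$.
\end{itemize}
This gives \eqref{eq:trapping-general} with $\varepsilon_K:=\sup_{t\le h}\varepsilon(t)$, which makes it monotone.

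\textbf{Main obstacle.} The main difficulty is the uniformity in $x$ of the nilpotent approximation \eqref{eq:distance-convergence-pointwise} and of the ball-box constants. Pointwise versions are classical, so we will rely on the uniform versions in \cite{Jean2014,DonMagnani2023}. These hold because equiregularity makes the weights constant and the rescaled fields $X_i^{x,h}\to\widehat X_{i,x}$ converge in $C^\infty$ uniformly for $x\in K$. The uniform distance convergence then follows from a compactness argument on horizontal controls.
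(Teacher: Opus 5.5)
Your proposal is correct and follows essentially the same route as the paper: it invokes the compact-uniform ball-box theorem to place all $B_{x,h}$ in a common box, then uses the uniform-in-$x$ convergence of $d_{x,h}(0,\cdot)$ to $\widehat d_x(0,\cdot)$ on a larger box that also contains the tangent balls. Your sketch of the ball-box proof (Bella\"iche's order estimate plus an open-mapping argument) and the monotone envelope $\varepsilon_K(h)=\sup_{0<t\le h}\varepsilon(t)$ are harmless extra detail; the paper delegates the former to the literature and leaves the latter implicit.
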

\begin{proof}
The first assertion is the compact-uniform ball-box theorem. It places all
$B_{x,h}$, with $x\in K$ and $h$ small, in a common compact coordinate box.
On a larger box, \eqref{eq:distance-convergence-pointwise} is uniform in
$x$. Thus
$|d_{x,h}(0,\xi)-\widehat d_x(0,\xi)|\leq\varepsilon_K(h)$ there.
This gives \eqref{eq:trapping-general}.
\end{proof}
\begin{remark}
The proof of the ball-box theorem gives the estimate $\varepsilon_K(h)=O(h^{1/s})$. We
will prove in Proposition~\ref{prop:boundedness} that for $s\le2$ this improves
to $\varepsilon_K(h)=O(h)$.
\end{remark}
\subsection{First-moment criterion}
\label{subsec:first-moment-condition}
Let $\mu$ be a smooth positive measure on $M$. In the first-kind chart $\Phi_x^{-1}$ on $O \subset M$
centered at $x$, write
\begin{equation}\label{eq:J-density-general}
  \Phi_x^*\mu=J_x(z)\,\dd z,
  \qquad J_x(0)>0.
\end{equation}
The induced tangent measure on $G_x\simeq\R^n$ is
\begin{equation}\label{eq:tangent-measure-general}
  \widehat\mu_x:=J_x(0)\,\dd\xi.
\end{equation}
Set
\begin{equation}\label{eq:Vx}
  \widehat V_x:=\widehat\mu_x(\widehat B_x)
  =J_x(0)|\widehat B_x|,
\end{equation}
where $|\cdot|$ denotes Lebesgue measure. For $1\leq i,j\leq k$, set
\begin{equation}\label{eq:Mij-general}
  M_{ij}(x):=J_x(0)\int_{\widehat B_x}\xi_i\xi_j\,\dd\xi.
\end{equation}
The normalized covariance
\begin{equation}\label{eq:normalized-covariance}
  C_{ij}(x):=\frac{M_{ij}(x)}{\widehat V_x}
\end{equation}
is independent of the density factor $J_x(0)$. The next lemma shows that
$C$ is an intrinsic, continuous, positive-definite section of
$\operatorname{Sym}^2\D$.
\begin{lemma}[Intrinsic character of $\widehat V$ and $M$]
\label{lem:intrinsic-moments}
The scalar $\widehat V_x$ is independent of the adapted bracket frame used
to build $\Phi_x$, and the matrices $(M_{ij}(x))$ obtained from different
adapted frames are the components of a well-defined positive-definite section of
$\operatorname{Sym}^2\D$ over $O$. More precisely,
\begin{equation}\label{eq:Vx-intrinsic}
  \widehat V_x=\lim_{h\downarrow0}h^{-Q}\mu\bigl(B(x,h)\bigr),
\end{equation}
and $M$ transforms by orthogonal conjugation under an orthogonal change of
horizontal frame. Moreover, $x\mapsto\widehat V_x$ and, in every smooth
adapted frame, $x\mapsto M_{ij}(x)$ are continuous and locally bounded, and
$\widehat V$ is locally bounded away from zero.
\end{lemma}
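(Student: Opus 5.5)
The plan is to derive everything from the trapping estimate \eqref{eq:trapping-general} together with a change of variables. First we prove \eqref{eq:Vx-intrinsic}. Since $\Phi_x^*\mu=J_x\,\dd z$ and $\delta_h$ has Jacobian $h^Q$, we have
\[
  h^{-Q}\mu(B(x,h))=\int_{B_{x,h}}J_x(\delta_h\xi)\,\dd\xi .
\]
By Proposition~\ref{prop:uniform-ball-control}, the sets $B_{x,h}$ lie in a fixed compact box. On that box $J_x(\delta_h\xi)\to J_x(0)$ uniformly, because $J$ is smooth in $(x,z)$. The symmetric difference $B_{x,h}\triangle\widehat B_x$ is contained in the shell $\widehat B_x(1+\varepsilon)\setminus\widehat B_x(1-\varepsilon)$. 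By homogeneity its Lebesgue measure is $((1+\varepsilon)^Q-(1-\varepsilon)^Q)|\widehat B_x|\to0$. Hence the limit equals $J_x(0)|\widehat B_x|=\widehat V_x$. The left-hand side uses only $\mu$ and $d$, so $\widehat V_x$ does not depend on the adapted frame.

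For $M$, we apply the same argument to $h^{-Q-2}\int_{B(x,h)}\langle v,\cdot\rangle$-type quantities. This requires a coordinate-free definition of the horizontal coordinates. Fix $u,v\in\D_x$. The functions $\xi\mapsto\xi_H$ are the horizontal components of $\Phi_x^{-1}$, that is, the first-layer part of the graded identification. Rather than seek a fully intrinsic function, I will compare two adapted frames $X$ and $X'$ at $x$ directly. Their nilpotentizations are related by a graded Lie algebra isomorphism $\psi$ between the two identifications $\R^n\simeq\operatorname{gr}_x(\D)$. It is determined by $\psi|_{V_1}=O$, the orthogonal matrix relating $X_1(x),\dots,X_k(x)$ to $X'_1(x),\dots,X'_k(x)$, on the first layer, with higher layers given by the induced (possibly triangular-perturbed) linear maps. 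The key point is that the two tangent Carnot groups with unit balls $\widehat B_x,\widehat B'_x$ are both first-kind exponential coordinates on the same group $\operatorname{gr}_x(\D)$. Linear changes of graded basis therefore give $\widehat B'_x=L\widehat B_x$, where $L$ is linear and block upper-triangular with respect to the grading, with first block $O$.

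The densities also match: $J'_x(0)|\det L|=J_x(0)$. Both sides equal the density of $\mu_x$ transported to $\operatorname{gr}_x(\D)$, as one sees by comparing the differentials $d\Phi_x(0)$ and $d\Phi'_x(0)$ modulo the flag. Changing variables $\xi'=L\xi$ then gives $\xi'_H=O\xi_H$, since the first-layer block sees only horizontal components. This yields $M'=OMO^{\top}$. Consequently $M$ defines a section of $\operatorname{Sym}^2\D$, and it is positive definite because $\widehat B_x$ is a nonempty open set. The main obstacle is carefully justifying the identification of $\widehat B'_x$ with $L\widehat B_x$, including the check that the horizontal block of $L$ is exactly $O$. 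I would verify this using only the first-layer part of $d\Phi_x(0)$ and $d\Phi'_x(0)$, since higher-layer mixing does not affect $\xi_H$.

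Continuity comes next. Fix a compact $K$. The model fields $\widehat X_x$ depend continuously on $x$, since their structure constants are brackets of the smooth frame. Hence $\widehat d_x(0,\xi)\to\widehat d_{x_0}(0,\xi)$ uniformly on compact sets. A trapping argument as above then gives $\widehat B_{x_0}(1-\varepsilon)\subset\widehat B_x\subset\widehat B_{x_0}(1+\varepsilon)$ for $x$ near $x_0$. The shell estimate and the continuity of $J_x(0)$ then yield continuity of $\widehat V$ and of $M_{ij}$. Local boundedness follows from the common box in \eqref{eq:uniform-ball-box}. The lower bound follows because $\widehat B_x$ contains the fixed box $[-C^{-1},C^{-1}]^n$, by \eqref{eq:uniform-ball-box} and trapping in the limit, and $J_x(0)$ is bounded below on $K$.
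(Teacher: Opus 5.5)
Your treatment of $\widehat V_x$ and of positive definiteness matches the paper's. For the tensorial character of $M$ and for continuity you take a different route. It is sound in outline, but two steps need repair.

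\emph{The tensor $M$.} The paper never compares the tangent balls of two frames. The same change of variables gives
$\lim_{h\downarrow0}h^{-Q-2}\int_{B(x,h)}(f(q)-f(x))(g(q)-g(x))\,\dd\mu(q)=\sum_{i,j}M_{ij}(x)X_if(x)X_jg(x)$.
The left side is intrinsic, and the right side depends only on the arbitrary horizontal differentials $\dd f|_{\D_x},\dd g|_{\D_x}$. Hence $M$ is a well-defined element of $\operatorname{Sym}^2\D_x$, and $M'=R^\top MR$ holds whatever the non-horizontal vectors of the two frames are. This is shorter, and the same quadratic limit reappears in Proposition~\ref{prop:detailed-balance-convergence}. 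Your comparison instead gives an explicit law for the whole graded change of frame.

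However, the identification $\widehat B'_x=L\widehat B_x$ has to be made in $\operatorname{gr}_x(\D)$. The classes $[X_a(x)]$ and $[X'_a(x)]$ are two graded bases of the same graded Lie algebra, so $L$ is exactly block diagonal, $L=\operatorname{diag}(O,L_2,\ldots,L_s)$. Moreover, $G_x$ carries the same metric in both cases, because both horizontal bases are $g_x$-orthonormal. Since $\exp_{G_x}$ does not depend on the basis, $\widehat B'_x=L\widehat B_x$ and $\xi'_H=O\xi_H$. Drop the phrase ``block upper-triangular'': with weights increasing, such an $L$ would make $\xi'_H$ depend on higher-layer coordinates.

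Your proposed check through $d\Phi_x(0)$ and $d\Phi'_x(0)$ does not work. In $T_xM$ the change-of-frame matrix $P$ is only block upper-triangular. Its off-diagonal blocks do mix the horizontal coordinates with the higher ones as soon as some $X'_a(x)$ with $w_a\ge2$ has a horizontal component. Those blocks matter only for the density, where they drop out of $|\det P|=|\det L|^{-1}$. This gives exactly your relation $J'_x(0)|\det L|=J_x(0)$, which is correct.

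\emph{Continuity.} You pass from ``the model fields depend continuously on $x$'' to ``$\widehat d_x(0,\cdot)\to\widehat d_{x_0}(0,\cdot)$ uniformly on compact sets''. That step is a stability theorem for Carnot--Carath\'eodory distances under perturbation of the generating frame. It holds here, but you give no argument for it; compare the work its quantitative version requires in Appendix~\ref{app:uniform-lipschitz}.

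The paper bypasses this. Each map $(x,\xi)\mapsto d_{x,h}(0,\xi)$ is continuous, and \eqref{eq:distance-convergence-pointwise} is uniform for $x\in K$ and $\xi$ in a box $\mathcal C$. So the limit $(x,\xi)\mapsto\widehat d_x(0,\xi)$ is continuous, hence uniformly continuous, on $K\times\mathcal C$. This yields the trapping $\widehat B_{x_0}(1-\varepsilon)\subset\widehat B_x\subset\widehat B_{x_0}(1+\varepsilon)$ that you need.

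Your lower bound on $\widehat V$ is a valid alternative to the paper's ``positive and continuous'' argument, with one small correction. Trapping only places $[-C^{-1},C^{-1}]^n$ in the closed ball $\{\widehat d_x(0,\cdot)\le1\}$. This is harmless, because the unit sphere is Lebesgue-null by homogeneity.
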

\begin{proof}
Fix $x\in O$. For $f\in C^\infty(M)$, Taylor expansion in the first-kind
chart $\Phi_x^{-1}$ gives, uniformly for $\xi$ ranging over bounded subsets of $\mathbb{R}^n$,
\[
  \frac{f(\Phi_x(\delta_h\xi))-f(x)}h
  \longrightarrow \sum_{i=1}^k\xi_iX_if(x).
\]
The trapping estimate and homogeneity give
$|B_{x,h}\mathbin\triangle\widehat B_x|\to0$. After the change of variables
$z=\delta_h\xi$, it follows that
\[
  h^{-Q}\mu(B(x,h))
  =\int_{B_{x,h}}J_x(\delta_h\xi)\,\dd\xi
  \longrightarrow J_x(0)|\widehat B_x|=\widehat V_x,
\]
which proves \eqref{eq:Vx-intrinsic}. The same argument, applied to two
smooth functions $f,g$, yields
\begin{equation}\label{eq:M-intrinsic-limit}
\begin{split}
  &\lim_{h\downarrow0}h^{-Q-2}\int_{B(x,h)}
    (f(q)-f(x))(g(q)-g(x))\,\dd\mu(q)\\
  &\qquad=
    \sum_{i,j=1}^kM_{ij}(x)X_if(x)X_jg(x).
\end{split}
\end{equation}
The left-hand side is intrinsic, and the right-hand side depends only on
$\dd f|_{\D_x}$ and $\dd g|_{\D_x}$. Since these horizontal differentials
may be prescribed arbitrarily, \eqref{eq:M-intrinsic-limit} defines an
intrinsic element of $\operatorname{Sym}^2\D_x$. Thus, if
$X_a'=\sum_iR_{ia}X_i$ with $R\in\mathrm O(k)$, then
\[
  M'=R^\top MR.
\]
The tensor is positive definite because the integral of the square of any
nonzero horizontal linear functional over the open set $\widehat B_x$ is
strictly positive.
For continuity, fix $K\Subset O$ and choose a compact coordinate box
$\mathcal C$ containing $\widehat B_x(2)$ for every $x\in K$. Each map
$(x,\xi)\mapsto d_{x,h}(0,\xi)$ is continuous on $K \times \mathcal{C}$, and by
\eqref{eq:distance-convergence-pointwise} these maps converge uniformly to $(x,\xi)\mapsto\widehat d_x(0,\xi)$, which is
therefore continuous. Set
\[
  \varepsilon(x,y):=\sup_{\xi\in\mathcal C}
  |\widehat d_x(0,\xi)-\widehat d_y(0,\xi)|.
\]
Then $\varepsilon(x,y)\to0$ as $y\to x$, and for
$\varepsilon:=\varepsilon(x,y)<1$,
\[
  \widehat B_x(1-\varepsilon)\subset\widehat B_y
  \subset\widehat B_x(1+\varepsilon).
\]
Thus, by homogeneity,
\[
  |\widehat B_x\mathbin\triangle\widehat B_y|
  \le\bigl((1+\varepsilon)^Q-(1-\varepsilon)^Q\bigr)|\widehat B_x|
  \longrightarrow0.
\]
The ball-box estimate bounds $|\widehat B_x|$ uniformly on $K$. Hence the
integrals of $1$ and $\xi_i\xi_j$ over $\widehat B_x$ depend continuously on
$x$. Since $J_x(0)$ is smooth, the definitions of $\widehat{V}_x$ and $M_{ij}(x)$, namely \eqref{eq:Vx} and
\eqref{eq:Mij-general}, respectively, give the claimed continuity and local boundedness.
The positive continuous function $\widehat V$ is bounded away from zero on
compact subsets of $O$.
\end{proof}
\begin{proposition}[Tangent-ball moment convergence]
\label{prop:tangent-ball-convergence-general}
Let $K\Subset O$. For $x\in K$ and small $h$, the sets $B_{x,h}$ lie in a
common compact subset of $\R^n$. Moreover,
\begin{equation}\label{eq:indicator-convergence-general}
  \sup_{x\in K}|B_{x,h}\mathbin\triangle\widehat B_x|
  \longrightarrow0.
\end{equation}
Consequently, locally uniformly in $x\in O$,
\begin{align}
  \int_{B_{x,h}}\xi_i\xi_jJ_x(\delta_h\xi)\,\dd\xi
  &\longrightarrow M_{ij}(x),
  &&1\leq i,j\leq k,
  \label{eq:second-moment-convergence-general}\\
  \int_{B_{x,h}}J_x(\delta_h\xi)\,\dd\xi
  &\longrightarrow\widehat V_x.
  \label{eq:volume-convergence-general}
\end{align}
More generally, if $\Psi(x,\xi)$ is continuous on a neighborhood of a common
compact set containing the tangent and rescaled balls, then, for
$\theta\in\{0,1\}$,
\begin{equation}\label{eq:general-moment-convergence}
 \int_{B_{x,h}}\Psi(x,\xi)J_x(\delta_h\xi)^\theta\,\dd\xi
 \longrightarrow
 J_x(0)^\theta\int_{\widehat B_x}\Psi(x,\xi)\,\dd\xi .
\end{equation}
The convergence is locally uniform in $x$.
\end{proposition}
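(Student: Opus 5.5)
The plan is to reduce everything to the trapping estimate \eqref{eq:trapping-general} of Proposition~\ref{prop:uniform-ball-control}, combined with homogeneity of the tangent balls and uniform continuity of the integrands. The first claim comes straight from the uniform ball-box estimate \eqref{eq:uniform-ball-box}. Applying $\delta_{1/h}$ to the right inclusion gives $B_{x,h}\subset[-C,C]^n$ for $x\in K$ and $0<h<r_0$. For the tangent balls, I would use the continuity of $(x,\xi)\mapsto\widehat d_x(0,\xi)$ from the proof of Lemma~\ref{lem:intrinsic-moments}, or alternatively \eqref{eq:trapping-general} itself. Either way, $\widehat B_x\subset\widehat B_x(2)\subset\mathcal C$ for a fixed compact box $\mathcal C$, uniformly in $x\in K$. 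Let $\mathcal C$ denote a common compact set containing all of these balls.

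For \eqref{eq:indicator-convergence-general}, fix $\varepsilon=\varepsilon_K(h)<1$. By \eqref{eq:trapping-general}, both $B_{x,h}$ and $\widehat B_x$ lie between $\widehat B_x(1-\varepsilon)$ and $\widehat B_x(1+\varepsilon)$. It follows that
\[
  |B_{x,h}\mathbin\triangle\widehat B_x|
  \le|\widehat B_x(1+\varepsilon)\setminus\widehat B_x(1-\varepsilon)|
  =\bigl((1+\varepsilon)^Q-(1-\varepsilon)^Q\bigr)|\widehat B_x|,
\]
where the equality uses $\widehat B_x(r)=\delta_r\widehat B_x$ from \eqref{eq:tangent-distance-homogeneous} and the Jacobian $r^Q$. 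Since $|\widehat B_x|\le|\mathcal C|$ uniformly, the supremum over $K$ tends to $0$.

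For \eqref{eq:general-moment-convergence}, I would split the difference into two pieces:
\[
  \int_{B_{x,h}}\Psi\,J_x(\delta_h\xi)^\theta\,\dd\xi-J_x(0)^\theta\int_{\widehat B_x}\Psi\,\dd\xi
  =\int_{B_{x,h}}\Psi\bigl(J_x(\delta_h\xi)^\theta-J_x(0)^\theta\bigr)\dd\xi
  +J_x(0)^\theta\Bigl(\int_{B_{x,h}}-\int_{\widehat B_x}\Bigr)\Psi\,\dd\xi.
\]
The second term is bounded by $\sup_{K\times\mathcal C}|\Psi|\cdot\sup_K J_x(0)\cdot|B_{x,h}\mathbin\triangle\widehat B_x|$. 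Both suprema are finite: $\Psi$ is continuous on the compact set $K\times\mathcal C$, and $J$ is smooth in $(x,z)$ by Proposition~\ref{prop:first-kind-privileged}. So this term tends to zero uniformly by the previous step.

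For the first term, note that $\delta_h\xi\to0$ uniformly for $\xi\in\mathcal C$. Since $(x,z)\mapsto J_x(z)$ is uniformly continuous near $K\times\{0\}$, we get $\sup_{x\in K,\xi\in\mathcal C}|J_x(\delta_h\xi)-J_x(0)|\to0$. Hence the first term is bounded by $|\mathcal C|\sup|\Psi|$ times a quantity tending to zero. The cases \eqref{eq:second-moment-convergence-general} and \eqref{eq:volume-convergence-general} follow by taking $\theta=1$ with $\Psi=\xi_i\xi_j$ and $\Psi=1$, together with the definitions \eqref{eq:Vx} and \eqref{eq:Mij-general}. Local uniformity in $x\in O$ follows because every point has a compact neighborhood $K$.

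The only delicate point is uniformity in $x$, and it rests on two things. First, the trapping function $\varepsilon_K$ is uniform on $K$, which is exactly what Proposition~\ref{prop:uniform-ball-control} provides. Second, the tangent volumes $|\widehat B_x|$ need a uniform bound; I would get this from the ball-box estimate passed to the limit, or from inclusion in $\mathcal C$. No new estimate is needed beyond these.
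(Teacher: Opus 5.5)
Your proposal is correct and takes essentially the same approach as the paper. You use the uniform ball-box estimate for common compact containment, the trapping estimate \eqref{eq:trapping-general} together with homogeneity for the symmetric-difference bound, and uniform convergence $J_x(\delta_h\xi)\to J_x(0)$ plus continuity of $\Psi$ for the weighted moments; your two-term split simply writes out what the paper's brief proof leaves implicit.
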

\begin{proof}
Common compact containment follows from the uniform ball-box estimate \eqref{eq:uniform-ball-box}. By
\eqref{eq:trapping-general} and homogeneity,
\[
  \sup_{x\in K}|B_{x,h}\mathbin\triangle\widehat B_x|
  \le \sup_{x\in K}|\widehat B_x|
  \bigl((1+\varepsilon_K(h))^Q-(1-\varepsilon_K(h))^Q\bigr)
  \longrightarrow0.
\]
The supremum is finite by the uniform ball-box estimate. On the common
compact set, $J_x(\delta_h\xi)\to J_x(0)$ uniformly. The same
symmetric-difference estimate and uniform continuity prove
\eqref{eq:general-moment-convergence}; the two special cases \eqref{eq:second-moment-convergence-general} and \eqref{eq:volume-convergence-general}
follow immediately.
\end{proof}
In first-kind exponential coordinates, inversion in $G_x$ is
$\xi\mapsto-\xi$. Symmetry and left invariance of $\widehat d_x$ therefore give
$\widehat d_x(0,\xi)=\widehat d_x(0,-\xi)$, so
$\widehat B_x=-\widehat B_x$ and
\begin{equation}\label{eq:tangent-first-moments-zero}
  \int_{\widehat B_x}\xi_a\,\dd\xi=0,
  \qquad a=1,\ldots,n.
\end{equation}
The Jacobian of $\delta_h$ gives
\begin{equation}\label{eq:Dh-definition-general}
  D_h(x):=\int_{B_{x,h}}J_x(\delta_h\xi)\,\dd\xi
  =h^{-Q}\mu(B(x,h)).
\end{equation}
With $A_h$ as in \eqref{eq:AMV-def}, we have therefore
\begin{equation}\label{eq:rescaled-operator-general}
  A_hf(x)=\frac1{h^2D_h(x)}
  \int_{B_{x,h}}
  \bigl(f(\Phi_x(\delta_h\xi))-f(x)\bigr)
  J_x(\delta_h\xi)\,\dd\xi.
\end{equation}
\begin{definition}[First-moment condition]
\label{def:first-moment-condition}
The first-kind coordinates at $x$ satisfy the \emph{first-moment condition}
with respect to $\mu$ if the limits
\begin{equation}\label{eq:Gamma-def-general}
  \Gamma_i(x):=\lim_{h\downarrow0}\frac1h
  \int_{B_{x,h}}\xi_iJ_x(\delta_h\xi)\,\dd\xi,
  \qquad i=1,\ldots,k,
\end{equation}
exist. We call the $\Gamma_i(x)$ the first-moment coefficients in the
chosen chart.
\end{definition}
Fix $x\in O$ and set
\begin{equation}\label{eq:beta-def-general}
  \beta_j(x):=(\partial_{z_j}\log J_x)(0),
  \qquad j=1,\ldots,k.
\end{equation}
Whenever they exist, denote the unweighted horizontal first moments by
\begin{equation}\label{eq:unweighted-first-moment-condition}
  \Lambda_i(x):=\lim_{h\downarrow0}\frac1h
  \int_{B_{x,h}}\xi_i\,\dd\xi.
\end{equation}
\begin{lemma}[First moments and the measure density]
\label{lem:unweighted-first-moments}
The first-moment condition holds at $x$ if and only if the limits
$\Lambda_i(x)$ exist for $i=1,\ldots,k$. In that case
\begin{equation}\label{eq:weighted-unweighted-first-moment-relation}
  \Gamma_i(x)=J_x(0)\Lambda_i(x)
  +\sum_{j=1}^kM_{ij}(x)\beta_j(x).
\end{equation}
\end{lemma}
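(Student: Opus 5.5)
The plan is to Taylor expand the density $J_x$ at the origin inside the weighted first moment and compare it with the unweighted one. Since $J_x$ is smooth near $0$ and $\delta_h\xi=(h^{w_a}\xi_a)_a$, for $\xi$ in the common compact set containing all $B_{x,h}$ from Proposition~\ref{prop:tangent-ball-convergence-general} we write
\[
  J_x(\delta_h\xi)=J_x(0)+h\sum_{j=1}^k\partial_{z_j}J_x(0)\,\xi_j+R_h(\xi),
  \qquad \sup_{\xi}|R_h(\xi)|\le Ch^2 .
\]
Here the coordinates of weight $\ge2$ enter the first-order term only with a factor $h^{w_a}\le h^2$, and the quadratic Taylor remainder is $O(|\delta_h\xi|^2)=O(h^2)$. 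Multiplying by $\xi_i/h$ and integrating over $B_{x,h}$ then gives
\[
  \frac1h\int_{B_{x,h}}\xi_iJ_x(\delta_h\xi)\,\dd\xi
  =J_x(0)\,\frac1h\int_{B_{x,h}}\xi_i\,\dd\xi
  +\sum_{j=1}^k\partial_{z_j}J_x(0)\int_{B_{x,h}}\xi_i\xi_j\,\dd\xi+O(h).
\]
The $O(h)$ bound holds because $\xi_i$ is bounded on the common compact set and the measures $|B_{x,h}|$ are uniformly bounded.

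Next, we identify the limit of the middle term. By \eqref{eq:general-moment-convergence} with $\theta=0$ and $\Psi=\xi_i\xi_j$, the integral $\int_{B_{x,h}}\xi_i\xi_j\,\dd\xi$ converges to $\int_{\widehat B_x}\xi_i\xi_j\,\dd\xi=M_{ij}(x)/J_x(0)$. Since $\partial_{z_j}J_x(0)=J_x(0)\beta_j(x)$, the middle term converges to $\sum_jM_{ij}(x)\beta_j(x)$. The difference between the weighted quantity in \eqref{eq:Gamma-def-general} and $J_x(0)$ times the unweighted quantity in \eqref{eq:unweighted-first-moment-relation} therefore has a limit. Because $J_x(0)>0$, one limit exists if and only if the other does. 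Passing to the limit yields \eqref{eq:weighted-unweighted-first-moment-relation}.

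The only point requiring care is the uniformity of the Taylor remainder. This is secured by the common compact containment of the rescaled balls, which follows from the ball-box estimate \eqref{eq:uniform-ball-box}: for $h\le1$ it puts $\delta_h\xi$ in a fixed neighborhood of $0$ on which $J_x$ has bounded second derivatives. Beyond that, the argument is routine bookkeeping.
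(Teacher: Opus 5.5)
Your proof is correct and is essentially the paper's own argument. You expand $J_x(\delta_h\xi)$ to first order on the common compact box, where only the horizontal coordinates contribute at order $h$; you multiply by $\xi_i/h$ and integrate; and you identify the quadratic term via Proposition~\ref{prop:tangent-ball-convergence-general} with $\Psi=\xi_i\xi_j$ and $\theta=0$, exactly as the paper does. One cosmetic point: the unweighted limit is displayed in \eqref{eq:unweighted-first-moment-condition}, so the equation label you cite should be that one.
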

\begin{proof}
On the common compact coordinate box,
\begin{equation}\label{eq:J-expansion-general}
  J_x(\delta_h\xi)
  =J_x(0)\left(
    1+h\sum_{j=1}^k\beta_j(x)\xi_j+O(h^2)
  \right).
\end{equation}
Only horizontal variables occur at order $h$; all other coordinates have
weight at least two. Multiplying by $\xi_i$, integrating, and dividing by
$h$ gives
\begin{align*}
  \frac1h\int_{B_{x,h}}\xi_iJ_x(\delta_h\xi)\,\dd\xi
  &=J_x(0)\frac1h\int_{B_{x,h}}\xi_i\,\dd\xi\\
  &\quad+J_x(0)\sum_{j=1}^k\beta_j(x)
    \int_{B_{x,h}}\xi_i\xi_j\,\dd\xi+o(1).
\end{align*}
Proposition~\ref{prop:tangent-ball-convergence-general}, applied with
$\Psi(x,\xi)=\xi_i\xi_j$ and $\theta=0$, shows that the quadratic integral
converges to $J_x(0)^{-1}M_{ij}(x)$. The resulting asymptotic identity proves both
the equivalence and the formula.
\end{proof}
Fix $x\in O$. A \emph{horizontal coordinate test family} at $x$ consists
of functions $f_1,\ldots,f_k\in C_c^\infty(M)$ such that
\begin{equation}\label{eq:coordinate-test-functions}
  f_i(\Phi_x(\xi))=\xi_i
\end{equation}
near $\xi=0$.
\begin{theorem}[First-moment criterion]
\label{thm:general-first-moment}
Fix $x\in O$ and a first-kind privileged chart $\Phi_x$. The following are
equivalent.
\begin{enumerate}[label=\textup{(\roman*)}, leftmargin=2em]
\item The unweighted limits $\Lambda_i(x)$ exist for $i=1,\ldots,k$.
\item The first-moment condition holds at $x$.
\item The limit $\lim_{h\downarrow0}A_hf(x)$ exists for every
  $f\in C_c^\infty(M)$.
\item The quantities $A_hf_i(x)$ converge for some horizontal coordinate
  test family.
\end{enumerate}
When these conditions hold,
\begin{equation}\label{eq:general-limit-convergence}
  A_hf(x)\longrightarrow\mathcal A_\mu f(x)
\end{equation}
for every $f\in C_c^\infty(M)$, where
\begin{equation}\label{eq:general-limit-operator}
  \mathcal A_\mu f(x)
  =\frac1{\widehat V_x}\left(
    \frac12\sum_{i,j=1}^kM_{ij}(x)X_iX_jf(x)
    +\sum_{i=1}^k\Gamma_i(x)X_if(x)
  \right).
\end{equation}
\end{theorem}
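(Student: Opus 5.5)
The plan is to expand the integrand of \eqref{eq:rescaled-operator-general} to second weighted order, so that every term except the horizontal first moment converges by Proposition~\ref{prop:tangent-ball-convergence-general}. The equivalences then follow by testing on coordinate functions.

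The equivalence (i)$\Leftrightarrow$(ii) is exactly Lemma~\ref{lem:unweighted-first-moments}, so it remains to link (ii) with (iii) and (iv).

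\textbf{Expansion.} Fix $x$ and $f\in C_c^\infty(M)$, and set $F:=f\circ\Phi_x$, which is smooth near $0$. Since $\Phi_x$ is the time-one flow of $\sum_a z_aX_a$, Taylor's formula along the flow gives
\[
F(z)-F(0)=\sum_a z_aX_af(x)+\tfrac12\sum_{a,b}z_az_bX_aX_bf(x)+O(|z|^3).
\]
Substitute $z=\delta_h\xi$ with $\xi$ in the common compact set from Proposition~\ref{prop:tangent-ball-convergence-general}. Keeping only terms of weighted order at most two gives, uniformly in $\xi$,
\[
F(\delta_h\xi)-F(0)=h\sum_{i\le k}\xi_iX_if(x)+h^2\sum_{w_a=2}\xi_aX_af(x)+\tfrac{h^2}{2}\sum_{i,j\le k}\xi_i\xi_jX_iX_jf(x)+O(h^3).
\]
Multiply by $J_x(\delta_h\xi)$, integrate over $B_{x,h}$, and divide by $h^2D_h(x)$. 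We have $D_h(x)\to\widehat V_x>0$ by \eqref{eq:volume-convergence-general}. The quadratic term converges to $\frac12\sum M_{ij}X_iX_jf(x)$ by \eqref{eq:second-moment-convergence-general}. The weight-two linear term converges to $J_x(0)\int_{\widehat B_x}\xi_a\,\dd\xi=0$ by \eqref{eq:general-moment-convergence} and \eqref{eq:tangent-first-moments-zero}. The remainder is $O(h)$. This leaves the identity
\[
A_hf(x)=\frac1{D_h(x)}\sum_{i=1}^k X_if(x)\,\frac1h\int_{B_{x,h}}\xi_iJ_x(\delta_h\xi)\,\dd\xi+\frac{1}{\widehat V_x}\cdot\frac12\sum_{i,j}M_{ij}X_iX_jf(x)+o(1).
\]

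\textbf{(ii)$\Rightarrow$(iii).} Under (ii), the first-moment factors converge to $\Gamma_i(x)$, and the identity above yields \eqref{eq:general-limit-operator}. \textbf{(iii)$\Rightarrow$(iv)} is immediate, since a horizontal coordinate test family exists: cut off the coordinate functions $\xi_i\circ\Phi_x^{-1}$.

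\textbf{(iv)$\Rightarrow$(ii).} Here I must compute $X_jf_i(x)$ and $X_jX_lf_i(x)$ for the coordinate functions. Because $\Phi_x$ is the first-kind flow chart, $F_i(z)=z_i$ near $0$, so the Taylor expansion of $F_i$ is exactly $z_i$. Redoing the expansion directly for $F_i$ gives
\[
F_i(\delta_h\xi)-F_i(0)=h\xi_i,
\]
with no quadratic or remainder term. Hence
\[
A_hf_i(x)=\frac{1}{D_h(x)}\cdot\frac1h\int_{B_{x,h}}\xi_iJ_x(\delta_h\xi)\,\dd\xi.
\]
Since $D_h(x)\to\widehat V_x>0$, convergence of $A_hf_i(x)$ is equivalent to existence of $\Gamma_i(x)$. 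This also confirms that in the general formula the coefficients satisfy $X_jf_i(x)=\delta_{ij}$ and the symmetric second derivatives vanish. The one point that needs care is that the quadratic term of $f_i$ vanishes; it does, because $f_i\circ\Phi_x$ is linear near $0$.

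The main obstacle is to justify the uniform Taylor expansion in the first-kind chart. Concretely, the first- and second-order coefficients of $F$ must be $X_af(x)$ and the symmetrized $\frac12(X_aX_b+X_bX_a)f(x)$. The symmetrized form is harmless against the symmetric weights $\xi_i\xi_j$, so $\sum M_{ij}X_iX_jf$ is unaffected. For the remainder, I use that $\delta_h\xi=O(h)$ uniformly on the common compact set, so the $O(|z|^3)$ term, and all cross terms of weight at least three, are $O(h^3)$.
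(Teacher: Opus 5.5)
Your proposal is correct and follows essentially the same route as the paper: (i)$\Leftrightarrow$(ii) from Lemma~\ref{lem:unweighted-first-moments}, the weighted-order-two Taylor expansion of the time-one flow combined with tangent-ball moment convergence and central symmetry for (ii)$\Rightarrow$(iii), and the exact identity $A_hf_i(x)=(hD_h(x))^{-1}\int_{B_{x,h}}\xi_iJ_x(\delta_h\xi)\,\dd\xi$ for (iv)$\Rightarrow$(ii). You also note that (iii)$\Rightarrow$(iv) needs a horizontal coordinate test family to exist, which you obtain by cutting off the coordinate functions; the paper leaves this small point implicit.
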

\begin{proof}
Lemma~\ref{lem:unweighted-first-moments} proves
\textup{(i)}$\Leftrightarrow$\textup{(ii)}. Assume \textup{(ii)}, fix a
compact box containing $B_{x,h}$ for all sufficiently small $h$, and for any $\xi \in B_{x,h}$ set
\begin{equation}\label{eq:general-Taylor-vector-field}
  Y_{h,\xi}:=\sum_{a=1}^n h^{w_a}\xi_aX_a.
\end{equation}
By the definition of $\Phi_x$,
$\Phi_x(\delta_h\xi)=\exp(Y_{h,\xi})(x)$. Taylor's formula for the time-one
flow gives, uniformly on this box,
\begin{equation}\label{eq:Taylor-general}
\begin{split}
  f(\Phi_x(\delta_h\xi))-f(x)
  &=h\sum_{i=1}^k\xi_iX_if(x)
    +h^2\sum_{w_a=2}\xi_aX_af(x)\\
  &\quad+\frac{h^2}{2}\sum_{i,j=1}^k
    \xi_i\xi_jX_iX_jf(x)+O(h^3).
\end{split}
\end{equation}
Indeed, $Y_{h,\xi}=O(h)$ in every fixed $C^m$ norm, and the displayed terms are
the components of weighted degree at most two in
$Y_{h,\xi}f+\frac12Y_{h,\xi}^2f$. Substitution into
\eqref{eq:rescaled-operator-general} gives
\begin{equation}\label{eq:expanded-general}
\begin{split}
  A_hf(x)=\frac1{D_h(x)}\Bigg[&
    \sum_{i=1}^kX_if(x)\frac1h
      \int_{B_{x,h}}\xi_iJ_x(\delta_h\xi)\,\dd\xi\\
  &+\sum_{w_a=2}X_af(x)
      \int_{B_{x,h}}\xi_aJ_x(\delta_h\xi)\,\dd\xi\\
  &+\frac12\sum_{i,j=1}^kX_iX_jf(x)
      \int_{B_{x,h}}\xi_i\xi_jJ_x(\delta_h\xi)\,\dd\xi
  \Bigg]+O(h).
\end{split}
\end{equation}
The first line converges by hypothesis, and the third one by \eqref{eq:second-moment-convergence-general} in
Proposition~\ref{prop:tangent-ball-convergence-general}. If $w_a=2$,
\eqref{eq:general-moment-convergence} in
Proposition~\ref{prop:tangent-ball-convergence-general}, with $\Psi(x,\xi)=\xi_a$ and
$\theta=1$, shows that the corresponding integral in the second line
converges to $J_x(0)\int_{\widehat B_x}\xi_a\,\dd\xi$ which is equal to $0$ by
\eqref{eq:tangent-first-moments-zero}. Since
$D_h(x)\to\widehat V_x>0$, formula
\eqref{eq:general-limit-operator} follows. Thus
\textup{(ii)}$\Rightarrow$\textup{(iii)}.
Implication \textup{(iii)}$\Rightarrow$\textup{(iv)} is obvious. Conversely, assume
\textup{(iv)}. For all sufficiently small $h$, the defining identity for $f_i$ holds on
$B(x,h)$, so
\begin{equation}\label{eq:coordinate-test-exact-identity}
  A_hf_i(x)=\frac1{hD_h(x)}
  \int_{B_{x,h}}\xi_iJ_x(\delta_h\xi)\,\dd\xi.
\end{equation}
Since $D_h(x)\to\widehat V_x>0$, convergence of the left-hand side is equivalent
to the existence of the limit $\Gamma_i(x)$. Thus
\textup{(iv)}$\Rightarrow$\textup{(ii)}, completing the equivalence.
\end{proof}
\begin{remark}
\label{rem:chart-independence}
Condition \textup{(iii)} of Theorem~\ref{thm:general-first-moment} refers to
no chart. Consequently, the first-moment condition holds at $x$ in one
first-kind privileged chart arising from a local adapted bracket frame if
and only if it holds in every such chart, and the limiting operator
$\mathcal A_\mu$ is independent of the chart used to compute it.
\end{remark}
Combining Theorem~\ref{thm:general-first-moment} with Lemma
\ref{lem:unweighted-first-moments} separates the metric and measure data: considering the normalized covariance $C_{ij}(x)=M_{ij}(x)/\widehat V_x$, we obtain
\begin{equation}\label{eq:general-limit-normalized}
\begin{split}
  \mathcal A_\mu f(x)
  &=\frac12\sum_{i,j=1}^kC_{ij}(x)X_iX_jf(x)\\
  &\quad+\sum_{i=1}^k\left(
    \frac{\Lambda_i(x)}{|\widehat B_x|}
    +\sum_{j=1}^kC_{ij}(x)\beta_j(x)
  \right)X_if(x).
\end{split}
\end{equation}
Thus the existence criterion and the quantities $C_{ij},\Lambda_i$ depend
only on the metric and the chosen chart; the smooth measure enters the
operator through the logarithmic density derivatives $\beta_j$.
Although $\Lambda_i$, $\Gamma_i$, and $\beta_i$ are chart-dependent, the
tensor $(C_{ij})$ and, whenever the first moments converge, the full
operator in \eqref{eq:general-limit-normalized} are intrinsic.
\begin{corollary}[Smooth changes of measure]
\label{cor:smooth-measure-change}
Let $\mu_W=e^W\mu$ with $W\in C^\infty(M)$. Pointwise convergence on all
smooth test functions with respect to $\mu_W$ holds at $x$ if and only if
it holds with respect to $\mu$. When the two limiting operators exist,
\begin{equation}\label{eq:smooth-measure-change}
  \mathcal A_{\mu_W}f(x)
  =\mathcal A_\mu f(x)
  +\sum_{i,j=1}^kC_{ij}(x)(X_jW)(x)X_if(x).
\end{equation}
\end{corollary}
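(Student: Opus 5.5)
The plan is to apply Lemma~\ref{lem:unweighted-first-moments} and formula \eqref{eq:general-limit-normalized} to the two measures $\mu$ and $\mu_W$ in the same first-kind chart $\Phi_x$. Then compare the ingredients that depend on the measure.

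\textbf{Equivalence.} By Theorem~\ref{thm:general-first-moment}, pointwise convergence on all smooth test functions with respect to a given measure is equivalent to condition (i). Condition (i) asks for the existence of the unweighted limits $\Lambda_i(x)$. These are defined through the sets $B_{x,h}$ and Lebesgue measure only, so they do not involve the measure at all. Hence condition (i) is the same statement for $\mu$ and for $\mu_W$, and the equivalence follows at once.

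\textbf{Drift formula.} In the chart $\Phi_x$ the density of $\mu_W$ is
\[
  J^W_x(z)=e^{W(\Phi_x(z))}J_x(z),
\]
so $J^W_x(0)=e^{W(x)}J_x(0)>0$. Taking logarithms and differentiating at $0$ gives
\[
  \beta^W_j(x)=\beta_j(x)+\partial_{z_j}\bigl(W\circ\Phi_x\bigr)(0).
\]
For a horizontal index $j\le k$, the curve $t\mapsto\Phi_x(te_j)=\exp(tX_j)(x)$ is the integral curve of $X_j$ through $x$. Therefore $\partial_{z_j}(W\circ\Phi_x)(0)=(X_jW)(x)$.

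Next I compare the remaining ingredients of \eqref{eq:general-limit-normalized}:
\begin{itemize}
  \item the normalized covariance $C_{ij}(x)$ is independent of the density factor $J_x(0)$, by the remark after \eqref{eq:normalized-covariance};
  \item the Lebesgue volume $|\widehat B_x|$ and the limits $\Lambda_i(x)$ depend only on the metric and the chart;
  \item the second-order part $\tfrac12\sum C_{ij}X_iX_jf$ is therefore identical for the two measures.
\end{itemize}

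Subtracting the two instances of \eqref{eq:general-limit-normalized}, only the terms containing $\beta_j$ differ, and they differ by $\sum_{i,j}C_{ij}(x)(X_jW)(x)X_if(x)$. This is \eqref{eq:smooth-measure-change}.

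\textbf{Main point to check.} The only step needing care is that the operator in \eqref{eq:general-limit-normalized} is computed in the same chart for both measures. Since the chart is fixed and the frame $X_1,\dots,X_n$ is common to both, this is automatic. Remark~\ref{rem:chart-independence} then confirms that the resulting identity is intrinsic.
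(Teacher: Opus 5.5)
Your proposal is correct and matches the paper's proof: the measure-independence of the unweighted moments $\Lambda_i$ gives the equivalence through Theorem~\ref{thm:general-first-moment}. The chart density $e^{W\circ\Phi_x}J_x$ shifts each $\beta_j$ by $X_jW(x)$ for $j\le k$, and substituting this into \eqref{eq:general-limit-normalized} gives \eqref{eq:smooth-measure-change}.
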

\begin{proof}
The metric balls and the unweighted moments $\Lambda_i$ do not depend on
the measure. Lemma~\ref{lem:unweighted-first-moments} and Theorem
\ref{thm:general-first-moment} therefore give the equivalence. In the
chosen chart, the density of $\mu_W$ is $e^{W\circ\Phi_x}J_x$, so its
horizontal logarithmic derivatives are $\beta_j+X_jW$; indeed
$\partial_{z_j}(W\circ\Phi_x)(0)
=\frac{\dd}{\dd t}\big|_{t=0}W\bigl(\exp(tX_j)(x)\bigr)=X_jW(x)$ for
$j\le k$. Substitution in \eqref{eq:general-limit-normalized} proves
\eqref{eq:smooth-measure-change}.
\end{proof}
\subsection{Lie groups}
\label{subsec:Lie-groups}
The first-moment criterion applies directly to left-invariant
structures. Let $G$ be a connected Lie group with a left-invariant
bracket-generating distribution, a left-invariant metric, and a left Haar
measure $\mu$. Choose an adapted basis $X_1,\ldots,X_n$ of $\g$, with
$X_1,\ldots,X_k$ orthonormal in the horizontal layer, and use the same
symbols for the corresponding left-invariant vector fields. Set
\begin{equation}\label{eq:Lie-exp-chart}
  E_x(z):=x\exp_G\!\left(\sum_{a=1}^n z_aX_a\right).
\end{equation}
Normalize $\mu$ so that its density in exponential coordinates equals one
at the origin. If $\widehat B$ is the unit tangent ball, set
\begin{equation}\label{eq:Lie-Mij}
  \widehat V=|\widehat B|,
  \qquad
  M_{ij}=\int_{\widehat B}\xi_i\xi_j\,\dd\xi.
\end{equation}
\begin{theorem}[Left-invariant Lie-group case]
\label{thm:Lie-formula}
For every $f\in C_c^\infty(G)$, locally uniformly in $x$,
\begin{equation}\label{eq:Lie-limit-formula}
  A_hf(x)\longrightarrow\mathcal A_Gf(x),
\end{equation}
where
\begin{equation}\label{eq:Lie-limit-operator}
  \mathcal A_Gf(x)
  =\frac1{\widehat V}\left(
    \frac12\sum_{i,j=1}^kM_{ij}X_iX_jf(x)
    -\frac12\sum_{i,j=1}^k
      M_{ij}\Tr(\ad_{X_j})X_if(x)
  \right).
\end{equation}
In particular, the drift vanishes when $G$ is unimodular.
\end{theorem}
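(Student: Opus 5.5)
The plan is to apply the first-moment criterion, Theorem~\ref{thm:general-first-moment}, in the left-invariant charts $E_x$. By left invariance of $d$, $\mu$ and the frame, $E_x=L_x\circ E_e$. Hence $B_{x,h}=B_{e,h}$ and $J_x=J_e=:J$ are independent of $x$, and $A_hf(x)=A_h(f\circ L_x)(e)$. It therefore suffices to work at $e$; the local uniformity in $x$ follows because the Taylor remainder in \eqref{eq:Taylor-general} is controlled by $C^3$ norms of $f$ along left-invariant fields, which are bounded uniformly for $x$ in a compact set. I also need $E_e$ to be a first-kind privileged chart. This holds because $\exp_G(\sum z_aX_a)=\exp(\sum z_aX_a)(e)$ is the time-one flow of the left-invariant field, so $E_e=\Phi_e$ for the left-invariant adapted frame.

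\textbf{Central symmetry.} I claim that $B_{e,h}=-B_{e,h}$ exactly. Since $\exp_G(-Y)=\exp_G(Y)^{-1}$ and $d(e,q^{-1})=d(q,e)=d(e,q)$ by left invariance, the ball $B(e,h)$ is inversion-invariant. In the coordinates $z$, inversion is $z\mapsto -z$, and $\delta_{1/h}$ commutes with negation. Consequently $\int_{B_{e,h}}\xi_i\,d\xi=0$ for every $h$, so $\Lambda_i=0$ exists. By Lemma~\ref{lem:unweighted-first-moments}, the first-moment condition holds with $\Gamma_i=\sum_j M_{ij}\beta_j$, where $J(0)=1$ by the normalization.

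\textbf{The density derivative $\beta_j=\partial_{z_j}\log J(0)$.} This is the computational core. The pullback of left Haar measure by $\exp_G$ has density $\det\bigl(\frac{1-e^{-\ad_Z}}{\ad_Z}\bigr)$ at $Z=\sum z_aX_a$ (the standard differential of the exponential map, written in left trivialization). Expanding, $\frac{1-e^{-\ad_Z}}{\ad_Z}=I-\tfrac12\ad_Z+O(|Z|^2)$, so
\[
J(z)=1-\tfrac12\Tr(\ad_Z)+O(|z|^2),
\]
which gives $\beta_j=-\tfrac12\Tr(\ad_{X_j})$. I would double-check the sign convention: the left-trivialized derivative of $\exp$ is $\frac{1-e^{-\ad_Z}}{\ad_Z}$, which is the correct one for a left-invariant measure. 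Substituting into \eqref{eq:general-limit-operator} then yields \eqref{eq:Lie-limit-operator}, with $\widehat V=|\widehat B|$ and $M_{ij}$ as in \eqref{eq:Lie-Mij}. If $G$ is unimodular, then $\Tr\ad_X=0$ for all $X$ and the drift vanishes.

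\textbf{Main obstacle.} I expect the main difficulty to be the uniformity statement rather than the limit itself. The Taylor expansion in the proof of Theorem~\ref{thm:general-first-moment} was pointwise; here one must observe that all integrals in \eqref{eq:expanded-general} are independent of $x$ and that the remainder is uniform over compact sets of $x$. This requires only compactness of the support-relevant region and continuity of the derivatives $X_IX_JX_Lf$. The convergences of $D_h$, of the second moments, and of the weight-two first moments are then plain numerical limits, so uniformity is automatic.
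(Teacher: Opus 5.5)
Your proposal is correct and takes essentially the same route as the paper: left invariance makes $B_{x,h}$ and $J_x$ independent of $x$, and central symmetry of $B_{e,h}$ kills the unweighted first moments. The expansion $\frac{1-e^{-\ad_Z}}{\ad_Z}=I-\frac12\ad_Z+O(|Z|^2)$ then gives $\beta_j=-\frac12\Tr(\ad_{X_j})$, and the first-moment criterion yields the formula, with local uniformity coming from the $x$-independence of all the integrals. You pass through Lemma~\ref{lem:unweighted-first-moments} where the paper expands $J(\delta_h\xi)$ directly, but this is the same computation. The one detail the paper states explicitly and you leave implicit is shrinking the exponential chart to a domain invariant under $z\mapsto -z$. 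That is what makes inversion read $z\mapsto -z$ in coordinates, and it gives $B_{e,h}=-B_{e,h}$ for all sufficiently small $h$, not for every $h$.
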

\begin{proof}
The left-invariant fields associated with the adapted basis form a global
adapted bracket frame, and \eqref{eq:Lie-exp-chart} is precisely its
first-kind chart. Left invariance gives
\[
  B(x,h)=xB(e,h),\qquad |B(x,h)|=|B(e,h)|.
\]
Let $Z_z=\sum_a z_aX_a$ and set
\begin{equation}\label{eq:Lie-rescaled-ball}
  B_h:=\delta_{1/h}\bigl(E_e^{-1}(B(e,h))\bigr).
\end{equation}
The rescaled ball in the chart $E_x$ is the same set $B_h$ for every $x$.
After shrinking the exponential neighborhood, if necessary, to make it
invariant under $z\mapsto-z$, distance symmetry and left invariance give
\[
  d(e,\exp_G(-Z_z))
  =d(\exp_G(Z_z),e)
  =d(e,\exp_G(Z_z)).
\]
Thus $B_h=-B_h$ for all sufficiently small $h$, and
\begin{equation}\label{eq:Lie-centered-balls}
  \int_{B_h}\xi_a\,\dd\xi=0,
  \qquad a=1,\ldots,n.
\end{equation}
In exponential coordinates, left Haar measure has density
\[
  J(z)=\left|\det\left(
    \frac{1-e^{-\ad_{Z_z}}}{\ad_{Z_z}}
  \right)\right|.
\]
The determinant is positive near the origin, and
\[
  \frac{I-e^{-A}}A=I-\frac12A+O(A^2).
\]
It follows that
\[
  \beta_j=(\partial_{z_j}\log J)(0)
  =-\frac12\Tr(\ad_{X_j}).
\]
On a fixed compact set containing all $B_h$,
\[
  J(\delta_h\xi)
  =
  1-\frac h2\sum_{j=1}^k
    \Tr(\ad_{X_j})\xi_j+O(h^2).
\]
Together with \eqref{eq:Lie-centered-balls} and tangent-ball moment
convergence, this gives
\[
\begin{split}
  \frac1h\int_{B_h}\xi_iJ(\delta_h\xi)\,\dd\xi
  &=-\frac12\sum_{j=1}^k\Tr(\ad_{X_j})
    \int_{B_h}\xi_i\xi_j\,\dd\xi+O(h)\\
  &\longrightarrow
  -\frac12\sum_{j=1}^kM_{ij}\Tr(\ad_{X_j}).
\end{split}
\]
Theorem~\ref{thm:general-first-moment} now gives
\eqref{eq:Lie-limit-operator}. The expansion above, as well as the Taylor
expansion in the proof of that theorem, is uniform for $x$ in compact sets;
hence the convergence is locally uniform. The sign of the drift reflects
the use of left-invariant fields and left Haar measure.
\end{proof}
The corresponding formula for a smoothly weighted Haar measure follows
immediately.
\begin{corollary}[Smoothly weighted Haar measure]
\label{cor:Lie-weighted-formula}
Let $W\in C^\infty(G)$ and $\mu_W=e^W\mu$, where $\mu$ is left Haar
measure, and write $A_h^{\mu_W}$ for \eqref{eq:AMV-def} formed with
$\mu_W$. Then, for every $f\in C_c^\infty(G)$, locally uniformly in $x$,
\begin{equation}\label{eq:Lie-weighted-limit}
\begin{split}
  A_h^{\mu_W}f(x)\longrightarrow\frac1{\widehat V}\Bigg[&
    \frac12\sum_{i,j=1}^kM_{ij}X_iX_jf(x)\\
  &+\sum_{i,j=1}^kM_{ij}
    \left(X_jW(x)-\frac12\Tr(\ad_{X_j})\right)X_if(x)
  \Bigg].
\end{split}
\end{equation}
\end{corollary}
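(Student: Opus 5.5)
The plan is to reduce Corollary~\ref{cor:Lie-weighted-formula} to Theorem~\ref{thm:Lie-formula} via the change-of-measure formula, Corollary~\ref{cor:smooth-measure-change}, and then to recheck uniformity in $x$ directly, since that corollary is stated only pointwise.

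First, pointwise convergence. By Theorem~\ref{thm:Lie-formula}, the limit for left Haar measure $\mu$ exists at every $x$ and equals $\mathcal A_G f(x)$. Hence Corollary~\ref{cor:smooth-measure-change} applies: pointwise convergence with respect to $\mu_W$ holds at every $x$, and
\[
  \mathcal A_{\mu_W}f(x)=\mathcal A_Gf(x)+\sum_{i,j}C_{ij}(X_jW)(x)X_if(x),
  \qquad C_{ij}=M_{ij}/\widehat V .
\]
The tensor $C_{ij}$ is computed from the left-invariant frame, so it is the constant $M_{ij}/\widehat V$ of \eqref{eq:Lie-Mij}. Substituting \eqref{eq:Lie-limit-operator} and collecting terms gives \eqref{eq:Lie-weighted-limit} exactly.

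Second, local uniformity, which needs a direct argument. In the chart $E_x$, the density of $\mu_W$ is
\[
  J^W_x(z)=e^{W(E_x(z))}J(z),
\]
with $J$ independent of $x$. The rescaled balls are the same set $B_h$ for every $x$, and $B_h$ is centrally symmetric. On a fixed compact set containing all $B_h$, Taylor expansion gives
\[
  J^W_x(\delta_h\xi)=e^{W(x)}\Bigl(1+h\sum_{j\le k}\bigl(X_jW(x)-\tfrac12\Tr(\ad_{X_j})\bigr)\xi_j+O(h^2)\Bigr),
\]
with $O(h^2)$ uniform for $x$ in compact sets, because $W$ is smooth and $E_x$ depends smoothly on $(x,z)$.

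Multiplying by $\xi_i$ and using $\int_{B_h}\xi_i=0$ and $\int_{B_h}\xi_i\xi_j\to M_{ij}$ (these are independent of $x$) shows that $h^{-1}\int_{B_h}\xi_iJ^W_x(\delta_h\xi)\,\dd\xi$ converges uniformly in $x$. The same expansion gives $D_h(x)\to e^{W(x)}\widehat V$ uniformly, and this limit is bounded away from zero on compacts.

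Finally, insert these into \eqref{eq:expanded-general}. The Taylor remainder in \eqref{eq:Taylor-general} is uniform in $x$ for $f\in C_c^\infty$. The weight-two moment terms equal $e^{W(x)}\int_{B_h}\xi_a(1+O(h))\,\dd\xi=O(h)$ uniformly, again by symmetry of $B_h$. The factor $e^{W(x)}$ cancels between numerator and denominator, and this yields \eqref{eq:Lie-weighted-limit} locally uniformly.

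The main obstacle is only this uniformity bookkeeping, that is, making sure every error term is uniform on compact sets of centers. Left invariance makes it mild, because the ball $B_h$ does not depend on $x$.
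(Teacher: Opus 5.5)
Your proposal is correct and follows essentially the same route as the paper: apply Corollary~\ref{cor:smooth-measure-change} to Theorem~\ref{thm:Lie-formula}, using that $C_{ij}=M_{ij}/\widehat V$ is constant in the left-invariant frame, and get local uniformity from uniform density and Taylor expansions on compact sets. The only difference is that you spell out the uniformity step, via the $x$-independent symmetric sets $B_h$ and the expansion of $e^{W\circ E_x}J$, which the paper states in a single line.
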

\begin{proof}
Apply Corollary~\ref{cor:smooth-measure-change} to
Theorem~\ref{thm:Lie-formula}. Here
$C_{ij}=M_{ij}/\widehat V$ is constant in the left-invariant frame, so
\eqref{eq:smooth-measure-change} gives exactly
\eqref{eq:Lie-weighted-limit}. Local uniformity follows from the uniform
density and Taylor expansions on compact subsets of $G$.
\end{proof}

\begin{remark}[Local ball reversal]
The preceding argument is not specific to Lie groups. Its essential ingredient is the existence, at each point \(x\), of a first-kind privileged chart \(E_x\) satisfying the local ball-reversal property
\[
d\bigl(x,E_x(-z)\bigr)=d\bigl(x,E_x(z)\bigr)
\]
for \(z\) sufficiently small. The coordinate images of small balls are then centrally symmetric, so their unweighted first moments vanish identically. Theorem~\ref{thm:general-first-moment} therefore applies, and the dependence on a smooth positive measure is again determined by its coordinate density.
The same mechanism applies to locally symmetric sub-Riemannian structures and, more generally, to equiregular sub-Riemannian homogeneous spaces. Indeed, if \(M=G/H\), \(o=eH\), and \(G\) acts by sub-Riemannian isometries, then suitable exponential coordinates satisfy
\[
d\bigl(o,\exp_G(-Z)\cdot o\bigr)
=d\bigl(\exp_G(Z)\cdot o,o\bigr)
=d\bigl(o,\exp_G(Z)\cdot o\bigr).
\]
Translation by the \(G\)-action gives the corresponding reversal at every point. Thus the pointwise convergence results above extend to this setting, with the limiting operator obtained from the same moment and density expansions. We do not pursue the resulting formulas here.
\end{remark}

\subsection{Distributional convergence}
\label{subsec:detailed-balance}
On an adapted-frame patch set
\[
  M^\sharp(df):=\sum_{i,j=1}^kM_{ij}(X_jf)X_i.
\]
By Lemma~\ref{lem:intrinsic-moments}, these local expressions agree on
overlaps and, for each smooth $f$, define a global continuous horizontal
vector field.
We identify locally integrable functions with distributions using $\mu$.
For a locally integrable vector field $Y$, our convention is
\[
  \langle\operatorname{div}_\mu Y,\phi\rangle
  :=-\int_M Y\phi\,\dd\mu,
  \qquad \phi\in C_c^\infty(M).
\]
Multiplication by $D_h$ removes the center-dependent volume normalization
in $A_h$ and reveals an exact symmetric form.
\begin{proposition}[Distributional convergence]
\label{prop:detailed-balance-convergence}
For $f,g\in C_c^\infty(M)$, the identity
\begin{equation}\label{eq:finite-detailed-balance}
  \int_M fA_hg\,D_h\,\dd\mu
  =-\frac1{2h^{Q+2}}
  \iint_{\{d(x,q)<h\}}
  (f(q)-f(x))(g(q)-g(x))
  \,\dd\mu(q)\dd\mu(x)
\end{equation}
holds for all  $h>0$ small enough\footnote{Note that our general assumptions do not a priori imply $\mu(B(x,h))<+\infty$ for every $h>0$.}. Moreover, for every
$f\in C_c^\infty(M)$,
\begin{equation}\label{eq:detailed-balance-convergence}
  D_hA_hf
  \longrightarrow
  \frac12\operatorname{div}_{\mu}\bigl(M^\sharp(df)\bigr)
\end{equation}
in distributions.
\end{proposition}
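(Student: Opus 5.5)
The identity \eqref{eq:finite-detailed-balance} comes from symmetrizing a double integral. Then \eqref{eq:detailed-balance-convergence} follows by passing to the limit in its right-hand side using the second-moment formula \eqref{eq:M-intrinsic-limit}, made uniform via Proposition~\ref{prop:tangent-ball-convergence-general}.

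\emph{Step 1: the identity.} By \eqref{eq:Dh-definition-general}, $D_h(x)=h^{-Q}\mu(B(x,h))$. Hence
\[
  D_h(x)A_hg(x)=h^{-Q-2}\int_{B(x,h)}(g(q)-g(x))\,\dd\mu(q).
\]
Multiply by $f(x)$ and integrate in $x$. The integrand is supported in $\{x\in\operatorname{supp}f\}$, and for small $h$ the set $\{(x,q):d(x,q)<h,\ x\in\operatorname{supp}f\}$ is relatively compact; this is where ``$h$ small'' is used. So Fubini applies, and we get
\[
  \int fA_hg\,D_h\,\dd\mu=h^{-Q-2}\iint_{\{d<h\}}f(x)(g(q)-g(x))\,\dd\mu\,\dd\mu.
\]
The domain $\{d(x,q)<h\}$ and the product measure are invariant under $(x,q)\mapsto(q,x)$. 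Swapping variables gives the same quantity with integrand $-f(q)(g(q)-g(x))$. Averaging the two expressions yields $-\tfrac12(f(q)-f(x))(g(q)-g(x))$. All integrals are finite because the integrands are bounded and compactly supported.

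\emph{Step 2: the limit.} Fix a test function $\phi\in C_c^\infty(M)$ and apply the identity with $f\to\phi$, $g\to f$. Then
\[
  \langle D_hA_hf,\phi\rangle=-\frac12\int_M E_h(x)\,\dd\mu(x),
  \qquad
  E_h(x):=h^{-Q-2}\int_{B(x,h)}(\phi(q)-\phi(x))(f(q)-f(x))\,\dd\mu(q).
\]
By \eqref{eq:M-intrinsic-limit}, $E_h(x)\to\sum_{i,j}M_{ij}X_i\phi\,X_jf=M^\sharp(df)\phi$ pointwise.

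To integrate the limit, I would show that the convergence is locally uniform and that $E_h$ has uniformly compact support.
\begin{itemize}
\item \emph{Support.} $E_h(x)=0$ unless $B(x,h)$ meets $\operatorname{supp}\phi\cap\operatorname{supp}f$, which forces $x$ into a fixed compact set for $h\le1$.
\item \emph{Uniform limit.} Cover that compact set by finitely many adapted-frame patches. In each patch, change variables $q=\Phi_x(\delta_h\xi)$ to write $E_h(x)=\int_{B_{x,h}}h^{-1}(\phi\circ\Phi_x\circ\delta_h-\phi(x))\,h^{-1}(\cdots)\,J_x(\delta_h\xi)\,\dd\xi$. By the Taylor expansion \eqref{eq:Taylor-general}, uniform for $x$ in compacts, each difference quotient equals $\sum_i\xi_iX_i(\cdot)(x)+O(h)$. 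Proposition~\ref{prop:tangent-ball-convergence-general} with $\Psi=\xi_i\xi_j$, $\theta=1$ then gives uniform convergence to $\sum M_{ij}X_i\phi X_jf$.
\end{itemize}
Dominated convergence now gives
\[
  \langle D_hA_hf,\phi\rangle\to-\frac12\int_M M^\sharp(df)\phi\,\dd\mu=\Big\langle\frac12\operatorname{div}_\mu M^\sharp(df),\phi\Big\rangle.
\]

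\emph{Main obstacle.} The delicate point is justifying uniformity of the second-moment convergence in $x$, including the point where the chart $\Phi_x$ varies with $x$. Locally this is exactly \eqref{eq:general-moment-convergence}, which is stated to be locally uniform, and the $O(h)$ Taylor remainder is uniform by Proposition~\ref{prop:first-kind-privileged}. The only global issue is patching, and since the integrand is intrinsic, a finite cover suffices. No first-moment information is needed: in the symmetric form only products of first-order differences appear, so the $1/h$ obstruction from the first moments never enters.
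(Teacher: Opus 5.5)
Your proof is correct and follows the paper's argument essentially step by step. You symmetrize the double integral by swapping $x$ and $q$, then localize on finitely many adapted-frame patches (the paper uses a partition of unity where you use compact subsets of the patches, which is equivalent), change variables by $q=\Phi_x(\delta_h\xi)$, expand the product of difference quotients with the uniform Taylor formula \eqref{eq:Taylor-general}, and conclude with Proposition~\ref{prop:tangent-ball-convergence-general} for $\Psi=\xi_i\xi_j$, $\theta=1$, together with dominated convergence.

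One small slip: $E_h(x)$ need not vanish when $B(x,h)$ misses $\operatorname{supp}\phi\cap\operatorname{supp}f$. For instance, for $x\in\operatorname{supp}\phi\setminus\operatorname{supp}f$ you get $E_h(x)=-\phi(x)h^{-Q-2}\int_{B(x,h)}f\,\dd\mu$. However, $E_h$ is still supported in the $h$-neighborhood of $\operatorname{supp}\phi\cup\operatorname{supp}f$, which lies in a fixed compact set for small $h$, so the rest of the argument is unaffected.
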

\begin{proof}
Fix $f,g\in C_c^\infty(M)$. By \eqref{eq:AMV-def} and
\eqref{eq:Dh-definition-general},
\[
  \int_M fA_hg\,D_h\,\dd\mu
  =\frac1{h^{Q+2}}\iint_{\{d(x,q)<h\}}f(x)\bigl(g(q)-g(x)\bigr)
  \,\dd\mu(q)\dd\mu(x).
\]
Exchanging $x$ and $q$ and averaging gives
\eqref{eq:finite-detailed-balance}. We then prove the distributional convergence. Choose a relatively compact open set
$U\supset\operatorname{supp}f\cup\operatorname{supp}g$ and $h_0>0$ such
that
$d(\operatorname{supp}f\cup\operatorname{supp}g,M\setminus U)>h_0$.
Put $L:=\overline U$. Cover $L$ by finitely many open sets
$O_1,\ldots,O_m$ carrying adapted bracket frames as in
Subsection~\ref{subsec:general-definitions}, and choose
$\chi_l\in C_c^\infty(O_l)$ with $\sum_l\chi_l=1$ on a neighborhood of
$L$. After decreasing $h_0$, the change of variables
$q=\Phi_x(\delta_h\xi)$ is valid for
$x\in\operatorname{supp}\chi_l$ and $0<h<h_0$. Write
$\Delta_hf(x,\xi):=f(\Phi_x(\delta_h\xi))-f(x)$ and
$\Delta_hg$ analogously. The contribution of the $l$th patch to
\eqref{eq:finite-detailed-balance} is
\[
  -\frac12\int_M\chi_l(x)
  \left[\int_{B_{x,h}}
  \frac{\Delta_hf(x,\xi)}h\,\frac{\Delta_hg(x,\xi)}h
  \,J_x(\delta_h\xi)\,\dd\xi\right]\dd\mu(x),
\]
where $\dd\mu(q)=h^QJ_x(\delta_h\xi)\dd\xi$. By
\eqref{eq:Taylor-general},
\[
  \frac{\Delta_hf(x,\xi)}h\frac{\Delta_hg(x,\xi)}h
  =\sum_{i,j=1}^k\xi_i\xi_jX_if(x)X_jg(x)+O(h),
\]
uniformly for $x\in\operatorname{supp}\chi_l$ and $\xi$ in the common
compact box $\mathcal C_l$ furnished by
Proposition~\ref{prop:tangent-ball-convergence-general}. Uniform compact
containment and boundedness of $J_x(\delta_h\xi)$ make the integrated
remainder $O(h)$. The same proposition, applied to the displayed
quadratic term with $\theta=1$, therefore gives, uniformly for
$x\in\operatorname{supp}\chi_l$,
\[
  \int_{B_{x,h}}
  \frac{\Delta_hf}{h}\frac{\Delta_hg}{h}
  J_x(\delta_h\xi)\,\dd\xi
  \longrightarrow
  \sum_{i,j=1}^kM_{ij}(x)X_if(x)X_jg(x).
\]
Integrating against $\chi_l$ and summing over $l$ yields
\[
   \int_M fA_hg\,D_h\,\dd\mu
  =\int_M gA_hf\,D_h\,\dd\mu
  \longrightarrow \left\langle\frac12\operatorname{div}_\mu
    \bigl(M^\sharp(df)\bigr),g\right\rangle.
\]
\end{proof}
We call a family $(T_h)$ \emph{locally bounded on test functions} if, for
every $f\in C_c^\infty(M)$ and $K\Subset M$, there are $h_{K,f}>0$ and
$C_{K,f}<\infty$ such that
\[
  \sup_{x\in K,\ 0<h<h_{K,f}}|T_hf(x)|\le C_{K,f}.
\]
\begin{corollary}[Identity for a pointwise limit]
\label{cor:pointwise-detailed-balance}
Suppose that $(A_h)$ is locally bounded on test functions and that
$A_hf\to\mathcal A_\mu f$ pointwise for every
$f\in C_c^\infty(M)$. Then
\begin{equation}\label{eq:pointwise-detailed-balance}
  \widehat V\,\mathcal A_\mu f
  =\frac12\operatorname{div}_{\mu}\bigl(M^\sharp(df)\bigr)
\end{equation}
in distributions.
\end{corollary}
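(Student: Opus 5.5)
The plan is to combine the distributional convergence of Proposition~\ref{prop:detailed-balance-convergence} with dominated convergence applied to the product $D_hA_hf$. Fix $f\in C_c^\infty(M)$ and a test function $\phi\in C_c^\infty(M)$, and put $K:=\operatorname{supp}\phi$. By the identification of functions with distributions through $\mu$, the pairing is
\[
  \langle D_hA_hf,\phi\rangle=\int_M D_h(x)\,A_hf(x)\,\phi(x)\,\dd\mu(x).
\]
As $h\downarrow0$ this converges to $\bigl\langle\frac12\operatorname{div}_\mu(M^\sharp(df)),\phi\bigr\rangle$ by \eqref{eq:detailed-balance-convergence}. Thus \eqref{eq:pointwise-detailed-balance} will follow once we show that the same integral also converges to $\int_M\widehat V\,\mathcal A_\mu f\,\phi\,\dd\mu$.

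For this second limit, observe that $D_h(x)=h^{-Q}\mu(B(x,h))$ converges to $\widehat V_x$ for every $x$, by \eqref{eq:Vx-intrinsic}. The convergence is in fact locally uniform, by \eqref{eq:volume-convergence-general} applied on finitely many adapted-frame patches covering $K$. Hence $\sup_{x\in K,\,h<h_1}D_h(x)<\infty$ for some $h_1>0$. By hypothesis $A_hf(x)\to\mathcal A_\mu f(x)$ pointwise, and local boundedness on test functions gives $\sup_{x\in K,\,0<h<h_{K,f}}|A_hf(x)|\le C_{K,f}$. The integrand $D_hA_hf\,\phi$ therefore converges pointwise to $\widehat V\,\mathcal A_\mu f\,\phi$. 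It is dominated by the constant $C\,\|\phi\|_\infty\mathbf 1_K$, which is $\mu$-integrable because $K$ is compact and $\mu$ is a smooth (hence Radon) measure. Dominated convergence then yields the desired limit.

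Measurability of $\mathcal A_\mu f$ is automatic, since it is a pointwise limit of the functions $A_hf$. Each $A_hf$ is measurable in $x$: the map $(x,q)\mapsto\mathbf 1_{\{d(x,q)<h\}}$ is Borel because $d$ is continuous, so Tonelli applies. The limit $\mathcal A_\mu f$ is also locally bounded by $C_{K,f}$, so $\widehat V\,\mathcal A_\mu f$ is locally integrable and defines a distribution. By uniqueness of distributional limits, the two expressions coincide, which is \eqref{eq:pointwise-detailed-balance}.

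The only slightly delicate point is the uniform bound on $D_h$ over $K$. I would obtain it from the patchwise uniform convergence in Proposition~\ref{prop:tangent-ball-convergence-general}, together with local boundedness of $\widehat V$ from Lemma~\ref{lem:intrinsic-moments}. Alternatively, it follows directly from the upper ball-box inclusion in \eqref{eq:uniform-ball-box}, together with local boundedness of the densities $J_x$. Everything else is a routine application of dominated convergence.
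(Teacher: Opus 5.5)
Your proposal is correct and follows the paper's proof. You apply dominated convergence, using the local bound on $A_hf$ (inherited by $\mathcal A_\mu f$) and the locally uniform convergence $D_h\to\widehat V$, to identify the limit of $\int_M\phi\,D_hA_hf\,\dd\mu$ as $\int_M\phi\,\widehat V\mathcal A_\mu f\,\dd\mu$, and then compare it with the distributional limit in Proposition~\ref{prop:detailed-balance-convergence}; your remarks on measurability and local integrability only make explicit details the paper leaves implicit.
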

\begin{proof}
Let $\phi\in C_c^\infty(M)$. The pointwise limit
$\mathcal A_\mu f$ inherits the local bound on $A_hf$. Since
$D_h\to\widehat V$ locally uniformly, dominated convergence gives
\[
  \int_M\phi D_hA_hf\,\dd\mu
  \longrightarrow
  \int_M\phi\widehat V\mathcal A_\mu f\,\dd\mu.
\]
The conclusion follows by comparing this limit with
\eqref{eq:detailed-balance-convergence}.
\end{proof}
\section{The case of step at most two}
\label{sec:step-two}
Throughout this section, $(M,\D,g)$ is smooth and equiregular of step
$s\le2$, and $\mu$ is a smooth positive measure; the Riemannian case
$\D=TM$ is included.  The step assumption
enters through the fact that every nilpotentization is a Carnot group of
step at most two.  Consequently, the tangent frames are two-generating,
uniformly on compact parameter sets, and the centered localization and
stability estimates of Lemmas~\ref{lem:uniform-confinement} and
\ref{lem:app-distance-stability} apply.
On a fixed first-kind privileged-coordinate patch, set
\[
  m_i(x,h):=\int_{B_{x,h}}\xi_i\,\dd\xi,
  \qquad 1\le i\le k.
\]
\begin{theorem}[Pointwise AMV limits in step at most two]\label{thm:main}
Let $(M,\D,g)$ be a smooth equiregular sub-Riemannian manifold of step
$s\le2$, endowed with a smooth positive measure $\mu$.  For every $x\in M$,
every system of first-kind privileged coordinates associated with a local
adapted bracket frame, and every $1\le i\le k$, the finite limit
\begin{equation}\label{eq:Lambda-def}
  \Lambda_i(x):=\lim_{h\downarrow0}\frac{m_i(x,h)}h
\end{equation}
exists.  Consequently, for every $f\in C_c^\infty(M)$,
\begin{equation}\label{eq:main-limit}
  A_hf(x)\longrightarrow\mathcal A_\mu f(x),
\end{equation}
where $\mathcal A_\mu$ is given by
\eqref{eq:general-limit-operator} and
\begin{equation}\label{eq:Gamma-formula}
  \Gamma_i(x)=J_x(0)\Lambda_i(x)
  +\sum_{j=1}^kM_{ij}(x)\beta_j(x).
\end{equation}
\end{theorem}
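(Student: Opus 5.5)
By Theorem~\ref{thm:general-first-moment} and Lemma~\ref{lem:unweighted-first-moments}, it suffices to show that the limit $\Lambda_i(x)$ exists for each $1\le i\le k$. Formulas \eqref{eq:main-limit} and \eqref{eq:Gamma-formula} then follow directly from \eqref{eq:general-limit-operator} and \eqref{eq:weighted-unweighted-first-moment-relation}. The plan therefore addresses only the convergence of $m_i(x,h)/h$, at a fixed base point $x$ and in a fixed first-kind chart. It has three steps: an a priori bound, a reduction to a polynomial model, and a tame-geometry argument excluding oscillation.

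\textbf{A priori bound.} First I would prove the improved trapping
\[
  \widehat B_x(1-Ch)\subset B_{x,h}\subset\widehat B_x(1+Ch).
\]
The rescaled fields satisfy $X_i^{x,h}=\widehat X_{i,x}+hR_{i,h}$ with $R_{i,h}$ bounded in $C^1$ on compact sets; this uses step $\le2$, since weights are only $1,2$ and the first correction is of weighted order $+1$. I would then compare horizontal controls for $X^{x,h}$ and $\widehat X_x$. A Gronwall estimate shows that endpoints of trajectories driven by the same control differ by $O(h)$. In step two the endpoint map of the Carnot model is open with quantitative inverse, uniformly on compacts, via a two-generating argument (the centered localization and stability lemmas already cited). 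Hence an $O(h)$ endpoint error can be corrected with $O(h)$ extra length, which gives $|d_{x,h}(0,\xi)-\widehat d_x(0,\xi)|\le Ch$ on a fixed box. Combined with $\widehat B_x=-\widehat B_x$ and \eqref{eq:tangent-first-moments-zero}, this already yields $m_i(x,h)=O(h)$. The quotient $m_i(x,h)/h$ is therefore bounded, and only oscillation must be excluded.

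\textbf{Finite-jet reduction.} Next I would replace the frame $X_1,\ldots,X_k$ in the chart $\Phi_x^{-1}$ by its weighted Taylor polynomial of sufficiently high weighted degree $N$. This gives polynomial, hence real-analytic, vector fields $\widetilde X_i$ near $0$. They still bracket-generate in step two at $0$ with the same nilpotentization. Applying the same stability estimate with error $O(h^{N})$ in place of $O(h)$, the rescaled distances of the true and truncated structures differ by $O(h^2)$ on the box. Consequently the balls differ by a set of measure $O(h^2)$, and the first moments agree up to $o(h)$. It therefore suffices to prove convergence of $\widetilde m_i(h)/h$ for the analytic comparison structure. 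There the distance $(h,\xi)\mapsto \widetilde d_{h}(0,\xi)$ should be globally subanalytic on $(0,h_0)\times\text{box}$. This follows because it is the sub-Riemannian distance of an analytic, step-two, hence fat-free (no abnormal minimizers issue beyond what the cited tame results handle, as in \cite{LeDonneLerarioNalonPaddeuRizzi2026}) structure, parametrized analytically by $h$.

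\textbf{Tame integration and o-minimality.} Given subanalyticity, the family $\{(h,\xi):\widetilde d_h(0,\xi)<1\}$ is definable in $\R_{\mathrm{an}}$. By the theorem on parametric integration of subanalytic functions (Lion--Rolin, Comte--Lion--Rolin), the function $h\mapsto\widetilde m_i(h)$ is constructible: a finite sum of products of subanalytic functions and logarithms of subanalytic functions. Constructible functions of one variable have asymptotic expansions in powers $h^{r}(\log h)^{\ell}$ as $h\downarrow0$, and in particular they are eventually monotone. Therefore $\widetilde m_i(h)/h$ has a limit in $[-\infty,\infty]$, and the $O(1)$ bound from the first step forces that limit to be finite.

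The main obstacle is the subanalyticity claim for the analytic comparison distance uniformly in the parameter $h$, including at small $h$. I would first try to derive it from analyticity of the step-two endpoint map together with the absence of nontrivial singular minimizers in the relevant compact range, after possibly restricting to $h\in(0,h_0)$ and invoking the cited recent work.
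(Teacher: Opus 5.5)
Your architecture is exactly the paper's: linear trapping giving $m_i(x,h)=O(h)$, a weighted-jet reduction to a polynomial frame, then tame integration and o-minimal monotonicity. However, two links are not established, and one of them is the step you yourself flag. For subanalyticity, you treat $h\mapsto\widetilde d_h$ as an analytically parametrized family of structures and worry about uniformity as $h\downarrow0$. The missing idea is that there is no genuine family: the rescaled model distance is an exact dilation of a single distance, $d_{Y,h}(0,\xi)=h^{-1}d_Y(0,\delta_h\xi)$. Hence
\[
  \{(h,\xi):d_{Y,h}(0,\xi)<1\}
  =\{(h,\xi):(\delta_h\xi,h^2)\in\mathcal E_0\},
\]
where $\mathcal E_0$ is the strict epigraph of $z\mapsto d_Y(0,z)^2$ over a compact box around the origin. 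All you need is subanalyticity of that one function near $0$ for the single real-analytic two-generating structure $Y$ (Agrachev--Sarychev). A compact subanalytic graph is globally subanalytic, and pulling back by the polynomial map $(h,\xi)\mapsto(\delta_h\xi,h^2)$ finishes the argument (Lemma~\ref{lem:model-family-subanalytic}). The relevant input is the absence of \emph{strictly} abnormal minimizers, which the Goh condition forces in step two. It is not the absence of singular minimizers: the $\R$-direction lines in $\R\times\mathbb H^1$ (Remark~\ref{rem:non-scalar-covariance}) are abnormal minimizers.

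Second, ``the distances differ by $O(h^N)$, consequently the balls differ by a set of measure $O(h^2)$'' does not follow. Both $B_{x,h}$ and $T_h$ only lie between $\{d_{Y,h}(0,\cdot)<\sqrt{1-Ch^N}\}$ and $\{d_{Y,h}(0,\cdot)<\sqrt{1+Ch^N}\}$. Nothing you have proved bounds the measure of this thin shell better than the width of the linear-trapping annuli, which gives only $O(h)$, not $o(h)$. The paper closes this with two facts. First, the scaling identity $|\{d_{Y,h}(0,\cdot)<r\}|=r^Q|T_{rh}|$. Second, $h\mapsto|T_h|$ is constructible with $|T_h|=|\widehat B_x|+O(h)$, hence Lipschitz near $0$ by Lemma~\ref{lem:tame-lipschitz}; this makes the shell $O(h^N)$. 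Finally, your justification of linear trapping is not correct as stated. Correcting an $O(h)$ endpoint error by concatenation costs $O(h^{1/2})$ in step two; this is the $O(h^{1/s})$ ball-box rate. At abnormal minimizers such as those above, $D_uE$ is not onto, so the endpoint map is not linearly open there. Also, the $O(h)$ frame expansion holds in every step, so that is not where step two enters. What actually gives $|d_{x,h}^2-\widehat d_x^2|\le Ch$ is the uniform Lipschitz bound on $d^2(0,\cdot)$ of Lemma~\ref{lem:app-distance-stability}, obtained by bounding proximal subgradients via Rifford's approximate Goh estimate, followed by running the same control in both systems. If you meant to cite that lemma as a black box, the conclusion stands, but the mechanism you describe would not prove it.
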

\begin{remark}
The assertion about the first moments is pointwise.  The proof does not
establish continuity of $\Lambda_i$ or $\Gamma_i$, or locally uniform
convergence of $m_i(x,h)/h$.  Nevertheless, the uniform centering estimate
of Proposition~\ref{prop:boundedness} yields local boundedness and local
$L^p$ convergence; see Corollary~\ref{cor:step-two-consequences}.
\end{remark}
The argument reduces the convergence problem to one-dimensional tame
geometry.  Proposition~\ref{prop:boundedness} gives
$m_i(x,h)=O(h)$.  Lemma~\ref{lem:finite-jet-ball-comparison} then replaces
$B_{x,h}$, with an $o(h)$ error in measure, by the model family whose global
subanalyticity is given by Lemma~\ref{lem:model-family-subanalytic}.  Tame
integration and o-minimal monotonicity exclude oscillation of the normalized
moments.
\subsection{Tame geometry}
\label{subsec:tame-geometry}
We begin with the three facts from tame geometry used below.  For
background, see \cite{BierstoneMilman1988,vdD1998,CluckersMiller2011} and Section 4 in \cite{LeDonneLerarioNalonPaddeuRizzi2026}.
A set or function is \emph{globally subanalytic} if it is definable in
$\R_{\mathrm{an}}$, the real field expanded by restricted analytic
functions.  On a globally subanalytic set, a \emph{constructible function}
is a finite sum of terms
\[
  f\prod_{\nu=1}^m\log g_\nu,
\]
where $f$ and the positive functions $g_\nu$ are globally subanalytic.
Constructible functions form an algebra and are definable in
$\R_{\mathrm{an},\exp}$, which is the smallest o-minimal structure obtained by adjoining the exponential function to $\R_{\mathrm{an}}$. 
\begin{theorem}[Tame integration]\label{thm:tame-integration}
Let $X\subset\R^\ell$ be globally subanalytic and let $F$ be a
constructible function on $X\times\R^n$.  If
$\xi\mapsto F(t,\xi)$ is Lebesgue integrable for every $t\in X$, then
\[
  t\longmapsto\int_{\R^n}F(t,\xi)\,\dd\xi
\]
is constructible on $X$.
\end{theorem}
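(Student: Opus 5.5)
This is the integration theorem of Lion--Rolin, in the form given by Comte--Lion--Rolin and by Cluckers--Miller \cite{CluckersMiller2011}. Within the paper I would cite it rather than reprove it. If a proof is wanted, the plan is an induction on $n$ through Fubini, so that everything reduces to integrating out \emph{one} real variable with globally subanalytic parameters.

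\textbf{Reduction to one variable.} Write $\xi=(\xi',y)$ with $y\in\R$. Apply the one-variable statement to $F$ on $(X\times\R^{n-1})\times\R$, with parameter $(t,\xi')$. The set
\[
X'=\{(t,\xi'):y\mapsto F(t,\xi',y)\text{ is integrable}\}
\]
must be shown to be globally subanalytic. This will come out of the preparation below, since integrability on each cell is decided by finitely many exponent and boundary conditions. Set $G(t,\xi')=\int F(t,\xi',y)\,\dd y$ on $X'$ and $G=0$ off $X'$; then $G$ is constructible.

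For each $t\in X$, Fubini and the integrability of $F(t,\cdot)$ show that the fibre of the complement of $X'$ over $t$ is Lebesgue-null. The same Fubini argument applied to $|F|$, whose one-variable integral is again controlled by the preparation, gives $\int|G(t,\xi')|\,\dd\xi'<\infty$. Hence $\int_{\R^n}F(t,\xi)\,\dd\xi=\int_{\R^{n-1}}G(t,\xi')\,\dd\xi'$, and the induction hypothesis applied to $G$ finishes the step. So the whole content is the case $n=1$ with arbitrary parameters.

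\textbf{One-variable case.} Use the subanalytic preparation theorem of Lion--Rolin (also Parusi\'nski) to partition $X\times\R$ into finitely many cells. These are either graphs, which are null in $y$, or bands $\{a(t)<y<b(t)\}$ with subanalytic boundaries. On each band there is a subanalytic center $\theta(t)$ outside the band such that every globally subanalytic function entering $F$, including the arguments $g_\nu$ of the logarithms, factors as
\[
c(t)\,|y-\theta(t)|^{r}\,U\bigl(\varphi_1(t,y),\ldots,\varphi_N(t,y)\bigr).
\]
Here $r\in\mathbb Q$, $U$ is an analytic unit on a neighborhood of a compact box, and the $\varphi_\ell$ are bounded functions of the form $d_\ell(t)|y-\theta(t)|^{s_\ell}$.

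Substituting this factorization, $\log g_\nu$ becomes $\log c(t)+r\log|y-\theta|+\log U(\cdots)$. Expanding the products, $F$ on a band becomes a finite sum of terms
\[
a(t)\,|y-\theta(t)|^{r}\bigl(\log|y-\theta(t)|\bigr)^{m}\,V(t,y),
\]
where $a$ is constructible in $t$ and $V$ is an analytic function of the $\varphi_\ell$. I would then expand $V$ in its convergent power series, which is a series in powers $|y-\theta|^{\rho}$ with coefficients subanalytic in $t$, and integrate term by term over $(a(t),b(t))$. The elementary primitives of $u^{\rho}(\log u)^{m}$ are of the form $u^{\rho+1}$ times a polynomial in $\log u$. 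In the exceptional case $\rho=-1$ they are $(\log u)^{m+1}/(m+1)$. This is precisely why logarithms must be included in the class.

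Integrability on the band forces the exponent conditions at the endpoints $a,b$, and at $\pm\infty$ on unbounded bands. These conditions define subanalytic subsets of $X$, which gives the definability of $X'$ used above. Evaluating the primitives at the endpoints produces powers of $|b-\theta|,|a-\theta|$ and logarithms of these quantities, all with constructible coefficients.

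\textbf{Main obstacle.} The term-by-term integration produces infinite series, and one must show that each resulting infinite sum is again constructible. This requires the Cluckers--Miller refinement of the preparation theorem, which keeps the prepared data \emph{uniformly} bounded. With that, the resummed series is a restricted analytic function of the subanalytic quantities $|a-\theta|^{\sigma}$, $|b-\theta|^{\sigma}$ and $d_\ell$, multiplied by finitely many explicit powers and logarithms, and is therefore constructible.

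The other delicate point is that the unit $U$ must be arranged to have no logarithmic factors inside it. This is exactly what the strengthened preparation theorem for constructible functions provides, and I expect controlling this uniform preparation to be the real work.
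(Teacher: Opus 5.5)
Your main plan, which is to cite the result rather than reprove it, is exactly the paper's proof: the paper gives only the reference \cite[Theorem~1.3]{CluckersMiller2011}. On attribution: the statement with \emph{constructible} integrands is due to Cluckers--Miller. Lion--Rolin treat globally subanalytic integrands. That would suffice for the paper's actual uses, namely $\mathbf 1_{T_*}$ and $\mathbf 1_{T_*}\xi_i$. It does not suffice for your one-variable-at-a-time induction, which leaves the subanalytic class after the first integration.

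The optional sketch contains a step that fails as written. The set $X'$ need not be globally subanalytic, even under the hypotheses of the theorem. Take $X=(0,\infty)\times\R$, $n=2$, and
\[
  F(t_1,t_2,\xi',y)=\mathbf 1_{\{\xi'=0\}}\,(t_2-\log t_1)\,\mathbf 1_{(1,\infty)}(y).
\]
This $F$ is constructible, and for each $t$ it vanishes off a null subset of $\R^2$, so it is integrable. Yet $X'\cap\{\xi'=0\}$ is the graph of $\log$ on $(0,\infty)$. That graph is not definable in the polynomially bounded structure $\R_{\mathrm{an}}$.

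Your justification breaks because logarithms are present. The conditions for integrability on a band then include the vanishing of \emph{constructible} coefficients of the non-integrable terms $|y-\theta|^r(\log|y-\theta|)^m$, and such zero sets are not subanalytic in general. So you can neither apply the one-variable statement over the parameter set $X'$ nor conclude that $G$, extended by zero off $X'$, is constructible.

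The fix is to state the one-variable result in the Cluckers--Miller form. There is a constructible $g$ on all of $X\times\R^{n-1}$ with $g(t,\xi')=\int F(t,\xi',y)\,\dd y$ at every point of the integrability locus, and no definability of that locus is claimed. Fubini then gives $g(t,\cdot)=G(t,\cdot)$ almost everywhere for each $t$, and your induction goes through unchanged.

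Similarly, $\int|G(t,\xi')|\,\dd\xi'<\infty$ follows directly from Tonelli. Since $|F|$ need not be constructible, you cannot appeal to the preparation theorem for it, and you do not need to.
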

\begin{proof}
This is \cite[Theorem~1.3]{CluckersMiller2011}.
\end{proof}
\begin{lemma}[Bounded constructible limits]
\label{lem:definable-calculus}
Let $\psi:(0,\varepsilon)\to\R$ be bounded and constructible.  Then
$\lim_{t\downarrow0}\psi(t)$ exists and is finite.
\end{lemma}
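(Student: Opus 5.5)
The plan is to derive the lemma from two standard facts about o-minimal structures, applied in $\R_{\mathrm{an},\exp}$.

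\emph{Step 1: definability.} Constructible functions are definable in $\R_{\mathrm{an},\exp}$, so $\psi$ is a definable function of one variable in an o-minimal expansion of the real field.

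\emph{Step 2: monotonicity.} By the monotonicity theorem for o-minimal structures (see \cite{vdD1998}), the interval $(0,\varepsilon)$ splits into finitely many points and open intervals. On each of these intervals $\psi$ is continuous and is either constant or strictly monotone. In particular there is some $\varepsilon'\in(0,\varepsilon)$ such that $\psi$ is monotone on $(0,\varepsilon')$.

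\emph{Step 3: the limit.} A bounded monotone function on $(0,\varepsilon')$ has a finite limit as $t\downarrow0$: it is the supremum or the infimum of $\psi$ on that interval, depending on the direction of monotonicity. Boundedness of $\psi$ makes this limit finite, which proves the lemma.

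The only step that needs care is Step~1: we must invoke o-minimality of $\R_{\mathrm{an},\exp}$ (van den Dries--Macintyre--Marker) and the fact that $\log$ is definable there. Once these are cited, the lemma reduces to the monotonicity theorem, and no estimate specific to this paper is required. An alternative would be to use the Puiseux-type expansion of constructible functions in one variable, as a finite sum of $t^{r}(\log t)^{m}$ terms. Boundedness would then exclude negative powers, and it would also exclude logarithmic terms with nonpositive exponent. The monotonicity route is simpler, so I would use that.
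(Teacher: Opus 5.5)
Your proposal is correct and matches the paper's proof: the paper also notes that $\psi$ is definable in the o-minimal structure $\R_{\mathrm{an},\exp}$, applies the Monotonicity Theorem to make $\psi$ eventually monotone, and uses boundedness to obtain a finite limit. The Puiseux-type alternative you mention is not needed, and the paper does not use it.
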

\begin{proof}
The function is definable in the o-minimal structure
$\R_{\mathrm{an},\exp}$ \cite[Corollary~5.13]{vdDMM1994}.  The Monotonicity
Theorem \cite[Chapter~3]{vdD1998} makes it eventually monotone, and
boundedness gives a finite limit.
\end{proof}
\begin{lemma}[Tame Lipschitz bound]
\label{lem:tame-lipschitz}
Let $\psi:(0,\varepsilon)\to\R$ be constructible.  If
$\psi(t)=a+O(t)$ as $t\downarrow0$ for some $a\in\R$, then, after
decreasing $\varepsilon$, the function $\psi$ is Lipschitz.
\end{lemma}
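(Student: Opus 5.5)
The plan is to apply the o-minimal Monotonicity Theorem to the derivative $\psi'$, or more precisely to the difference quotients. Since $\psi$ is definable in the o-minimal structure $\R_{\mathrm{an},\exp}$ (by \cite[Corollary~5.13]{vdDMM1994}, as in the proof of Lemma~\ref{lem:definable-calculus}), the Monotonicity Theorem gives an $\varepsilon'>0$ such that $\psi$ is $C^1$ on $(0,\varepsilon')$. The derivative $\psi'$ is again definable in $\R_{\mathrm{an},\exp}$, since definable functions have definable derivatives. A second application of monotonicity shows that $\psi'$ is eventually monotone, hence has a limit $L\in[-\infty,+\infty]$ as $t\downarrow0$. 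After shrinking $\varepsilon$ we may also assume $\psi'$ is monotone on the whole interval, so that $|\psi'|$ is bounded on $(0,\varepsilon)$ as soon as $L$ is finite: a monotone function on $(0,\varepsilon)$ with finite limits at both ends is bounded, and the right endpoint can be handled by shrinking to a compact subinterval.

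The key step is to show that $L$ is finite using the hypothesis $\psi(t)=a+O(t)$. Extend $\psi$ continuously by $\psi(0)=a$. By the mean value theorem, for each $t$ there is $\tau\in(0,t)$ with
\[
  \frac{\psi(t)-a}{t}=\psi'(\tau).
\]
The left-hand side is bounded by the $O(t)$ assumption, so $\psi'$ takes bounded values along a sequence $\tau_m\downarrow0$. Because $\psi'$ converges to $L$, it follows that $L$ is finite. Then $\psi'$ is bounded near $0$, and the mean value theorem gives the Lipschitz estimate $|\psi(t)-\psi(s)|\le \sup|\psi'|\,|t-s|$ on $(0,\varepsilon)$ for the decreased $\varepsilon$.

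The main obstacle is the bookkeeping of definability. We need that $\psi'$ is definable in the same structure, which holds because o-minimal structures are closed under taking derivatives of definable $C^1$ functions, the derivative being a first-order definable limit. We also need $C^1$-smoothness near $0$, which is the $C^1$ cell-decomposition or monotonicity theorem \cite[Chapter~7]{vdD1998}. We do not need $\psi'$ to be constructible; definability in $\R_{\mathrm{an},\exp}$ suffices for both applications of monotonicity.
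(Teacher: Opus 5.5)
Your proof is correct and follows essentially the same route as the paper: use $C^1$ o-minimal monotonicity to make $\psi'$ monotone, so that $\lim_{t\downarrow0}\psi'(t)$ exists in the extended reals, then combine the mean value theorem with $\psi(t)=a+O(t)$ to rule out an infinite limit. The only cosmetic difference is that you apply the mean value theorem on $[0,t]$ after extending $\psi(0)=a$, whereas the paper applies it on $[t/2,t]$ and so needs no extension.
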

\begin{proof}
By o-minimal monotonicity in its $C^1$ form
\cite[Chapter~3]{vdD1998}, after decreasing $\varepsilon$ the function
$\psi$ is $C^1$ and $\psi'$ is monotone.  Thus
$\lambda:=\lim_{t\downarrow0}\psi'(t)$ exists in the extended real line.
If $|\lambda|=\infty$, the mean value theorem on $[t/2,t]$ gives some
$\tau_t\in(t/2,t)$ for which
\[
  \frac{|\psi(t)-\psi(t/2)|}{t}=\frac{|\psi'(\tau_t)|}2
  \longrightarrow\infty,
\]
contradicting $\psi(t)=a+O(t)$.  Hence $\lambda$ is finite, so $\psi'$ is
bounded near the origin and $\psi$ is Lipschitz there.
\end{proof}
\subsection{Linear trapping estimate}
\label{subsec:boundedness}
Fix an adapted bracket frame on $O$ as in
Subsection~\ref{subsec:general-definitions}, and let $K\Subset O$.  For
$h>0$, write
\[
  X^{x,h}:=(X_1^{x,h},\ldots,X_k^{x,h}),
\]
where the rescaled fields are defined in
\eqref{eq:general-rescaled-fields}.  We write
$\widehat X_{i,x}:= X_i^{x,0} $.
\begin{proposition}[Expansion of the rescaled frame]
\label{prop:frame-expansion}
Let $\Omega\subset\R^n$ be a compact coordinate box.  For every
$1\le i\le k$, the coefficients of $X_i^{x,h}$ extend smoothly in
$(x,h,\xi)$ to a neighborhood of
$K\times\{0\}\times\Omega$.  Moreover, for every integer $m\ge0$, there
exist $C_{K,\Omega,m}>0$ and $h_{K,\Omega,m}>0$ such that
\begin{equation}\label{eq:frame-rate}
  \sup_{x\in K}
  \bigl\|X_i^{x,h}-\widehat X_{i,x}\bigr\|_{C^m(\Omega)}
  \le C_{K,\Omega,m}h
\end{equation}
for every $h \in (0,h_{K,\Omega,m})$.
\end{proposition}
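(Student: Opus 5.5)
We will write the coefficients of $X_i^{x,h}$ explicitly in the first-kind chart and exhibit them as smooth functions of $(x,h,\xi)$ that include $h=0$. In the chart $\Phi_x^{-1}$, the pushforward $(\Phi_x^{-1})_*X_i$ is a vector field $\sum_b a_{ib}(x,z)\partial_{z_b}$. By Proposition~\ref{prop:first-kind-privileged}, $\Phi_x$ depends smoothly on $(x,z)$ and is a diffeomorphism on a common neighborhood of $0$. Hence the coefficients $a_{ib}$ are smooth on a neighborhood of $K\times\{0\}$. The rescaling $h(\delta_{1/h})_*$ gives
\[
  X_i^{x,h}=\sum_{b=1}^n h^{1-w_b}\,a_{ib}(x,\delta_h\xi)\,\partial_{\xi_b}.
\]
For $\xi\in\Omega$ and $h$ small, $\delta_h\xi$ stays in the common neighborhood, so this expression makes sense. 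The whole point is to show that $h^{1-w_b}a_{ib}(x,\delta_h\xi)$ extends smoothly to $h$ near $0$, including negative $h$.

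The key step, and the main obstacle, is a uniform weighted order estimate. We must show that $a_{ib}(x,\cdot)$ has weighted order at least $w_b-1$ at $z=0$: every monomial $z^\alpha$ in its Taylor expansion with weighted degree $\sum_c\alpha_c w_c<w_b-1$ has vanishing coefficient, for every $x\in K$. This is exactly the statement that $X_i$ has nonholonomic order $\ge -1$ in privileged coordinates, which is standard for privileged coordinates (Bella\"iche, Jean). We will invoke it through the fact that each $z_b=\Phi_x^{-1}$-component has order $w_b$, from Proposition~\ref{prop:first-kind-privileged}. Differentiating along a horizontal field lowers the order by at most one, so $X_iz_b$ has order $\ge w_b-1$. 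For a smooth function, having nonholonomic order $\ge N$ at $0$ in privileged coordinates is equivalent to its Taylor expansion containing only monomials of weighted degree $\ge N$. Since the coefficients of these monomials are derivatives of $a_{ib}$ at $z=0$, the vanishing is an identity valid for each $x$, and it needs no uniformity argument.

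With this in hand, we apply Taylor's formula with integral remainder in $z$ to $a_{ib}(x,\cdot)$ up to weighted degree $w_b-1$, or equivalently use Hadamard's lemma in the weighted form. This writes
\[
  a_{ib}(x,\delta_h\xi)=\sum_{\mathrm{wdeg}(\alpha)\ge w_b-1,\ |\alpha|\le w_b-1}c_\alpha(x)h^{\mathrm{wdeg}\alpha}\xi^\alpha+\sum_{|\alpha|=w_b}h^{\mathrm{wdeg}\alpha}\xi^\alpha R_\alpha(x,\delta_h\xi),
\]
with $R_\alpha$ smooth. Every weighted degree appearing is at least $w_b-1$; in the remainder terms, $\mathrm{wdeg}\,\alpha\ge|\alpha|=w_b\ge w_b-1$. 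So multiplying by $h^{1-w_b}$ leaves only nonnegative integer powers of $h$, times functions smooth in $(x,\delta_h\xi)$. The map $(h,\xi)\mapsto\delta_h\xi$ is polynomial and therefore smooth for all real $h$, which gives the smooth extension in $(x,h,\xi)$.

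At $h=0$ only the terms of weighted degree exactly $w_b-1$ survive. These are the homogeneous nilpotent-approximation coefficients, which by definition give $\widehat X_{i,x}$, consistent with the $C^\infty$ convergence recalled in Subsection~\ref{subsec:general-definitions}. Finally, we get the rate~\eqref{eq:frame-rate} from the mean value theorem in $h$. The coefficients $c(x,h,\xi)$ are smooth on a compact neighborhood of $K\times[0,h_0]\times\Omega$, so
\[
  \|\partial_\xi^\gamma(c(x,h,\cdot)-c(x,0,\cdot))\|_{\infty}\le h\sup|\partial_h\partial_\xi^\gamma c|
\]
for $|\gamma|\le m$, and the supremum is finite by compactness.
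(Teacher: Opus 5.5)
Your proposal is correct and follows essentially the same route as the paper. Privilegedness kills the Taylor coefficients of $f_a^i(x,\cdot)$ of weighted degree below $w_a-1$. Hence $h^{1-w_a}f_a^i(x,\delta_h\xi)$ extends smoothly through $h=0$, and its value there is the nilpotent coefficient. A first-order Taylor (mean-value) estimate in $h$ on a compact parameter set then gives the $O(h)$ rate in every $C^m$ norm.

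You make explicit two steps the paper only asserts. For the smooth extension you use Taylor's formula with integral remainder. For the vanishing you use the equivalence of nonholonomic and weighted order in privileged coordinates.
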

\begin{proof}
In first-kind coordinates, write
\[
  (\Phi_x^{-1})_*X_i
  =\sum_{a=1}^nf_a^i(x,z)\partial_{z_a}.
\]
For a multi-index $\alpha=(\alpha_1,\ldots,\alpha_n)$, its weighted degree is $\sum_{b=1}^n w_b\alpha_b$.
Privilegedness implies that the Taylor coefficients of
$f_a^i(x,\cdot)$ of weighted degree less than $w_a-1$ vanish.  Hence the $\partial_{\xi_a}$-coefficient
\begin{equation}\label{eq:rescaled-coefficient}
  F_a^i(x,h,\xi)
  :=h^{1-w_a}f_a^i(x,\delta_h\xi)
\end{equation}
extends smoothly through $h=0$.  Its value at
$h=0$ is the corresponding coefficient of $\widehat X_{i,x}$.  Taylor's
formula in $h$ then
gives
\[
  F_a^i(x,h,\xi)=F_a^i(x,0,\xi)+hR_a^i(x,h,\xi),
\]
where $R_a^i$ is uniformly bounded in $C_\xi^m$ on the prescribed compact
set.  This proves
\eqref{eq:frame-rate}.
\end{proof}
We now improve the tangent-ball trapping of
Proposition~\ref{prop:uniform-ball-control} to a linear estimate.
\begin{proposition}[Linear trapping and first-moment bound]
\label{prop:boundedness}
For every $K\Subset O$, there are constants $C,h_K>0$ such that
\begin{equation}\label{eq:linear-trapping}
  \widehat B_x(1-Ch)\subset B_{x,h}
  \subset\widehat B_x(1+Ch),
  \qquad x\in K,\quad 0<h<h_K,
\end{equation}
and
\begin{equation}\label{eq:moment-bound}
  \left|\int_{B_{x,h}}\xi_i\,\dd\xi\right|\le Ch,
  \qquad 1\le i\le k.
\end{equation}
\end{proposition}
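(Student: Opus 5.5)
The plan is to prove the linear trapping \eqref{eq:linear-trapping} by a uniform Lipschitz-type comparison of the rescaled distance $d_{x,h}(0,\cdot)$ with $\widehat d_x(0,\cdot)$. The moment bound \eqref{eq:moment-bound} then follows quickly. Granting \eqref{eq:linear-trapping}, the central symmetry \eqref{eq:tangent-first-moments-zero} gives $\int_{\widehat B_x}\xi_i\,\dd\xi=0$, so
\[
\Bigl|\int_{B_{x,h}}\xi_i\,\dd\xi\Bigr|\le \sup_{\mathcal C}|\xi_i|\cdot|B_{x,h}\mathbin\triangle\widehat B_x|\le C'\bigl((1+Ch)^Q-(1-Ch)^Q\bigr)|\widehat B_x|=O(h).
\]
Here $\mathcal C$ is the common compact box from Proposition~\ref{prop:tangent-ball-convergence-general}, and $|\widehat B_x|$ is uniformly bounded on $K$ by ball-box.

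For the trapping itself, it suffices to show that, for $x\in K$, $\xi$ in a fixed box $\Omega\supset\widehat B_x(2)$, and small $h$,
\[
|d_{x,h}(0,\xi)-\widehat d_x(0,\xi)|\le Ch.
\]
I would prove the two one-sided inequalities by transporting controls between the frames $X^{x,h}$ and $\widehat X_x$. Note that $d_{x,h}$ is, near the origin, the sub-Riemannian distance of the rescaled frame $X^{x,h}$, and Proposition~\ref{prop:frame-expansion} gives $\|X^{x,h}-\widehat X_x\|_{C^1(\Omega)}\le Ch$.

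For the upper bound $d_{x,h}(0,\xi)\le\widehat d_x(0,\xi)+Ch$, take a $\widehat X_x$-geodesic control $u$ from $0$ to $\xi$ of $L^2$ (or $L^\infty$) norm $\widehat d_x(0,\xi)\le 2$. Apply the same control to $X^{x,h}$. Gronwall and the $C^1$ estimate show that the endpoint $\xi'$ satisfies $|\xi'-\xi|\le Ch$ in the Euclidean norm.

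The crucial step is then to correct the endpoint error at cost $O(h)$ in distance, not the $O(h^{1/2})$ that the ball-box estimate would give for an error in the second layer. This is where step $\le2$ enters, and it is what I expect to be the main obstacle. The endpoint map of the model frame near a long geodesic is a submersion only at nonsingular controls. I would therefore avoid geodesics and use a centered localization argument instead, as in Lemmas~\ref{lem:uniform-confinement} and \ref{lem:app-distance-stability}, which the section announces.

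Because the tangent frames are two-generating uniformly in $x\in K$, I would first find a fixed finite family of controls of size $O(1)$ whose concatenations make the endpoint map of $X^{x,h}$ (for all small $h$, uniformly in $x$) a submersion at the base. An open-mapping or implicit-function argument with uniform constants then shows that every point within Euclidean distance $\rho$ of a given endpoint is reached by perturbing the control by $O(\rho)$ in $L^1$. The key point is that such a regular control can be spliced into a near-optimal path at a fixed interior time. It only needs $L^1$ size $O(\rho)$, which changes length by $O(\rho)$, rather than reaching a second-layer displacement of size $\rho$ by a loop of length $\rho^{1/2}$. This yields $d_{x,h}(0,\xi)\le \widehat d_x(0,\xi)+C h$.

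The reverse inequality is obtained symmetrically by exchanging the roles of $X^{x,h}$ and $\widehat X_x$, starting from an almost-minimizing control for $d_{x,h}$. By ball-box, this control stays in $\Omega$ and has bounded norm.

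Finally, homogeneity \eqref{eq:tangent-distance-homogeneous} converts the additive $O(h)$ distance error into the inclusions $\widehat B_x(1-Ch)\subset B_{x,h}\subset\widehat B_x(1+Ch)$.
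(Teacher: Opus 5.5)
Your derivation of the moment bound from the trapping, the Gronwall transport of a minimizing control between the frames $X^{x,h}$ and $\widehat X_x$, and the final conversion to inclusions all agree with the paper. The paper estimates $d^2$ rather than $d$, which makes no difference near the unit sphere. The gap is the step you flag as crucial: correcting a Euclidean endpoint error $\rho\sim h$ at length cost $O(\rho)$. Splicing cannot do this. Let $P_{t_0\to1}$ be the flow of $u$ from $t_0$ to $1$. A control of $L^1$-size $s$ inserted at time $t_0$ moves the endpoint, to first order, only within $(P_{t_0\to1})_*\D_{\gamma(t_0)}\subset\operatorname{Im}D_u\operatorname{End}$. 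Displacements transverse to $\operatorname{Im}D_u\operatorname{End}$ are $O(s^2)$, so a transverse correction of size $\rho$ needs $s\gtrsim\rho^{1/2}$, which is exactly the ball-box loss. The regular base control at which your concatenation map is a submersion does not help either. It must itself be traversed as a loop, which costs $O(1)$ length. If you shrink it to size $\sigma$, the inverse bound in the bracket directions becomes of order $1/\sigma$, and $\sigma+\rho/\sigma\ge2\rho^{1/2}$. Perturbing the near-optimal control itself would require a uniform right inverse of $D_u\operatorname{End}$ along near-minimizers. This fails in step two because singular minimizers exist. In $\R\times\mathbb H^1$ (Remark~\ref{rem:non-scalar-covariance}), the segment $t\mapsto(t,e)$, $0\le t\le1$, minimizes to the unit-sphere point $(1,e)$, yet its linearized endpoint map misses the vertical direction of $\mathbb H^1$.

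The linear rate is still true in that example, but only through a second-order energy mechanism. There $d\bigl((0,e),(1,p)\bigr)^2=1+\widehat d_0(e,p)^2$, and by homogeneity $\widehat d_0(e,p)^2=c|p_3|$ on the vertical axis, so the cost is $\sqrt{1+c\rho}-1=O(\rho)$. This cost is realized by spreading a Heisenberg loop of length $\rho^{1/2}$ along the whole path, i.e.\ a perturbation $w\perp u$ raising the energy by $\|w\|_{L^2}^2\sim\rho$. Splicing the same loop at one time costs $\rho^{1/2}$ in length. The missing ingredient is uniform Euclidean Lipschitz continuity of $q\mapsto d(0,q)^2$ over the compact family of frames, which is Lemma~\ref{lem:app-distance-stability}. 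The paper proves it as follows:
\begin{itemize}
\item each proximal subgradient $p$ of $\tfrac12d(0,\cdot)^2$ is a normal multiplier with nonnegative second variation on $\ker D_u\operatorname{End}$;
\item Rifford's quantitative approximate Goh estimate (Lemma~\ref{lem:uniform-approximate-Goh}) then forces the adjoint covector to nearly annihilate the brackets $[Y_i,Y_j]$ when $|p|$ is large;
\item the normal relation bounds its horizontal components by $R$;
\item two-generation gives $|p|\le A(R+\kappa|p|)$, hence a uniform bound, and the nonsmooth Lipschitz criterion finishes.
\end{itemize}
With this Lipschitz bound, your Gronwall step immediately gives $|d_{x,h}(0,\xi)^2-\widehat d_x(0,\xi)^2|\le C_0h$, and no splicing is needed.
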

\begin{proof}
By Proposition~\ref{prop:uniform-ball-control}, there is a compact
coordinate box $\mathcal C$ containing $B_{x,h}$ and $\widehat B_x(2)$ for
every $x\in K$ and all sufficiently small $h$.  Choose a compact coordinate
box $\mathcal C^+$ such that
$\mathcal C\cup\{0\}\subset\operatorname{int}\mathcal C^+$, and apply
Lemma~\ref{lem:uniform-confinement} with $ C=\mathcal C^+$ to the
family $X^{x,h}$.  The lemma supplies common boxes and the same confinement
bound for $X^{x,h}$ and $\widehat X_x$.  Applying
Lemma~\ref{lem:app-distance-stability} and
Proposition~\ref{prop:frame-expansion}, and decreasing $h_K$ if necessary,
yields
\begin{equation}\label{eq:squared-distance-Ch}
  \sup_{\substack{x\in K\\\xi\in\mathcal C}}
  \left|d_{x,h}(0,\xi)^2-\widehat d_x(0,\xi)^2\right|
  \le C_0h.
\end{equation}
Choose $C>C_0/2$ and decrease $h_K$ so that $1-Ch>0$ and
\[
  1+C_0h<(1+Ch)^2,
  \qquad
  (1-Ch)^2+C_0h<1.
\]
If $\xi\in B_{x,h}$, then \eqref{eq:squared-distance-Ch} gives
$\widehat d_x(0,\xi)<1+Ch$.  If
$\widehat d_x(0,\xi)<1-Ch$, the same estimate gives
$d_{x,h}(0,\xi)<1$.  This proves \eqref{eq:linear-trapping}. As already pointed out, in first-kind coordinates, inversion on the tangent group is
$\xi\mapsto-\xi$.  It preserves the distance from the identity and Lebesgue
measure.  Thus $\widehat B_x$ is centrally symmetric and
$\int_{\widehat B_x}\xi_i\,\dd\xi=0$.  By
\eqref{eq:linear-trapping}, homogeneity, and the uniform bounds for
$|\widehat B_x|$ and $\sup_{\mathcal C}|\xi_i|$, we obtain
\begin{equation}\label{eq:ball-difference-linear}
  \left|\int_{B_{x,h}}\xi_i\,\dd\xi\right|
  \le \sup_{\mathcal C}|\xi_i|\,
      |B_{x,h}\mathbin\triangle\widehat B_x|
  \le C\bigl((1+Ch)^Q-(1-Ch)^Q\bigr)\le C'h.
\end{equation}
This is \eqref{eq:moment-bound}.
\end{proof}
\subsection{Analytic finite-jet reduction}
\label{subsec:analytic-comparison}
Fix $x\in O$.  All constants in this subsection may depend on $x$.  We
approximate the smooth  frame by a finite weighted
Taylor polynomial.  The resulting approximating sub-Riemannian structure is real analytic and
two-generating, so its distance is subanalytic near the diagonal by
\cite[Theorem~4.2]{AgrachevSarychev1999}; see also \cite{Jacquet1999}.
Choose compact boxes $C,C^+\subset\R^n$ such that
\[
  \overline{\widehat B_x(3)}\subset\operatorname{int}C,
  \qquad C\subset\operatorname{int}C^+.
\]
Apply Lemma~\ref{lem:uniform-confinement} with $K=\{x\}$ and $C=C^+$.
Let $U_0\Subset U_1$, $R$, and $h_*>0$ be the resulting
localization data for $X^{x,h}$.  After decreasing $h_*$, the frames
$X^{x,h}$ are pointwise linearly independent and two-generating on
$\overline U_1$ for $0\le h\le h_*$.  Proposition~\ref{prop:boundedness}
and the identity
\[
  \{\xi:d_{x,h}(0,\xi)<2\}=\delta_2B_{x,2h}
\]
show, after a further decrease of $h_*$, that the centered radius-two
$X^{x,h}$-balls are contained in $\widehat B_x(3)$ and hence in
$\operatorname{int}C$.
In first-kind coordinates, write
\[
  (\Phi_x^{-1})_*X_i \, (z)
  =\sum_{a=1}^n f_a^i(z)\partial_{z_a}.
\]
where the $f_a^i$ are smooth functions. For a multi-index $\alpha=(\alpha_1,\ldots,\alpha_n)$, write
$|\alpha|_w:=\sum_{b=1}^n w_b\alpha_b$ and $\alpha ! = \alpha_1!\ldots \alpha_n!$.
Fix an integer $N\ge3$.  For every $i$ and $a$, let $f_a^{i,[N]}$ be the
weighted Taylor polynomial
\[
  f_a^{i,[N]}(z)
  :=\sum_{|\alpha|_w\le w_a+N-2}
    \frac{\partial^\alpha f_a^i(0)}{\alpha!}z^\alpha.
\]
Define the polynomial vector fields
\[
  Y_i:=\sum_{a=1}^n f_a^{i,[N]}\partial_{z_a},
  \qquad 1\le i\le k,
\]
and write $Y:=(Y_1,\ldots,Y_k)$.  Its rescaled frame
$Y^h:=(Y_1^h,\ldots,Y_k^h)$ is defined componentwise by
\[
  Y_i^h:=h(\delta_{1/h})_*Y_i.
\]
Fix an arbitrary homogeneous quasi-norm $\|\cdot\|_w$ satisfying
$\|\delta_hz\|_w=h\|z\|_w$.
\begin{lemma}[Weighted Taylor estimate]
\label{lem:weighted-taylor-estimate}
 There are a neighborhood $W$ of the origin
and a constant $C_N$ such that, for $z\in W$, $1\le i\le k$,
$1\le a\le n$, and every multi-index $\gamma$ with $|\gamma|_w\le2$,
\[
  \bigl|\partial_z^\gamma
    (f_a^i-f_a^{i,[N]})(z)\bigr|
  \le C_N\|z\|_w^{w_a+N-1-|\gamma|_w}.
\]
Consequently, after decreasing $h_*$ so that
$\delta_h\overline U_1\subset W$ for $0<h<h_*$,
\begin{equation}\label{eq:finite-jet-frame-error}
  \max_{1\le i\le k}
  \|X_i^{x,h}-Y_i^h\|_{C^2(\overline U_1)}
  \le C_Nh^N,
  \qquad 0<h<h_*.
\end{equation}
\end{lemma}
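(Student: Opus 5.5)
The plan is to prove the pointwise estimate by a weighted Taylor remainder argument, and then obtain \eqref{eq:finite-jet-frame-error} by rescaling.

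\emph{Pointwise estimate.} Set $g:=f_a^i-f_a^{i,[N]}$. Since $f_a^{i,[N]}$ is the weighted Taylor polynomial of $f_a^i$ of weighted order $w_a+N-2$, every Taylor coefficient of $g$ at $0$ of weighted degree $\le w_a+N-2$ vanishes. Differentiating lowers the weighted degree of monomials by exactly $|\gamma|_w$. Hence every Taylor coefficient of $\partial^\gamma g$ of weighted degree $\le w_a+N-2-|\gamma|_w$ vanishes.

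Now expand $\partial^\gamma g$ by the ordinary Taylor formula around $0$ to a high Euclidean order $L$ (say $L\ge w_a+N$), with integral remainder. For a monomial $z^\beta$ on a bounded neighborhood $W$ we have
\[
|z^\beta|\le C\|z\|_w^{|\beta|_w},
\]
because $|z_b|\le C\|z\|_w^{w_b}$. Consequently:
\begin{itemize}
\item The surviving polynomial terms have weighted degree $\ge w_a+N-1-|\gamma|_w$. On $W$, where $\|z\|_w\le1$, each of them is bounded by $C\|z\|_w^{w_a+N-1-|\gamma|_w}$.
\item The remainder is $O(|z|^{L+1})$. Since $|z|\le C\|z\|_w$ (weights $\ge1$, $\|z\|_w\le1$), it is bounded by $C\|z\|_w^{L+1}$, which is at most $C\|z\|_w^{w_a+N-1-|\gamma|_w}$.
\end{itemize}
This gives the first claim. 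The exponent may be $\le0$ only when $|\gamma|_w\ge w_a+N-1$, which is impossible since $N\ge3$ and $|\gamma|_w\le2$.

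\emph{Rescaled frame estimate.} Exactly as in \eqref{eq:rescaled-coefficient}, the $\partial_{\xi_a}$-coefficient of $X_i^{x,h}-Y_i^h$ is
\[
G(h,\xi):=h^{1-w_a}g(\delta_h\xi).
\]
By the chain rule, for $|\gamma|\le2$ Euclidean,
\[
\partial_\xi^\gamma G=h^{1-w_a+|\gamma|_w}(\partial^\gamma g)(\delta_h\xi).
\]
Two points need checking here:
\begin{itemize}
\item A Euclidean derivative of order $\le2$ can have weighted order up to $2s\le4$, not just $\le2$. I would therefore prove the pointwise estimate for all multi-indices with $|\gamma|\le2$; the argument above is unchanged, and the exponent remains positive for $N\ge3$ after adjusting $N$ or accepting the stated range. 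If the intended statement really restricts to $|\gamma|_w\le2$, I would simply apply the same proof to the needed derivatives.
\item We need $\delta_h\xi\in W$, which holds for $\xi\in\overline U_1$ and $0<h<h_*$ by the choice of $h_*$.
\end{itemize}
Inserting the pointwise bound and using $\|\delta_h\xi\|_w=h\|\xi\|_w\le Ch$ gives
\[
|\partial_\xi^\gamma G|\le C_N\,h^{1-w_a+|\gamma|_w}\,h^{w_a+N-1-|\gamma|_w}=C_Nh^N.
\]
The powers of $h$ cancel exactly. Taking the maximum over $i$, $a$ and $|\gamma|\le2$ yields \eqref{eq:finite-jet-frame-error}.

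The main obstacle is keeping the exponent bookkeeping consistent, in particular whether the $C^2$ norm in \eqref{eq:finite-jet-frame-error} involves derivatives of weighted order larger than $2$. The cancellation above works for any $\gamma$ as long as the pointwise estimate holds for that $\gamma$, so I would state and prove the pointwise estimate for all $\gamma$ with $|\gamma|\le2$.
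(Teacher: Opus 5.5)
Your argument is correct. The rescaling half is the paper's: the same chain-rule identity $\partial_\xi^\gamma[h^{1-w_a}g(\delta_h\xi)]=h^{1-w_a+|\gamma|_w}(\partial^\gamma g)(\delta_h\xi)$ and the same cancellation of the powers of $h$.

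The difference lies in the scalar estimate. The paper writes $z=\delta_t\theta$ with $\|\theta\|_w=1$ and uses $\partial^\gamma P_q\varphi=P_{q-|\gamma|_w}(\partial^\gamma\varphi)$. It identifies the $j$-th $s$-derivative at $0$ of $s\mapsto\partial^\gamma\varphi(\delta_s\theta)$ with the weight-$j$ part of that polynomial. It then applies the one-variable Taylor formula in $s$ of order $q-|\gamma|_w$, which gives the remainder $O(t^{q+1-|\gamma|_w})$ uniformly on the compact weighted sphere. You instead expand $\partial^\gamma g$ by the multivariable Euclidean Taylor formula to a high order $L\ge w_a+N$. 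You discard the vanishing coefficients and bound each surviving monomial by $|z^\beta|\le C\|z\|_w^{|\beta|_w}$. The Euclidean remainder is absorbed via $|z|\le C\|z\|_w$ on $\{\|z\|_w\le 1\}$.

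The paper's route is more economical: it needs only $C^{q+1}$ and produces the homogeneous remainder in one step. Yours avoids the dilation-curve bookkeeping and does not need $q-|\gamma|_w\ge 0$. The extra derivatives you use cost nothing, since the $f_a^i$ are smooth.

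Your remark about the $C^2$ norm is well taken. Euclidean second derivatives in two weight-two directions have $|\gamma|_w=4$, which lies outside the stated range $|\gamma|_w\le 2$. The paper's line ``since $|\gamma|_w\le w_a+N-1$'' tacitly ignores this.

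However, your claim that the exponent stays positive for $N\ge 3$ fails when $w_a=1$, $N=3$, $|\gamma|_w=4$, where the exponent is $-1$. In that case your step $(h\|\xi\|_w)^{e}\le (Ch)^{e}$ breaks, because a negative power of $\|\xi\|_w$ blows up near $\xi=0$. No adjustment of $N$ is needed to fix this. Whenever $|\gamma|_w\ge w_a+N-1$, use only that $\partial^\gamma g$ is bounded on $W$. This gives $|\partial_\xi^\gamma G|\le C h^{1-w_a+|\gamma|_w}\le C h^N$, because $1-w_a+|\gamma|_w\ge N$ and $h<1$.

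Downstream, only the $C^1$ version of \eqref{eq:finite-jet-frame-error} is actually used, through Lemma~\ref{lem:app-distance-stability}. That version needs only $|\gamma|_w\le 2$.
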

\begin{proof}
We first record the scalar weighted Taylor estimate.  Let $q\ge2$, let
$\varphi$ be of class $C^{q+1}$ near the origin, and set
\[
  P_q\varphi(z):=\sum_{|\alpha|_w\le q}
    \frac{\partial^\alpha\varphi(0)}{\alpha!}z^\alpha.
\]
Since $q-|\gamma|_w\ge0$, we have
$\partial^\gamma P_q\varphi=P_{q-|\gamma|_w}(\partial^\gamma\varphi)$.
Write $z=\delta_t\theta$, where $t=\|z\|_w$ and
$\|\theta\|_w=1$.  For $0\le j\le q-|\gamma|_w$,
\[
  \frac1{j!}\left.\frac{d^j}{d s^j}\right|_{s=0}
  \partial^\gamma\varphi(\delta_s\theta)
  =\sum_{|\beta|_w=j}
    \frac{\partial^{\beta+\gamma}\varphi(0)}{\beta!}\theta^\beta.
\]
Consequently, Taylor's formula in the single variable $s$, applied to
$s\mapsto\partial^\gamma\varphi(\delta_s\theta)$, gives, uniformly for
$\theta$ on the compact weighted unit sphere,
\[
  \left|\partial^\gamma(\varphi-P_q\varphi)(z)\right|
  \le Ct^{q+1-|\gamma|_w}.
\]
Taking
$q=w_a+N-2\ge2$ and $\varphi=f_a^i$ proves the first assertion.
The $\partial_{\xi_a}$-coefficient of
$X_i^{x,h}-Y_i^h$ is
\[
  h^{1-w_a}(f_a^i-f_a^{i,[N]})(\delta_h\xi).
\]
For each such $\gamma$, the chain rule gives
\[
  \partial_\xi^\gamma
  \left[h^{1-w_a}
    (f_a^i-f_a^{i,[N]})(\delta_h\xi)\right]
  =h^{1-w_a+|\gamma|_w}
    \partial_z^\gamma(f_a^i-f_a^{i,[N]})(\delta_h\xi).
\]
Since $|\gamma|_w\le w_a+N-1$, the first assertion and
$\|\delta_h\xi\|_w=h\|\xi\|_w$ give the factor
\[
  h^{1-w_a+|\gamma|_w}
  h^{w_a+N-1-|\gamma|_w}=h^N.
\]
Since $\overline U_1$ is
compact, the stated $C^2$ estimate follows after decreasing $h_*$ so that
$\delta_h\overline U_1\subset W$.
\end{proof}
Since $w_a\le2$ and $N\ge3$, the truncation retains all constant and linear
monomials.  Thus the values and first derivatives of $Y_i$ at the origin
agree with those of $(\Phi_x^{-1})_*X_i$.  In particular, the values of the
$Y_i$ and their first brackets span at the origin, and the same rank
conditions hold near the origin.  The terms of weighted degree $w_a-1$ are
also retained, so the nilpotent approximation of $Y$ is $\widehat X_x$.
Choose a connected open neighborhood \(V\subset\mathbb R^n\) of the
origin on which \(Y_1,\ldots,Y_k\) are pointwise linearly independent
and two-generating. Since the \(Y_i\) are polynomial, they define a
real-analytic sub-Riemannian structure on \(V\). Let
$d_Y$ denote the associated control distance with trajectories
restricted to $V$.  After decreasing $h_*$, assume that
$\delta_h\overline{U_1}\subset V$ for $0<h<h_*$.  Define
\[
  d_{Y,h}(\eta,\zeta)
  :=h^{-1}d_Y(\delta_h\eta,\delta_h\zeta).
\]
The retained terms of weighted degree $w_a-1$ form the nilpotent
approximation, whereas every other retained term has degree $d\ge w_a$ and
rescales by $h^{1+d-w_a}=O(h)$.  Hence
$\|Y_i^h-\widehat X_{i,x}\|_{C^1(\overline U_1)}=O(h)$.
Extend the family to $h=0$ by setting
$Y^0:=\widehat X_x$.  Since the boxes $U_0\Subset U_1$ and the bound
$R$ in Lemma~\ref{lem:uniform-confinement} depend only on $C^+$ and the
tangent frame, the lemma applies to $Y^h$ with the same data after a
further decrease of $h_*$.
\begin{lemma}[Global subanalyticity of $T_*$]
\label{lem:model-family-subanalytic}
Set
\[
  T_*:=\{(h,\xi)\in(0,h_*)\times C:
       d_Y(0,\delta_h\xi)<h\}.
\]
After decreasing $h_*$, the set $T_*$ is globally subanalytic.
\end{lemma}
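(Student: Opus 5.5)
The plan is to realize $T_*$ as a Boolean combination of sets defined by restricted analytic data, using the subanalyticity of $d_Y$ near the diagonal from \cite[Theorem~4.2]{AgrachevSarychev1999}. That theorem applies because $Y$ is a real-analytic, two-generating frame on $V$. It gives a neighborhood $V_0\subset V$ of the origin, which we may take to be a small open box, such that the function $(\eta,\zeta)\mapsto d_Y(\eta,\zeta)$ restricted to $V_0\times V_0$ is subanalytic. We only need the section $\zeta\mapsto d_Y(0,\zeta)$ on a compact box $\overline{V_1}\subset V_0$ centered at $0$. A subanalytic set that is \emph{relatively compact} in $\R^{n}$ is globally subanalytic: its graph is bounded and subanalytic in the ambient space, hence definable in $\R_{\mathrm{an}}$ (see \cite{vdD1998}). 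So the first step is to check that the graph of $\rho(\zeta):=d_Y(0,\zeta)$ over $\overline{V_1}$ is a bounded subanalytic subset of $\R^{n+1}$. Subanalyticity is local, and boundedness follows from continuity of $\rho$ on the compact box. Thus $\rho|_{\overline{V_1}}$ is globally subanalytic.

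The second step deals with the dilation. The map $(h,\xi)\mapsto(h,\delta_h\xi)=(h,h^{w_1}\xi_1,\ldots,h^{w_n}\xi_n)$ is polynomial, hence semialgebraic and globally subanalytic. We decrease $h_*$ so that $\delta_h C\subset V_1$ for all $0<h<h_*$; this is possible since $C$ is compact and $w_a\ge1$. The set $T_*$ is then the preimage, under a globally subanalytic map defined on the semialgebraic set $(0,h_*)\times C$, of the globally subanalytic set $\{(h,\zeta,r): r<h\}$ intersected with the graph of $\rho$. More concretely,
\[
T_*=\{(h,\xi)\in(0,h_*)\times C:\ \exists r\ (\delta_h\xi,r)\in\operatorname{graph}\rho,\ r<h\}.
\]
This is a first-order formula over $\R_{\mathrm{an}}$ with globally subanalytic parameters, so $T_*$ is globally subanalytic. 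Closure under projection is fine here, since $\R_{\mathrm{an}}$ is o-minimal and definable sets are closed under projections.

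The main obstacle is a subtle point about which distance is involved. The Agrachev--Sarychev theorem concerns the control distance of the analytic structure, but our $d_Y$ restricts trajectories to $V$. These two distances agree near the origin only if short minimizers do not leave $V$. I would settle this with the localization furnished by Lemma~\ref{lem:uniform-confinement} applied to $Y^h$, which the text above already notes is available with the same data $U_0\Subset U_1,R$. For points $\delta_h\xi$ with $\xi\in C$ and $d_Y(0,\delta_h\xi)<h$ (or even $<2h$), the rescaled statement says curves of $Y^h$-length below $2$ from $0$ stay in $U_1$. Therefore curves of length below $2h$ stay in $\delta_hU_1\subset V$ and are unaffected by the restriction. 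Consequently, on the relevant region, membership in $T_*$ is decided by a distance that coincides with the Agrachev--Sarychev distance of the analytic structure on $V$, or on any smaller analytic neighborhood. If necessary, I would first shrink $V$ to the box $V_1$ and run the whole argument with that restricted structure, so that only one distance ever appears.
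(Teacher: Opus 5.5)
Your proposal is correct and follows essentially the same route as the paper. Both arguments run as follows: invoke Agrachev--Sarychev for subanalyticity of the distance near $(0,0)$; restrict $d_Y(0,\cdot)$ to a compact box inside that neighborhood so that its graph is compact and hence globally subanalytic; shrink $h_*$ so that $\delta_hC$ lies in the box; and write $T_*$ as the inverse image of the strict epigraph under the polynomial map $(h,\xi)\mapsto\delta_h\xi$ (the paper uses $d_Y^2$ and $h^2$ instead of $d_Y$ and $h$, which is immaterial), while your localization paragraph spells out the paper's brief remark that the germ of $d_Y$ at the origin is that of a real-analytic sub-Riemannian distance — though since $d_Y$ is defined with trajectories confined to $V$, it already is the distance of the analytic structure on $V$, so that detour is only needed if you shrink the domain.
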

\begin{proof}
The polynomial frame $Y$ defines a real-analytic, two-generating
sub-Riemannian structure on $V$.  By standard arguments, the germ of $d_Y$ at the origin is the germ of the distance of a real-analytic sub-Riemannian manifold, so that there is a
neighborhood $W_0\times W_0 \subset V\times V$ of $(0,0)$ on which
$d_{Y}^2$ is subanalytic, see \cite[Theorem~4.2]{AgrachevSarychev1999}.  Pullback by the analytic map $z\mapsto(0,z)$ shows in
particular that $z\mapsto d_Y(0,z)^2$ is subanalytic on $W_0$. Choose a compact box \(K_0\subset W_0\) containing the origin in its
interior.  Since \(d_Y(0,\cdot)^2\) is continuous, its graph
\[
  \mathcal G_0:=
  \bigl\{(z,s)\in K_0\times\mathbb R:
          s=d_Y(0,z)^2\bigr\}
\]
is compact and subanalytic.  It is therefore globally subanalytic by
\cite[\S2.5(4)]{vdDMiller1996}.  The corresponding strict epigraph is
\[
  \mathcal E_0
  =\bigl\{(z,t)\in K_0\times\mathbb R:
       \text{there exists }s<t\text{ with }(z,s)\in\mathcal G_0\bigr\}.
\]
It is globally subanalytic by closure under finite intersections and
coordinate projections.  Decrease $h_*$ so that
\[
  \delta_hC\subset K_0,
  \qquad 0<h<h_*.
\]
Since the map
\[
  (h,\xi)\longmapsto(\delta_h\xi,h^2)
\]
is polynomial, we have
\[
  T_*=
  \bigl\{(h,\xi)\in(0,h_*)\times C:
  (\delta_h\xi,h^2)\in\mathcal E_0\bigr\}.
\]
Closure of globally subanalytic sets under definable inverse images and
finite intersections proves the claim.
\end{proof}
\begin{lemma}[Finite-jet ball comparison]
\label{lem:finite-jet-ball-comparison}
For $0<h<h_*$, set
\[
  T_h:=\{\xi\in C:(h,\xi)\in T_*\}.
\]
There exists $h_0\in(0,h_*)$ such that, for $0<h<h_0$,
$T_h=\{\xi\in\R^n:d_{Y,h}(0,\xi)<1\}$ and
\begin{equation}\label{eq:finite-jet-ball-comparison}
  |B_{x,h}\mathbin\triangle T_h|=o(h)
  \qquad\text{as }h\downarrow0.
\end{equation}
\end{lemma}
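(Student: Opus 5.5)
We prove the two assertions in turn: first that $T_h$ coincides with the $d_{Y,h}$-unit ball, then the $o(h)$ comparison. The second relies on the $O(h^N)$ closeness of the frames $X^{x,h}$ and $Y^h$, with $N\ge3$, combined with a stability estimate of the type already used in Proposition~\ref{prop:boundedness}.

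\emph{Identification of $T_h$.} By definition, $T_h=\{\xi\in C: d_{Y,h}(0,\xi)<1\}$, so it suffices to check that the $d_{Y,h}$-unit ball lies in $C$. Since $Y^h\to\widehat X_x$ in $C^1(\overline U_1)$ at rate $O(h)$, we may apply Lemma~\ref{lem:uniform-confinement} to the family $Y^h$ with the same localization data $U_0\Subset U_1$, $R$. Curves of $Y^h$-length less than $2$ issued from $0$ then stay in $U_1$, so $d_{Y,h}(0,\cdot)$ is computed by curves inside $\overline U_1$. Lemma~\ref{lem:app-distance-stability}, applied to $Y^h$ versus $\widehat X_x$, gives
\[
|d_{Y,h}(0,\xi)^2-\widehat d_x(0,\xi)^2|\le C'h .
\]
This is exactly the argument of Proposition~\ref{prop:boundedness}, with the frame expansion replaced by the $O(h)$ bound for $Y^h$. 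Hence the $d_{Y,h}$-unit ball lies in $\widehat B_x(1+C'h)\subset\widehat B_x(3)\subset\operatorname{int}C$ for small $h$. This proves the first claim and also yields the linear trapping $\widehat B_x(1-C'h)\subset T_h\subset\widehat B_x(1+C'h)$.

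\emph{Comparison of distances.} Apply Lemma~\ref{lem:app-distance-stability} once more, now to the pair of frames $X^{x,h}$ and $Y^h$ on $\overline U_1$. Both families satisfy the confinement hypotheses with common data. By \eqref{eq:finite-jet-frame-error}, their $C^2$ distance is at most $C_Nh^N$. I expect the stability lemma to deliver a bound of the form
\[
\sup_{\xi\in C}|d_{x,h}(0,\xi)^2-d_{Y,h}(0,\xi)^2|\le C\,\|X^{x,h}-Y^h\|_{C^2}^{\,\theta}
\]
for some H\"older exponent $\theta>0$. The exponent is $\theta=1$ if the lemma is Lipschitz, as its use in Proposition~\ref{prop:boundedness} suggests, but a square-root loss is conceivable in step two. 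In either case, once $N$ is large compared with $1/\theta$, this gives a uniform bound $\eta(h)=O(h^{N\theta})=o(h)$. Since $N\ge3$ was arbitrary, we may enlarge $N$ at the start if needed. Consequently
\[
\{d_{Y,h}(0,\cdot)^2<1-\eta(h)\}\subset B_{x,h}\subset\{d_{Y,h}(0,\cdot)^2<1+\eta(h)\},
\]
and symmetrically with the roles of $B_{x,h}$ and $T_h$ exchanged. Therefore $B_{x,h}\mathbin\triangle T_h$ is contained in the shell $\{\xi: |d_{Y,h}(0,\xi)^2-1|\le\eta(h)\}$.

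\emph{Measure of the shell (main obstacle).} It remains to show that this shell has measure $o(h)$. It cannot be handled by homogeneity directly, because $d_{Y,h}$ is not homogeneous. Instead we pass to the tangent distance using the quadratic comparison $|d_{Y,h}^2-\widehat d_x^2|\le C'h$. Any point of the shell satisfies $|\widehat d_x(0,\xi)^2-1|\le C'h+\eta(h)$. This places the shell inside a tangent annulus of measure $O(h)$ by homogeneity, which is not good enough.

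The plan to beat this is to use the exact $\delta$-structure of $d_{Y,h}$ itself: the identity
\[
d_{Y,h}(0,\xi)=h^{-1}d_Y(0,\delta_h\xi)
\]
shows that the shell is $\delta_{1/h}$ of a set $\{z:(1-\eta)h^2\le d_Y(0,z)^2\le(1+\eta)h^2\}$. Its measure equals $h^{-Q}\bigl(|B_Y(0,h\sqrt{1+\eta})|-|B_Y(0,h\sqrt{1-\eta})|\bigr)$. The function $r\mapsto r^{-Q}|B_Y(0,r)|$ is bounded and, by Theorem~\ref{thm:tame-integration} and Lemma~\ref{lem:model-family-subanalytic}, constructible. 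Moreover it equals $|\widehat B_x|+O(r)$ by the trapping above. Lemma~\ref{lem:tame-lipschitz} then makes it Lipschitz near $0$, so $r^Q\mapsto|B_Y(0,r)|$ has derivative $O(r^{Q-1})$. Hence the shell has measure $O(\eta(h))=o(h)$, which completes the argument. The delicate point is precisely this use of tameness of the model volume to convert a radial error $\eta(h)=o(h)$ into a volume error $o(h)$.
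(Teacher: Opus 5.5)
Your proof is correct and follows essentially the same route as the paper. The $O(h^N)$ squared-distance comparison from Lemma~\ref{lem:app-distance-stability} and \eqref{eq:finite-jet-frame-error} sandwiches both $B_{x,h}$ and $T_h$ between two $d_{Y,h}$-balls of radii $\sqrt{1\pm Ch^N}$. The shell is then bounded by $O(h^N)$ using the exact scaling $|\{d_{Y,h}(0,\cdot)<r\}|=r^Q|T_{rh}|$ together with the tame Lipschitz property of the constructible function $h\mapsto|T_h|$ (Lemma~\ref{lem:tame-lipschitz}); since the stability lemma is Lipschitz, your H\"older hedge is unnecessary.

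One point needs tightening: the containment of the $d_{Y,h}$-unit ball in $C$. Your $C'h$ estimate holds only on $C$, so you need a first-exit argument along curves of length less than $1$: a curve reaching $\partial C$ would have $d_{Y,h}^2>9-C'h>1$ there, which is impossible. The paper instead obtains this containment directly by a Gronwall comparison of $Y^h$- and $\widehat X_x$-trajectories.
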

\begin{proof}
Repeating the argument for the proof of Lemma \ref{lem:uniform-confinement}, point 4,  we see that for every
$\xi\in C^+$,
\begin{equation}\label{eq:model-confined-distance}
  d_{Y,h}(0,\xi)=d_{Y^h,U_1}(0,\xi).
\end{equation}
The same first-exit argument applies to every control of norm less than two.
Comparing the corresponding $Y^h$- and $\widehat X_x$-trajectories by
Gronwall's inequality shows, after a further decrease of $h_*$, that for
every $0<h<h_*$ the centered radius-two model ball is contained in
$\widehat B_x(3)$ and hence in
$\operatorname{int}C$.  It follows from the definition of $d_{Y,h}$ that
\[
  \{\xi\in\R^n:d_{Y,h}(0,\xi)<1\}=\{\xi\in C:d_{Y,h}(0,\xi)<1\}.
\]
Theorem~\ref{thm:tame-integration}, applied to the indicator of $T_*$
extended by zero outside $C$, therefore shows that
$h\mapsto|T_h|$ is constructible on $(0,h_*)$.
Since $Y^h=\widehat X_x+O(h)$ in $C^1(\overline U_1)$,
Lemma~\ref{lem:app-distance-stability}, applied with
$(C,C')=(C,C^+)$ gives
\[
  \sup_{\xi\in C}
  |d_{Y,h}(0,\xi)^2-\widehat d_x(0,\xi)^2|\le Ch.
\]
As in Proposition~\ref{prop:boundedness}, this implies
\[
  \widehat B_x(1-Ch)
  \subset\{\xi:d_{Y,h}(0,\xi)<1\}
  \subset\widehat B_x(1+Ch),
\]
and hence
\[
  |T_h|=|\widehat B_x|+O(h).
\]
Lemma~\ref{lem:tame-lipschitz} therefore makes $h\mapsto|T_h|$ Lipschitz
near $0$.
Let $\varepsilon_0$ be the smallness threshold in
Lemma~\ref{lem:app-distance-stability}.  Choose $L<\infty$ and $h_0>0$ so
that $2h_0<h_*$, the function $h\mapsto|T_h|$ is $L$-Lipschitz on
$(0,2h_0)$, and
\[
  C_Nh^N\le\min\{\tfrac12,\varepsilon_0\},
  \qquad 0<h<h_0.
\]
By Lemma~\ref{lem:app-distance-stability} and
\eqref{eq:finite-jet-frame-error}, after enlarging the constant if necessary,
\[
  \sup_{\xi\in C}
  |d_{x,h}(0,\xi)^2-d_{Y,h}(0,\xi)^2|
  \le Ch^N.
\]
After decreasing $h_0$ so that $Ch^N<1$, this gives
\[
  \{\xi:d_{Y,h}(0,\xi)<\sqrt{1-Ch^N}\}
  \subset B_{x,h}
  \subset
  \{\xi:d_{Y,h}(0,\xi)<\sqrt{1+Ch^N}\}.
\]
For $r$ in a neighborhood of $1$ and $rh<h_*$, the definition of
$d_{Y,h}$ gives the exact scaling identities
\[
  d_{Y,h}(0,\delta_r\xi)=r\,d_{Y,rh}(0,\xi),
  \qquad
  |\{\xi:d_{Y,h}(0,\xi)<r\}|=r^Q|T_{rh}|.
\]
All balls occurring here are contained in $C$ by the radius-two confinement
proved above.  Since both $B_{x,h}$ and $T_h$ lie between the two balls in
the preceding inclusions, the boundedness and Lipschitz continuity of
$h\mapsto|T_h|$
give
\[
\begin{split}
  |B_{x,h}\mathbin\triangle T_h|
  &\le (1+Ch^N)^{Q/2}
      \left|T_{h\sqrt{1+Ch^N}}\right|\\
  &\quad-(1-Ch^N)^{Q/2}
      \left|T_{h\sqrt{1-Ch^N}}\right|\\
  &\le Ch^N
      +Lh\bigl(\sqrt{1+Ch^N}-\sqrt{1-Ch^N}\bigr)\\
  &=O(h^N)=o(h).
\end{split}
\]
This is \eqref{eq:finite-jet-ball-comparison}.
\end{proof}
\begin{proof}[Proof of Theorem~\ref{thm:main}]
Fix $x\in M$, choose a privileged-coordinate patch $O\ni x$, and let
$1\le i\le k$.  Apply the construction above at $x$, and let
$C,T_*,h_0$, and $T_h$ be the resulting objects, with $h_0$ and $T_h$ as
in Lemma~\ref{lem:finite-jet-ball-comparison}.  By
Lemma~\ref{lem:model-family-subanalytic}, $T_*$ is globally subanalytic.
Applying Theorem~\ref{thm:tame-integration} to
$\mathbf 1_{T_*}(h,\xi)\xi_i$, extended by zero outside $C$, shows that
\[
  \widetilde m_i(h):=\int_{T_h}\xi_i\,\dd\xi
\]
is constructible.  Since $C$ is bounded,
\eqref{eq:finite-jet-ball-comparison} gives
$m_i(x,h)-\widetilde m_i(h)=o(h)$.  Proposition~\ref{prop:boundedness}
then implies that the constructible function $\widetilde m_i(h)/h$ is
bounded near the origin.  By Lemma~\ref{lem:definable-calculus}, this
quotient has a finite limit.  The $o(h)$ comparison transfers the limit to
$m_i(x,h)/h$ and proves \eqref{eq:Lambda-def}.  Finally,
Lemma~\ref{lem:unweighted-first-moments} gives \eqref{eq:Gamma-formula}, and
Theorem~\ref{thm:general-first-moment} gives \eqref{eq:main-limit}.
\end{proof}
\subsection{Local \texorpdfstring{$L^p$}{Lp} convergence}
\begin{lemma}[Lipschitz regularity of the coefficients]
\label{lem:tangent-coefficients-Lipschitz}
On a privileged-coordinate patch of step at most two, the functions
$x\mapsto\widehat V_x$ and $x\mapsto M_{ij}(x)$ are locally Lipschitz.
\end{lemma}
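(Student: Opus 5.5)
The plan is to compare the tangent balls at two nearby base points $x,y$ in the same patch. We show that the squared tangent distances differ by $O(|x-y|)$ uniformly on a fixed coordinate box. Then we run the same trapping and symmetric-difference argument used in Lemma~\ref{lem:intrinsic-moments} and Proposition~\ref{prop:boundedness}. Because $J_x(0)$ is smooth in $x$, it suffices to show that $x\mapsto|\widehat B_x|$ and $x\mapsto\int_{\widehat B_x}\xi_i\xi_j\,\dd\xi$ are locally Lipschitz.

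First, by Proposition~\ref{prop:frame-expansion}, the coefficients of the tangent frame $\widehat X_{i,x}=X_i^{x,0}$ extend smoothly in $(x,\xi)$. Concretely, they are the weighted-degree $w_a-1$ Taylor coefficients of $f^i_a(x,\cdot)$, and these depend smoothly on $x$. Hence, for $K\Subset O$ and a compact box $\mathcal C^+$,
\[
  \|\widehat X_{i,x}-\widehat X_{i,y}\|_{C^1(\mathcal C^+)}\le C|x-y|,\qquad x,y\in K.
\]
Next, we apply Lemma~\ref{lem:uniform-confinement}, which gives the common localization data for the two-generating step-two frames $\widehat X_x$, $x\in K$. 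We then apply Lemma~\ref{lem:app-distance-stability}, using the frame $\widehat X_y$ as a perturbation of $\widehat X_x$ of size $\varepsilon=C|x-y|$, in the same way the paper uses it with $X^{x,h}$ versus $\widehat X_x$. This should yield
\[
  \sup_{\xi\in\mathcal C}\bigl|\widehat d_x(0,\xi)^2-\widehat d_y(0,\xi)^2\bigr|\le C_0|x-y|
\]
whenever $|x-y|$ is below the smallness threshold. The main obstacle lies in this step. We need to check that the stability lemma is stated, or can be applied, for two arbitrary nearby frames from the uniformly two-generating family, and not only for the rescaled family with parameter $h$. Its hypotheses are $C^1$ closeness and common confinement, both of which hold uniformly on $K$. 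I therefore expect it to apply verbatim with $h$ replaced by $|x-y|$.

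With this estimate in hand, the argument of Proposition~\ref{prop:boundedness} gives
\[
  \widehat B_x(1-C|x-y|)\subset\widehat B_y\subset\widehat B_x(1+C|x-y|).
\]
Homogeneity then yields
\[
  |\widehat B_x\mathbin\triangle\widehat B_y|\le\bigl((1+C|x-y|)^Q-(1-C|x-y|)^Q\bigr)|\widehat B_x|\le C'|x-y|,
\]
using the uniform bound on $|\widehat B_x|$ from the ball-box estimate. Since $\xi_i\xi_j$ is bounded on $\mathcal C$, both $|\widehat B_x|$ and $\int_{\widehat B_x}\xi_i\xi_j\,\dd\xi$ are Lipschitz on a small neighborhood of each point of $K$. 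Multiplying by the smooth positive factor $J_x(0)$ gives local Lipschitz continuity of $\widehat V_x$ and $M_{ij}(x)$. Because $K$ is compact, a standard covering argument upgrades the local estimates to a Lipschitz bound on $K$: for pairs $x,y$ that are far apart, the difference is simply bounded by twice the sup.
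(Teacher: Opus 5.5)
Your proposal is correct and follows essentially the same route as the paper: the tangent frames are $O(|x-y|)$-close in $C^1$, which gives the squared-distance stability estimate of Lemma~\ref{lem:app-distance-stability}. From there you get trapping of $\widehat B_y$ between dilates of $\widehat B_x$, a homogeneous symmetric-difference bound, and the uniform bounds for far-apart pairs. The concern you flag is resolved as you expect: that lemma is stated for an arbitrary compact parameter space, and the paper applies it with $\Theta=K\times K$, $Y^{(x,y)}=\widehat X_x$, $Z^{(x,y)}=\widehat X_y$; the paper traps with radii $\sqrt{1+\varepsilon}$ rather than $1\pm C\varepsilon$, which makes no difference.
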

\begin{proof}
Fix $K\Subset O$, where $O$ is the coordinate patch, and choose compact
boxes $\mathcal C,\mathcal C^+$ such that $\mathcal C$ contains
$\widehat B_x(2)$ for every $x\in K$ and
$\mathcal C\cup\{0\}\subset\operatorname{int}\mathcal C^+$.
Smooth dependence on $x$, uniform two-generation, and
Lemma~\ref{lem:uniform-confinement}, applied with $C=\mathcal C^+$,
provide common centered localization data.
Apply Lemma~\ref{lem:app-distance-stability} with
$(C,C')=(\mathcal C,\mathcal C^+)$ and parameter space
$\Theta=K\times K$, $Y^{(x,y)}:=\widehat X_x$ and
$Z^{(x,y)}:=\widehat X_y$, whose $C^1$-distance is $O(|x-y|)$.  Its
estimate \eqref{eq:frame-distance-stability} requires
$\max_i\|\widehat X_{i,x}-\widehat X_{i,y}\|_{C^1(\overline{U_1})}
\le\varepsilon_0$ and therefore applies once $|x-y|$ is small; it yields
\[
  \sup_{\xi\in\mathcal C}
  \left|
  \widehat d_x(0,\xi)^2-\widehat d_y(0,\xi)^2
  \right|\le C|x-y|.
\]
Put $\varepsilon=C|x-y|$.  The last estimate and homogeneity imply
\[
  \widehat B_x\subset
  \widehat B_y\bigl(\sqrt{1+\varepsilon}\bigr),
  \qquad
  \widehat B_y\subset
  \widehat B_x\bigl(\sqrt{1+\varepsilon}\bigr).
\]
The symmetric difference is therefore contained in two homogeneous annuli
of total volume at most
\[
  C\bigl((1+\varepsilon)^{Q/2}-1\bigr) \le C'|x-y|.
\]
The integrands $1$ and $\xi_i\xi_j$ are uniformly bounded on $\mathcal C$,
so their integrals over the tangent balls vary by $O(|x-y|)$.  Since
$x\mapsto J_x(0)$ is smooth, \eqref{eq:Vx} and
\eqref{eq:Mij-general} prove the claim when $|x-y|$ is small.  For pairs
with $|x-y|$ bounded below, the same estimate follows, after enlarging the
constant, from the uniform bounds in Lemma~\ref{lem:intrinsic-moments}.
\end{proof}
\begin{corollary}[Local convergence and divergence form]
\label{cor:step-two-consequences}
Under the assumptions of Theorem~\ref{thm:main}, the family $(A_h)$ is
locally bounded on test functions, and
\[
  A_hf\longrightarrow\mathcal A_\mu f
  \quad\text{in }L^p_{\mathrm{loc}}(M,\mu),
  \qquad 1\le p<\infty.
\]
Moreover,
\begin{equation}\label{eq:main-divergence}
  \widehat V\,\mathcal A_\mu f
  =\frac12\operatorname{div}_{\mu}\bigl(M^\sharp(df)\bigr)
  =\frac12\operatorname{div}_{\mu}\left(
    \sum_{i,j=1}^kM_{ij}(X_jf)X_i
  \right)
\end{equation}
in distributions and on every privileged-coordinate patch,
\begin{equation}\label{eq:main-coefficients-ae}
  \Gamma_j
  =\frac12\sum_{i=1}^k
  \bigl(X_iM_{ij}+M_{ij}\operatorname{div}_\mu X_i\bigr)
  \quad\text{almost everywhere}.
\end{equation}
\end{corollary}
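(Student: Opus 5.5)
The proof has three parts: a uniform local bound, then $L^p_{\mathrm{loc}}$ convergence by dominated convergence, and finally the divergence identities from Corollary~\ref{cor:pointwise-detailed-balance}.

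\emph{Local boundedness.} Fix $K\Subset M$ and $f\in C_c^\infty(M)$, and cover $K$ by finitely many compact subsets of privileged-coordinate patches. On each piece we use the expansion \eqref{eq:expanded-general} from the proof of Theorem~\ref{thm:general-first-moment}. The Taylor remainder there is $O(h)$ uniformly in $x$ on compacts, as noted in that proof. The first line needs the weighted first moments $h^{-1}\int_{B_{x,h}}\xi_iJ_x(\delta_h\xi)\,\dd\xi$. By the expansion \eqref{eq:J-expansion-general}, which is uniform in $x\in K$, these equal
\[
J_x(0)\,h^{-1}m_i(x,h)+O(1).
\]
Proposition~\ref{prop:boundedness} bounds $h^{-1}m_i(x,h)$ uniformly for $x\in K$ and $0<h<h_K$. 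The second and third lines are uniformly bounded because the balls lie in a common compact box and $J$ is bounded there. Finally, $D_h\to\widehat V$ uniformly on $K$ and $\widehat V$ is bounded below (Lemma~\ref{lem:intrinsic-moments}), so $\sup_{x\in K,\,0<h<h_0}|A_hf(x)|<\infty$.

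\emph{$L^p_{\mathrm{loc}}$ convergence.} Theorem~\ref{thm:main} gives $A_hf(x)\to\mathcal A_\mu f(x)$ at every point. The limit inherits the local bound, and $\mu$ is finite on compact sets. Dominated convergence then gives $\int_K|A_hf-\mathcal A_\mu f|^p\,\dd\mu\to0$ for every $1\le p<\infty$.

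\emph{Divergence form.} Since the family is locally bounded and converges pointwise, Corollary~\ref{cor:pointwise-detailed-balance} yields \eqref{eq:main-divergence} directly; the second equality there is just the definition of $M^\sharp$ on a patch. For \eqref{eq:main-coefficients-ae} we expand the right-hand side. Lemma~\ref{lem:tangent-coefficients-Lipschitz} makes the $M_{ij}$ locally Lipschitz, so by Rademacher (a Lipschitz function in coordinates has bounded weak derivatives) $X_iM_{ij}$ exists a.e.\ and is locally bounded. The distributional Leibniz rule $\operatorname{div}_\mu(\phi Y)=Y\phi+\phi\operatorname{div}_\mu Y$ holds for Lipschitz $\phi$ and smooth $Y$. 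It gives the a.e.\ function
\[
\tfrac12\sum_{i,j}\Bigl(M_{ij}X_iX_jf+(X_iM_{ij})X_jf+M_{ij}(X_jf)\operatorname{div}_\mu X_i\Bigr).
\]
Both sides of \eqref{eq:main-divergence} are then $L^\infty_{\mathrm{loc}}$ functions, so they agree a.e. On the left we substitute \eqref{eq:general-limit-operator}; the second-order terms cancel, using the symmetry of $M$.

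The first-order terms remain. Test $f$ against functions $f_1,\dots,f_k$ whose horizontal differentials at every point of a small open set $U$ are the dual basis, i.e.\ $X_if_j=\delta_{ij}$ on $U$. Functions coinciding with the coordinates in a fixed chart only satisfy this at its center, so we instead solve $\mathrm{d}f_j|_{\mathcal D}=\theta^j$, where $(\theta^j)$ is the coframe dual to $(X_i)$; this needs $\mathrm{d}f_j$ to vanish on the non-horizontal directions. Alternatively, use a countable family of test functions and compare coefficients pointwise a.e. This isolation gives $\Gamma_j=\frac12\sum_i(X_iM_{ij}+M_{ij}\operatorname{div}_\mu X_i)$ a.e.

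The main obstacle is this last coefficient extraction, because $\Gamma_j$ need not be measurable a priori. I would handle it by noting that $\Gamma_j(x)=\lim_{h}hD_h(x)A_hf_j(x)\cdot h^{-1}$, via \eqref{eq:coordinate-test-exact-identity}, is a pointwise limit of continuous functions and hence measurable. I would then extract the coefficients using finitely many test functions with linearly independent horizontal differentials on a small open set, for instance $f_j=z_j\circ\Phi_{x_0}^{-1}$ for a fixed center $x_0$. Their horizontal differentials form an invertible matrix near $x_0$, and the a.e.\ linear system can be inverted to recover the $\Gamma_j$.
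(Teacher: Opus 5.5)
Your proposal is correct and follows the paper's proof essentially step by step. The steps are: the moment bound of Proposition~\ref{prop:boundedness} with the density and Taylor expansions for local boundedness; dominated convergence for $L^p_{\mathrm{loc}}$; Corollary~\ref{cor:pointwise-detailed-balance} for the divergence identity; and the weak Leibniz rule with inversion of $P=(X_jf_r)$, for test functions with independent horizontal differentials near a point, to get $c=0$ a.e. Two small points: dominated convergence needs Borel measurability of $x\mapsto A_hf(x)$, which follows from openness of $\{(x,q):d(x,q)<h\}$; and your measurability argument for $\Gamma_j$ via \eqref{eq:coordinate-test-exact-identity} fails for a fixed test family (that identity holds only at the chart's center), but it is unnecessary, since $Pc=0$ only has to hold off a null set.
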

\begin{proof}
The moment bound in Proposition~\ref{prop:boundedness}, the density
expansion \eqref{eq:J-expansion-general}, and Taylor's formula
\eqref{eq:Taylor-general} bound $A_hf$ uniformly on compact sets for small
$h$.  Since
\[
  \{(x,q)\in M\times M:d(x,q)<h\}
\]
is open, parameter integration shows that $x\mapsto A_hf(x)$ is Borel
measurable.  Pointwise convergence and dominated convergence give the
local $L^p$ convergence.  Corollary~\ref{cor:pointwise-detailed-balance}
gives \eqref{eq:main-divergence}.
To identify the coefficients, work on
an adapted-frame patch.  Near any point choose
$f_1,\ldots,f_k\in C_c^\infty(M)$ so that the matrix
$P=(X_jf_r)_{r,j\le k}$ is invertible.  The local boundedness just proved,
together with \eqref{eq:general-limit-operator}, shows by solving the
resulting system that $\Gamma_1,\ldots,\Gamma_k$ are locally bounded.
By Lemma~\ref{lem:tangent-coefficients-Lipschitz}, the weak Leibniz rule
applies to $M_{ij}$.  Expanding the divergence in
\eqref{eq:main-divergence} and comparing it with
\eqref{eq:general-limit-operator} gives
\[
 \sum_{j=1}^k c_jX_jf=0\quad\text{in distributions},\qquad
 c_j:=\Gamma_j-\frac12\sum_{i=1}^k
 \bigl(X_iM_{ij}+M_{ij}\operatorname{div}_\mu X_i\bigr).
\]
Apply this identity to $f_1,\ldots,f_k$.  Outside a common negligible set
it says $Pc=0$, hence $c=0$.  A countable patch cover proves
\eqref{eq:main-coefficients-ae}.
\end{proof}
\section{Further directions}
We close the paper  with two possible research directions.
\subsection{Higher-order asymptotics}
Higher-order asymptotics are interesting and should reveal curvature
invariants. The Riemannian case provides a useful insight.  Integrating the
small-sphere Pizzetti formula \cite[Theorem~2]{Willmore1980} in the radial
variable gives a fourth-order expansion for averages over geodesic balls;
see also \cite{GrayWillmore1982}.  More precisely, let $(M^n,g)$ be smooth and
$\mu=\vol_g$.  For the operator
\[
  A_hf(x):=\frac{1}{h^{2}}\fint_{B_g(x,h)}
  \bigl(f(y)-f(x)\bigr)\,\dd\vol_g(y),
\]
one has, as $h\downarrow0$,
\[
  A_hf(x)
  =\frac{1}{2(n+2)}\Delta_gf(x)
  +h^{2}\mathcal P^{\mathrm{Riem}}_{4}f(x)
  +O(h^{4}),
\]
where $\Delta_g=\operatorname{tr}_g\nabla^2$ and
\[
\begin{split}
  \mathcal P^{\mathrm{Riem}}_{4}f
  ={}&\frac{1}{8(n+2)(n+4)}\Delta_g^{2}f
  -\frac{1}{12(n+2)(n+4)}
  \bigl\langle\Ric_g,\nabla^{2}f\bigr\rangle_g\\
  &-\frac{1}{8(n+2)(n+4)}
  \bigl\langle\nabla\Scal_g,\nabla f\bigr\rangle_g
  +\frac{\Scal_g}{6(n+2)^{2}(n+4)}\Delta_gf .
\end{split}
\]
Extending those asymptotic expansions to sub-Riemannian manifolds of step two appears to be beyond the reach of the current methods. However, in dimension three, the contact normal form of
\cite{BarilariBeschastnyiLerario2020} suggests a  concrete expansion which involves  a universal weight-four operator built
from the intrinsic sub-Laplacian, the Reeb field, contact torsion and
curvature, and their horizontal derivatives; see
\cite{Agrachev1995,Agrachev1996} and \cite[Chapter~17]{ABB2020} for background on three-dimensional contact sub-Riemannian geometry. In that case, a formal argument (that we do not justify here) gives for
Popp measure,
\[
  A_h f
  = c_0\,\Delta_{\mathrm P} f + h^{2}\mathcal P_4 f + o(h^{2}),
\]
with no term of order $h$, where
\[
\mathcal P_4 f =
  c_1\,\Delta_{\mathrm P}^{2} f
  + c_2\,X_0^{2} f
  + c_3\,\operatorname{div}_{\mathrm P}\bigl(\kappa\nabla_H f\bigr)
  - c_4\,\langle\nabla_H\kappa,\nabla_H f\rangle
  + c_5\,\operatorname{div}_{\mathrm P}\bigl((J\tau)\nabla_H f\bigr).
\]
Here \(\Delta_{\mathrm P}\) is the intrinsic sub-Laplacian, $X_0$ is the
Reeb field, $\tau=T^{\nabla}(X_0,\cdot)$ is the torsion of the canonical
contact connection, $J$ is the complex structure, $\kappa$ is the Agrachev
curvature parameter,  see
\cite{Agrachev1996}, and the constants $c_i$ are universal.
We stress that at present this three-dimensional formula should be regarded as conjectural but gives a motivation to pursue further this direction.

\subsection{Beyond step two}
\label{subsec:beyond-step-two}
Two ingredients of the proof of Theorem~\ref{thm:main} used the  step two assumption.
\begin{enumerate}
\item \emph{Linear trapping.}
Proposition~\ref{prop:boundedness} uses Lipschitz stability of the squared
distance. Appendix~\ref{app:uniform-lipschitz} proves the required
compact-family estimate by combining a nonsmooth subgradient criterion with
Rifford's quantitative approximate Goh estimate.  Large proximal
subgradients give normal multipliers of index zero; approximate Goh and
two-generation then bound their terminal covectors uniformly.  In higher
step, the Goh condition no longer forces the multiplier to vanish. Already
in rank two and step at least three, regular abnormal extremals may be
locally minimizing for every choice of metric \cite{LiuSussmann1995}, and
some are rigid \cite{BryantHsu1993}.  No general Lipschitz estimate for the
squared distance is available near such curves.
\item \emph{Subanalyticity of the distance.}
Analyticity of the frame alone does not guarantee subanalytic distance
spheres in the presence of abnormal minimizers.  In the flat Martinet
model, for instance, small sub-Riemannian spheres are not subanalytic
\cite{ABCK1997}.  Without a definable
model family, the comparison moments in
Lemma~\ref{lem:finite-jet-ball-comparison} need not be constructible, so
o-minimality no longer rules out oscillation using our argument.
\end{enumerate}
Therefore our method does not readily extend to arbitrary higher steps.
For real-analytic medium-fat distributions, Agrachev and Sarychev prove
both subanalyticity of the distance off the diagonal and absence of
strictly abnormal weak minimizers \cite{AgrachevSarychev1999}.  These conclusions do not by
themselves provide the uniform subgradient estimate used below:
the Goh condition annihilates only
$\D+[\D,\D]$, which need not equal the tangent bundle for a medium-fat
structure.  Thus the present distance-stability argument does not directly
extend beyond step two.

\vspace{0.1cm}

The extension to higher-step structures remains an open question.

\appendix
\section{Uniform distance stability}
\label{app:uniform-lipschitz}
This appendix collects the compact-family estimates used in
Section~\ref{sec:step-two} and records their uniform dependence on the
frame.  The squared-distance estimate of Lemma \ref{lem:app-distance-stability} is obtained by bounding proximal
subgradients of the half squared distance with the quantitative
approximate Goh estimate of \cite[Proposition~3.7]{Rifford2023}, and then
using the nonsmooth Lipschitz criterion from
\cite[Propositions~2.2 and~2.4]{Rifford2023}.

Let $\Theta$ be a compact subset of a finite-dimensional smooth manifold. Consider bounded open boxes $U_0\Subset U_1\subset\R^n$, and a family of pointwise linearly independent $\mathbb{R}^n$-vector fields
\[
  W^\lambda=(W_1^\lambda,\ldots,W_k^\lambda)
\]
such that the mapping $(\lambda,x) \mapsto W^\lambda(x)$ is jointly smooth near
$\Theta\times\overline{U_1}$. For given $p \in U_1$, $\lambda \in \Theta$, and $u\in L^2([0,1];\R^k)$, write
$\gamma_{p,u}^\lambda$ for the solution of the Cauchy problem
\[
  \dot\gamma=\sum_{i=1}^k u_iW_i^\lambda(\gamma) \quad \text{a.e.~on $[0,1]$},
  \qquad \gamma(0)=p.
\]
The endpoint map associated with $p$ and $\lambda$ maps any control $u \in L^2([0,1];\R^k)$ to $$\operatorname{End}_p^\lambda(u):=\gamma_{p,u}^\lambda(1).$$ The control distance associated
with $W^\lambda$ and obtained by restricting horizontal curves to $U_1$ is given by
\[
  d_{W^\lambda}(p,q)^2
  :=\inf\left\{\|u\|_{L^2}^2:u\in L^2([0,1];\R^k) \text{ such that }
    \operatorname{End}_p^\lambda(u)=q \text{ and }
    \gamma_{p,u}^{\lambda}([0,1])\subset U_1\right\}
\]
for any $p,q \in U_1$.

\begin{definition}\label{def:uniform-generating}
The family $W^\lambda$ is \emph{uniformly two-generating} on a set $A$ if,
for every $(\lambda,x)\in\Theta\times A$, the linear map
\[
  \mathcal B_{\lambda,x}(a,b)
  :=\sum_i a_iW_i^\lambda(x)
  +\sum_{i<j}b_{ij}[W_i^\lambda,W_j^\lambda](x)
\]
is onto and admits a right inverse whose norm is bounded independently of
$(\lambda,x)$.
\end{definition}
\begin{lemma}[Uniform accessibility]
\label{lem:uniform-step-two-ball-box}
Let $W^\lambda$ be jointly smooth and uniformly two-generating on
$\overline{U_1}$.  If $V,V'$ are compact and
$V\Subset\operatorname{int}V'\Subset U_1$, there are $c\ge1$ and
$r_0>0$, uniform in $\lambda$, such that
whenever $x,y\in V$ and $|x-y|<r_0$, they can be joined inside $V'$ with
control norm at most $c|x-y|^{1/2}$.
\end{lemma}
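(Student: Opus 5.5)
The plan is the standard uniform step-two ball-box argument: build a local diffeomorphism out of compositions of flows, one that realizes brackets to second order, and then invert it quantitatively with constants uniform in $(\lambda,x)$.

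\emph{Step 1: the approximate bracket map.} For $(\lambda,x)\in\Theta\times V$ and $(a,b)\in\R^k\times\R^{k(k-1)/2}$, I would define
\[
  \Psi_{\lambda,x}(a,b):=\Bigl(\prod_{i<j}\bigl[e^{\sqrt{b_{ij}}W_i^\lambda},e^{\sqrt{b_{ij}}W_j^\lambda}\bigr]\Bigr)\circ e^{\sum_ia_iW_i^\lambda}(x).
\]
Here $[e^{sX},e^{sY}]:=e^{-sY}e^{-sX}e^{sY}e^{sX}$, and for negative $b_{ij}$ one uses $\sqrt{|b_{ij}|}$ with $i,j$ swapped. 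The commutator of flows satisfies $[e^{sX},e^{sY}](p)=p+s^2[X,Y](p)+O(s^3)$, with the remainder controlled by $C^2$ norms of the fields. Joint smoothness near $\Theta\times\overline U_1$ then gives
\[
  \Psi_{\lambda,x}(a,b)=x+\mathcal B_{\lambda,x}(a,b)+O\bigl(|a|^2+|b|^{3/2}\bigr),
\]
uniformly in $(\lambda,x)$. Moreover, every concatenated horizontal curve realizing $\Psi_{\lambda,x}(a,b)$ has control norm at most $c(|a|+|b|^{1/2})$, after reparametrizing it on $[0,1]$.

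\emph{Step 2: solving $\Psi_{\lambda,x}(a,b)=y$.} Because of the $|b|^{3/2}$ term, $\Psi$ is not $C^1$ at $b=0$, so the inverse function theorem does not apply directly. Instead I would use a fixed-point or degree argument. Let $R_{\lambda,x}$ be the uniformly bounded right inverse of $\mathcal B_{\lambda,x}$ from Definition~\ref{def:uniform-generating}. Given $v=y-x$ with $|v|=\rho$ small, I would solve $(a,b)=R_{\lambda,x}\bigl(v-E(a,b)\bigr)$, where $E=\Psi-x-\mathcal B$ is the nonlinear error, on the anisotropic box $\{|a|\le 2\|R\|\rho,\ |b|\le 2\|R\|\rho\}$. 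On that box $|E|\le C(\rho^2+\rho^{3/2})\le \rho/2$ once $\rho<r_0$. The map is continuous, so Brouwer's theorem gives a fixed point, and hence a solution. The resulting controls have norm at most $c(\rho+\rho^{1/2})\le c'|x-y|^{1/2}$.

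\emph{Step 3: confinement.} Each trajectory has length $O(|x-y|^{1/2})$, so it stays within distance $C r_0^{1/2}$ of $x$, measured by the sup of the fields times the length. Choosing $r_0$ with $C r_0^{1/2}<\operatorname{dist}(V,\partial V')$ keeps every curve inside $V'$, and the flows remain defined in $U_1$. All constants depend only on $C^2$ bounds of $W^\lambda$ on $\Theta\times\overline U_1$ and on the right-inverse bound, so they are uniform in $\lambda$.

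The main obstacle is Step 2: making the error estimate of the commutator expansion uniform, and handling the sign and square-root structure so that the fixed-point map is well defined and continuous. If the square-root parametrization proves awkward, the fallback is to replace $b_{ij}$ by $s_{ij}=\operatorname{sign}(b_{ij})|b_{ij}|^{1/2}$ and apply the Brouwer argument in the variables $(a,s)$ directly.
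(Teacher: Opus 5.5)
Your proof is correct, but it takes a different route from the paper's.

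\textbf{The paper's route.} The paper does not build the joining curves itself. For each fixed $\lambda$ it invokes the Nagel--Stein--Wainger ball-box theorem for the weighted family consisting of the $W_i^\lambda$ (degree one) and the $[W_i^\lambda,W_j^\lambda]$ (degree two). It then gets uniformity in $\lambda$ by observing that the constants in that construction depend only on finitely many seminorms of the fields on $V'$ and on the right-inverse bound of Definition~\ref{def:uniform-generating}. A final compactness and finite-cover step over $\Theta\times V$ produces common constants.

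\textbf{Your route.} You prove directly the one-sided estimate the lemma actually needs. The square-root commutator parametrization gives
\[
\Psi_{\lambda,x}(a,b)=x+\mathcal B_{\lambda,x}(a,b)+O\bigl(|a|^2+|b|^{3/2}\bigr).
\]
The cross term $O(|a||b|)$, coming from evaluating $[W_i^\lambda,W_j^\lambda]$ at the point displaced by the $a$-flow, is absorbed into this error. Because you only aim for the coarse $|x-y|^{1/2}$ bound, an isotropic box of size $\rho$ in $(a,b)$ suffices. On that box the error is $O(\rho^{3/2})\ll\rho$, so Brouwer's theorem replaces the inverse function theorem that fails at $b=0$. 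Only existence of a solution is needed, not uniqueness or continuity of the inverse.

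\textbf{What each approach buys.} Your constants $r_0$ and $c$ are explicit in terms of uniform $C^2$ bounds of the fields on $\Theta\times\overline{U_1}$, $\sup_{\lambda,x}\|R_{\lambda,x}\|$, and $\operatorname{dist}(V,\partial V')$. So you need neither inspection of a cited proof nor a finite-cover argument, and confinement in $V'$ is immediate from the length bound. The paper's citation is shorter and yields the full two-sided ball-box structure, valid in any step, which is more than this lemma requires.
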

\begin{proof}
For fixed $\lambda$, apply
\cite[Proposition~1.1 and Theorem~4]{NagelSteinWainger1985} to the
weighted family consisting of $W_i^\lambda$ with degree one and
$[W_i^\lambda,W_j^\lambda]$ with degree two.  Locally, the spanning
condition gives the smooth coefficient functions required there.
Applying the construction in an open set $O$ with
$V\Subset O\Subset\operatorname{int}V'$ gives the stated confinement.  The
degree-one fields and their degree-two commutators span with a uniformly
bounded right inverse, while the standard commutator paths realize a bracket
displacement of size $t^2$ at $W^\lambda$-cost $O(t)$.  It follows that
\[
  d_{W^\lambda}(x,y)\le c|x-y|^{1/2}
\]
whenever $x,y\in V$ and $|x-y|<r_0$, with the realizing trajectory
contained in $V'$.
The constants are uniform because the ball-box construction depends only on
finitely many local smooth seminorms of the fields on $V'$ and on a bound
for a right inverse of
$\mathcal B_{\lambda,x}$.  Joint smoothness, compactness of
$\Theta\times V'$, and Definition~\ref{def:uniform-generating} provide
these bounds.  A finite cover of $\Theta\times V$ then gives common
constants $c$ and $r_0$.
\end{proof}
\begin{lemma}[Uniform approximate Goh estimate]
\label{lem:uniform-approximate-Goh}
Let $X^\lambda=(X_1^\lambda,\ldots,X_k^\lambda)$ be a jointly smooth
family of vector fields on $\R^n$, all supported in one fixed compact set,
with $\lambda\in\Theta$.  Let $\mathcal A\subset\Theta\times
L^2([0,1];\R^k)$ be compact for the strong $L^2$ topology, and assume
that every $(\lambda,u)\in\mathcal A$ is energy minimizing between its
endpoints and that
\[
 \sup\{\|u\|_{L^\infty}:(\lambda,u)\in\mathcal A\}<\infty.
\]
Write $E_\lambda$ for the endpoint map from $0$, set
\[
 J(u):=\frac12\|u\|_{L^2}^2,\qquad F_\lambda:=(E_\lambda,J),
\]
and let $\operatorname{ind}_{-}(Q)$ denote the maximal dimension of a
subspace on which a quadratic form $Q$ is negative definite.  For every
$\kappa>0$ and integer $N\ge1$, there is $\Lambda>0$ with the following
property.  If $(\lambda,u)\in\mathcal A$ and
$(\bar p,\bar p_0)\in(\R^n)^*\times\R$ is nonzero and satisfies
\begin{equation}\label{eq:uniform-Goh-stationarity}
 \bar pD_uE_\lambda=\bar p_0D_uJ,\qquad
 |\bar p|\ge\Lambda|\bar p_0|,
\end{equation}
and
\begin{equation}\label{eq:uniform-Goh-index}
 \operatorname{ind}_{-}\!\left(
  \left.(\bar p,-\bar p_0)\cdot D_u^2F_\lambda
  \right|_{\ker D_uF_\lambda}\right)<N,
\end{equation}
then the adjoint covector $p(t)$ with terminal value $p(1)=\bar p$
satisfies
\begin{equation}\label{eq:uniform-approximate-Goh}
 \left|p(t)[X_i^\lambda,X_j^\lambda](\gamma_{0,u}^\lambda(t))\right|
 \le\kappa|\bar p|
 \qquad(0\le t\le1, 1\le i,j\le k).
\end{equation}
\end{lemma}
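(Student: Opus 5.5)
We argue by contradiction and compactness, reducing the statement to the classical qualitative Goh condition of Agrachev--Sarychev: a strictly abnormal multiplier ($\bar p_0=0$) whose Hessian has finite negative index on the kernel of $D_uF_\lambda$ satisfies $p(t)[X_i,X_j](\gamma(t))\equiv0$. Rifford's Proposition~3.7 in \cite{Rifford2023} is a quantitative version of this at a fixed frame. The only new point is uniformity over the compact set $\mathcal A$, and a limiting argument should supply it.

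Suppose the lemma fails for some $\kappa>0$ and $N\ge1$. Then there are $(\lambda_m,u_m)\in\mathcal A$ and nonzero $(\bar p_m,\bar p_{0,m})$ satisfying \eqref{eq:uniform-Goh-stationarity} with $\Lambda=m$ and the index bound \eqref{eq:uniform-Goh-index}, but violating \eqref{eq:uniform-approximate-Goh} at some time $t_m$ and some pair $(i_m,j_m)$. Both conditions are homogeneous in the covector, so we normalize $|\bar p_m|=1$, which forces $|\bar p_{0,m}|\le 1/m\to0$. By compactness of $\mathcal A$ and of the unit sphere, and after passing to a subsequence, we may assume:
\begin{itemize}
\item $\lambda_m\to\lambda$ and $u_m\to u$ strongly in $L^2$, with $(\lambda,u)\in\mathcal A$;
\item $\bar p_m\to\bar p$ with $|\bar p|=1$, and $t_m\to t_*$, $(i_m,j_m)=(i,j)$ constant.
\end{itemize}
The fields are jointly smooth and supported in a fixed compact set, and the controls are uniformly bounded in $L^\infty$. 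Gronwall estimates then give uniform convergence of the trajectories $\gamma^{\lambda_m}_{0,u_m}\to\gamma^\lambda_{0,u}$ and of the adjoint solutions $p_m\to p$. They also give convergence of $D_{u_m}E_{\lambda_m}$, and of the second derivatives $D^2_{u_m}E_{\lambda_m}$ as bilinear forms, in operator norm on $L^2$. Passing to the limit in $\bar p_mD_{u_m}E_{\lambda_m}=\bar p_{0,m}u_m$ yields $\bar p D_uE_\lambda=0$, so $u$ is abnormal with multiplier $(\bar p,0)$. Passing to the limit in the violated inequality gives $|p(t_*)[X^\lambda_i,X^\lambda_j](\gamma(t_*))|\ge\kappa$.

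The main obstacle is transferring the index bound to the limit, because the kernels $\ker D_{u_m}F_{\lambda_m}$ can jump. Suppose the limit form $Q:=(\bar p,0)\cdot D^2_uF_\lambda$ is negative definite on a subspace $S\subset\ker D_uF_\lambda$ of dimension $N+n+1$. Since $Q_m:=(\bar p_m,-\bar p_{0,m})\cdot D^2_{u_m}F_{\lambda_m}$ converges to $Q$ in norm, $Q_m$ is negative definite on $S$ for large $m$. The subspace $S\cap\ker D_{u_m}F_{\lambda_m}$ has codimension at most $n+1$ in $S$, because $D_{u_m}F_{\lambda_m}$ takes values in $\R^{n+1}$. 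It therefore has dimension at least $N$, and this contradicts \eqref{eq:uniform-Goh-index}. Hence the limit index on $\ker D_uF_\lambda$ is finite, namely less than $N+n+1$. The loss of $n+1$ dimensions is harmless, since the qualitative Goh theorem only needs the index to be finite.

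We then apply that theorem to the abnormal pair $(u,\bar p)$: the adjoint satisfies $p(t)[X^\lambda_i,X^\lambda_j](\gamma(t))=0$ for all $t$. The lemma assumes each $u\in\mathcal A$ is minimizing; beyond that, the Goh theorem needs only the finite-index hypothesis, which holds here. This contradicts the lower bound $\kappa$ obtained at $t_*$ and proves the lemma. The routine points that still need checking are the continuity of $D^2_uE_\lambda$ jointly in $(\lambda,u)$ on $L^\infty$-bounded sets for the strong $L^2$ topology, and the uniform convergence of the adjoint covectors. Both follow from the variational equations and Gronwall's inequality, using the common compact support of the fields.
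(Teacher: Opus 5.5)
Your argument is correct, but it takes a different route from the paper's.

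\textbf{The paper's route.} The paper works locally. For each $(\lambda_0,\bar u)\in\mathcal A$ it applies Rifford's quantitative Proposition~3.7 to the fixed frame $X^{\lambda_0}$, which gives an $L^2$-radius and a threshold $\Lambda_{\lambda_0,\bar u}$. It then asserts, by inspecting Rifford's Lemma~3.8 and estimates (3.19)--(3.32), that these constants depend only on $C^2$ bounds for the fields, the $L^\infty$ bound on the controls, and Gronwall bounds on the fundamental matrix. They are therefore locally uniform in $\lambda$, and a finite subcover of $\mathcal A$ produces $\Lambda$.

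\textbf{Your route.} You argue by contradiction:
\begin{itemize}
\item you normalize $|\bar p_m|=1$;
\item you extract a limiting abnormal multiplier $(\bar p,0)$ at a point of $\mathcal A$;
\item you use operator-norm continuity of $D_uE_\lambda$ and $D_u^2E_\lambda$ in $(\lambda,u)$;
\item you transfer the index bound across the jumping kernels, losing at most $n+1$ dimensions;
\item you conclude with the qualitative Goh theorem.
\end{itemize}
Rifford's quantitative estimate is never needed.

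\textbf{What each buys.} Your proof is self-contained modulo a classical result and does not depend on re-examining the constants in another paper's proof. It covers the case $\bar u=0$ and the $\lambda$-dependence with no separate remark. It also never uses the minimality of the controls in $\mathcal A$. It needs only strong compactness and the $L^\infty$ bound, the latter to place the limit control in the class where the classical Goh theorem is usually stated. The price is that $\Lambda$ is obtained non-constructively. The paper's route keeps, at least in principle, Rifford's explicit dependence of the threshold on the data.

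\textbf{Two points for the write-up.} Cite the Goh theorem in its ``finite negative index of the Hessian on the kernel implies the Goh condition'' form (as in Agrachev--Sachkov), not the version stated for strictly abnormal minimizers. Also note that $\ker D_uF_\lambda$ has codimension at most one in $\ker D_uE_\lambda$, so finite index on the former gives finite index on the latter. Finiteness of the index on $L^2$ variations implies finiteness on the $L^\infty$ subspace, so the function-space setting of the classical statement causes no difficulty.
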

\begin{proof}
Set
\[
 R_*:=\sup\{\|u\|_{L^\infty}:(\lambda,u)\in\mathcal A\}
\]
and choose $L>R_*$.  Fix $(\lambda_0,\bar u)\in\mathcal A$.
For the fixed system $X^{\lambda_0}$, \cite[Proposition~3.7]{Rifford2023} yields, for every $\kappa > 0$ and $N \ge 1$, constants $\rho_{\lambda_0,\bar u} > 0$ and $\Lambda_{\lambda_0,\bar u} > 0$ such that the conclusion holds whenever
\[
    \| u - \bar u\|_{L^2} \le \rho_{\lambda_0,\bar u}, \qquad \| u\|_{L^\infty} < 2L, \qquad |\bar p| \ge \Lambda_{\lambda_0,\bar u}|\bar p_0|,
\]
together with the stationarity and index assumptions \eqref{eq:uniform-Goh-stationarity}--\eqref{eq:uniform-Goh-index}. Moreover, after possibly shrinking a neighborhood of $\lambda_0$, the radius $\rho_{\lambda_0,\bar u}$ may be chosen uniformly positive and the threshold $\Lambda_{\lambda_0,\bar u}$ uniformly bounded with respect to $\lambda$. Indeed, inspection of
\cite[Lemma~3.8 and equations~(3.19)--(3.32)]{Rifford2023} shows that
they depend only on $L$, on uniform bounds for the fields and their first
two derivatives, and on bounds for the fundamental matrix of the
linearized system and its inverse.  Joint smoothness and the common compact support provide the required $C^2$-bounds on the fields uniformly for $\lambda$ near $\lambda_0$. Moreover, if $S^{\lambda,u}$ denotes the fundamental matrix of the linearized system along $\gamma^{\lambda}_{0,u}$,
\[ \dot S^{\lambda,u}(t) = A^{\lambda,u} S^{\lambda,u}(t), \qquad A^{\lambda,u}(t) = \sum_{i=1}^k u_i(t) J_{X_i^\lambda} (\gamma_{0,u}^\lambda(t)),
\]
with $S^{\lambda,u}(0) = I$, then the bound $\| u\|_{L^\infty} < 2L$ and the uniform $C^1$-bounds on the fields imply, by Gronwall's inequality, uniform bounds on $S^{\lambda,u}$ and $(S^{\lambda,u})^{-1}.$
Thus there are a neighborhood
$V_{\lambda_0,\bar u}$ of $(\lambda_0,\bar u)$ in
$\Theta\times L^2$ and a constant
$\Lambda_{\lambda_0,\bar u}$ for which the conclusion holds throughout
$V_{\lambda_0,\bar u}\cap\mathcal A$.

The proof of the cited proposition also applies when $\bar u=0$: the
estimates in Lemma~3.8 and formulas (3.19)--(3.32) use only the
$L^\infty$ bound on the base control and the smoothness bounds just
described.

The sets $V_{\lambda_0,\bar u}\cap\mathcal A$ form an open cover of the
compact set $\mathcal A$ in its relative topology.  Choose a finite subcover and let $\Lambda$ be
the maximum of the corresponding thresholds.  Since
$\|u\|_{L^\infty}\le R_*<2L$ on $\mathcal A$, this $\Lambda$ has the
required property.
\end{proof}
\begin{lemma}[Uniform squared-distance stability]
\label{lem:app-distance-stability}
Let $C,C' \subset \mathbb{R}^n$ be compact sets such that
$0\in C\subset\operatorname{int}C'\Subset U_0$.  Suppose $Y^\lambda$ is
jointly smooth and uniformly two-generating on $\overline{U_1}$, and that
for some $R<\infty$ and every $\lambda\in\Theta$,
\begin{equation}\label{eq:centered-confinement-hypotheses}
 d_{Y^\lambda}(0,q)\le R\quad(q\in C'),
 \qquad
 \gamma_{0,u}^\lambda([0,1])\subset U_0
 \quad\text{if }\|u\|_{L^2}\le R+1.
\end{equation}
Then there is $L$ such that
\begin{equation}\label{eq:endpoint-lipschitz}
  |d_{Y^\lambda}(0,q)^2-d_{Y^\lambda}(0,q')^2|
  \le L|q-q'|
\end{equation}
for all $q,q'\in C$ and $\lambda\in\Theta$.
Moreover, if $Z^\lambda$ is a second family with the same properties and the same
bound $R$, then there are $\varepsilon_0,L'>0$ such that
\begin{equation}\label{eq:frame-distance-stability}
  \sup_{q\in C}
  |d_{Y^\lambda}(0,q)^2-d_{Z^\lambda}(0,q)^2|
  \le L'\max_i\|Y_i^\lambda-Z_i^\lambda\|_{C^1(\overline{U_1})}
\end{equation}
whenever the norm on the right is at most $\varepsilon_0$.
\end{lemma}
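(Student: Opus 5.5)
The plan is to prove \eqref{eq:endpoint-lipschitz} by the nonsmooth Lipschitz criterion of \cite[Propositions~2.2 and~2.4]{Rifford2023}, and then to obtain \eqref{eq:frame-distance-stability} from a perturbation argument with a Lagrange-type correction. Write $\phi_\lambda(q):=\tfrac12 d_{Y^\lambda}(0,q)^2$ on $\operatorname{int}C'$. By \eqref{eq:centered-confinement-hypotheses}, minimizers from $0$ to $q\in C'$ have energy at most $R^2/2$. Their trajectories stay in $U_0$, so the state constraint $\gamma\subset U_1$ is inactive. A minimizer therefore exists by the usual compactness argument, and it is a genuine local minimizer of $J$ on the fibre $E_\lambda^{-1}(q)$. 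By the cited criterion, it suffices to show that every proximal subgradient $\zeta$ of $\phi_\lambda$ at points of a neighbourhood of $C$ satisfies $|\zeta|\le L$, uniformly in $\lambda$.

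Suppose $\zeta\in\partial_P\phi_\lambda(q)$ and let $u$ be a minimizer reaching $q$. The proximal inequality, composed with the endpoint map, says that $u$ locally minimizes $J(v)-\zeta\cdot E_\lambda(v)+\sigma|E_\lambda(v)-q|^2$. First-order conditions then give $\zeta D_uE_\lambda=D_uJ$, so $(\bar p,\bar p_0)=(\zeta,1)$ is a normal multiplier. The second-order condition bounds the negative index of $(\zeta,-1)\cdot D_u^2F_\lambda$ on $\ker D_uF_\lambda$ by a fixed $N$ (in fact it gives index $\le n$). Minimizers of the normal type are smooth with $|u|$ constant equal to the length, so $\|u\|_{L^\infty}\le R$. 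The set $\mathcal A$ of all pairs $(\lambda,u)$ with $u$ minimizing from $0$ into $C$ is strongly compact in $L^2$, by the standard argument: weak limits are minimizers, and then the norms converge. After cutting the fields off outside $U_1$, Lemma~\ref{lem:uniform-approximate-Goh} applies.

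Argue by contradiction. If $|\zeta|\ge\Lambda$, then along $\gamma$ the adjoint $p(t)$ satisfies the approximate Goh bound \eqref{eq:uniform-approximate-Goh} with small $\kappa$. Stationarity also gives $p(t)Y_i^\lambda=u_i(t)$, which is bounded by $R\ll\kappa|\zeta|$ once $\Lambda$ is large. Uniform two-generation (Definition~\ref{def:uniform-generating}) then forces $|p(t)|\le c(\kappa|\zeta|+R)$. Taking $\kappa<1/(2c)$ and using $|p(1)|=|\zeta|$ yields $|\zeta|\le 2cR$, which contradicts $|\zeta|\ge\Lambda$ for $\Lambda$ large. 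This bounds proximal subgradients and proves \eqref{eq:endpoint-lipschitz}. The main obstacle is this step: justifying compactness of $\mathcal A$ and the index bound in a form matching Lemma~\ref{lem:uniform-approximate-Goh}, uniformly in $\lambda$.

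For \eqref{eq:frame-distance-stability}, take a minimizer $u$ for $Y^\lambda$ from $0$ to $q\in C$. By Gronwall, the $Z^\lambda$-trajectory of the same control ends at some $q'$ with $|q'-q|\le c\varepsilon$, where $\varepsilon=\max_i\|Y_i^\lambda-Z_i^\lambda\|_{C^1}$; it also stays in $U_1$ if $\varepsilon\le\varepsilon_0$. Hence $d_Z(0,q')^2\le d_Y(0,q)^2$, and the Lipschitz bound \eqref{eq:endpoint-lipschitz} for $Z$, applied on $C'$ (which contains $q'$ for small $\varepsilon$), gives $d_Z(0,q)^2\le d_Y(0,q)^2+L c\varepsilon$. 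For this one can run the first part with $C'$ in place of $C$, shrinking slightly if necessary. Exchanging the roles of $Y$ and $Z$ completes the proof.
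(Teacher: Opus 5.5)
Your plan follows the paper's proof step for step: confined minimizers and strong compactness of the minimizing family, cutoff of the fields, a uniform bound on proximal subgradients via Lemma~\ref{lem:uniform-approximate-Goh} and dual two-generation, Rifford's Lipschitz criterion, and then a same-control Gronwall comparison for \eqref{eq:frame-distance-stability}. That last comparison goes through an intermediate compact set, which is your ``shrinking slightly'' and the paper's $C_0$.

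One step is wrong as written: the sign of the multiplier. With $(\bar p,\bar p_0)=(\zeta,1)$, the form in \eqref{eq:uniform-Goh-index} is $(\zeta,-1)\cdot D_u^2F_\lambda=\zeta D_u^2E_\lambda-D_u^2J$. This is the \emph{negative} of what your second-order condition controls. Local minimality of $v\mapsto J(v)-\zeta\cdot E_\lambda(v)+\sigma|E_\lambda(v)-q|^2$ gives $D_u^2J-\zeta D_u^2E_\lambda\ge0$ on $\ker D_uE_\lambda$. Hence $\zeta D_u^2E_\lambda-D_u^2J\le0$ on $\ker D_uF_\lambda$, and its negative index there is infinite. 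Indeed, along highly oscillating variations in a single control direction, $\zeta D_u^2E_\lambda(v,v)\to0$ while $D_u^2J(v,v)=\|v\|_{L^2}^2$. So Lemma~\ref{lem:uniform-approximate-Goh} cannot be invoked with your multiplier, and your ``index $\le n$'' claim is false for this form.

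The fix is the paper's choice $(\bar p,\bar p_0)=(-\zeta,-1)$. Stationarity is homogeneous and is unaffected. The form becomes $D_u^2J-\zeta D_u^2E_\lambda$, whose negative index on $\ker D_uF_\lambda$ is zero, so any $N\ge1$ works. The new adjoint is just $-p(t)$. The Goh bound and $p(t)Y_i^\lambda=u_i$ therefore hold exactly as you use them, and your two-generation contradiction goes through: fix $\kappa$ first, then enlarge $\Lambda$ past $2cR$, which the lemma permits.

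There are two smaller points.
\begin{enumerate}
\item Lemma~\ref{lem:uniform-approximate-Goh} requires a uniform $L^\infty$ bound on all of $\mathcal A$, abnormal minimizers included. That bound should come from the constant-speed property of energy minimizers for a pointwise independent frame, $|u(t)|=d_{Y^\lambda}(0,q)\le R$ a.e., not from smoothness of normal minimizers.
\item $\mathcal A$ must be built over endpoints in a compact neighbourhood $C_1\subset\operatorname{int}C'$ of $C$, since subgradient bounds are needed on an open set. Its compactness, and the continuity of $\phi_\lambda$ required by Rifford's criterion, rest on continuity of $(\lambda,q)\mapsto d_{Y^\lambda}(0,q)$. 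The upper-semicontinuity half of that uses the uniform accessibility of Lemma~\ref{lem:uniform-step-two-ball-box}, as in the paper's Step~2.
\end{enumerate}
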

\begin{proof}
Choose compact sets $C_0,C_1,V,V'$ such that
\[
 C\subset\operatorname{int}C_0\subset C_0
 \subset\operatorname{int}C_1\subset C_1
 \subset\operatorname{int}C'
\]
and
\[
 C'\subset\operatorname{int}V
 \subset V\subset\operatorname{int}V'\Subset U_1.
\]
Lemma~\ref{lem:uniform-step-two-ball-box} gives constants $a\ge1$ and
$r_0>0$, independent of $\lambda$, such that
\begin{equation}\label{eq:appendix-local-accessibility}
 d_{Y^\lambda}(x,y)\le a|x-y|^{1/2}
\end{equation}
whenever $x,y\in V$ and $|x-y|<r_0$, with a realizing trajectory
contained in $V'$.

\medskip
\noindent \textbf{Step 1 (Compactness of endpoints and existence of minimizers).} We prove the confined, parametrized version of the standard endpoint
compactness argument, see e.g.~\cite[ Proposition~8.62]{ABB2020}. Suppose that
\[
 \lambda_m\longrightarrow\lambda,\qquad
 u_m\rightharpoonup u\quad\text{weakly in }L^2,\qquad
 \sup_m\|u_m\|_{L^2}\le R+1.
\]
The uniform $L^2$-bound on $(u_m)$ implies a uniform $(1/2)$-Hölder estimate on the curves $(\gamma_{0,u_m}^{\lambda_m})$. Therefore, by the Arzelà--Ascoli theorem, they converge uniformly to some $\tilde\gamma$, up to extraction of a subsequence. Joint smoothness of the fields and weak $L^2$ convergence allow passage to the limit in the integral equation defining the curves $(\gamma_{0,u_m}^{\lambda_m})$. Thus $\tilde \gamma$ solves the limiting controlled ODE with control $u$, and uniqueness yields $\tilde \gamma = \gamma_{0,u}^{\lambda}.$
Hence
\begin{equation}\label{eq:appendix-weak-endpoint-continuity}
 \operatorname{End}^{\lambda_m}_0(u_m)
 \longrightarrow\operatorname{End}^{\lambda}_0(u).
\end{equation}
Then the direct method in the calculus of variation yields a $Y^\lambda$-energy-minimizing control $u$ from $0$ to any $q \in C'$, with \begin{equation}\label{eq:appendix-minimizer-Linfty}
 |u(t)|=d_{Y^\lambda}(0,q)\le R
 \quad\text{for almost every }t.
\end{equation}
See e.g.~\cite[Corollary 8.64 and Lemma 3.64]{ABB2020}.

\medskip
\noindent { \textbf{Step 2 (Continuity and compactness of the minimizing family).}} We now establish the compactness of the set of confined, parametrized minimizers; see e.g.~\cite[Theorem 8.66]{ABB2020} for the classical version of this statement. Set
\[
 c(\lambda,q):=d_{Y^\lambda}(0,q)^2.
\]
We first show that the function $c$ is continuous on $\Theta\times C_1$. For a convergent sequence 
$(\lambda_m,q_m)\to(\lambda,q)$, let $u_m$ minimize
$c(\lambda_m,q_m)$. Then there exists $u \in L^2([0,1],\mathbb{R}^k)$ such that
$u_m\rightharpoonup u$ in $L^2$, up to extracting a subsequence. By
\eqref{eq:appendix-weak-endpoint-continuity}, $u$ joins $0$ to $q$ in the
$\lambda$-system, whence
\begin{equation}
 c(\lambda,q)\le\|u\|_{L^2}^2
 \le\liminf_{m\to\infty}c(\lambda_m,q_m).
\end{equation}
For the reverse inequality, let $v$ minimize $c(\lambda,q)$, and set $q'_m:=\operatorname{End}^{\lambda_m}_0(v).$ Continuous dependence on the parameter gives $q'_m\to q$.  For large $m$,
both $q'_m$ and $q_m$ belong to $V$, so that the triangle inequality and
\eqref{eq:appendix-local-accessibility} give
\[
\begin{split}
 d_{Y^{\lambda_m}}(0,q_m)
 &\le \|v\|_{L^2}
     +d_{Y^{\lambda_m}}(q'_m,q_m)\\
 &\le d_{Y^\lambda}(0,q)
     +a|q'_m-q_m|^{1/2}.
\end{split}
\]
Thus $\limsup_{m\to\infty}c(\lambda_m,q_m)\le c(\lambda,q)$, and continuity is proved. Let us show now that
\[
 \mathcal K:=
 \left\{(\lambda,u) \in \Theta\times L^2([0,1];\mathbb{R}^k)
 :
 \, \operatorname{End}^{\lambda}_0(u)\in C_1 \text{ and }
 \|u\|_{L^2}^2=c(\lambda,q)\right\}
\]
is compact in $\Theta\times L^2([0,1];\mathbb{R}^k)$, where $L^2([0,1];\mathbb{R}^k)$ is endowed with the strong topology. Note that
\eqref{eq:appendix-minimizer-Linfty} also yields a bound in $\Theta\times L^\infty([0,1];\mathbb{R}^k)$. After extraction, the parameters and endpoints of any sequence in
$\mathcal K$ converge and the controls converge weakly.  Endpoint
continuity from Step 1 and continuity of $c$ show that the weak limit is minimizing
and that the $L^2$ norms converge; hence the convergence is strong in $L^2$. 

\medskip
\noindent { \textbf{Step 3 (Reduction to compactly supported fields).}}
Choose $\chi\in C_c^\infty(U_1)$ equal to one near $\overline U_0$, set
$\widehat Y_i^\lambda:=\chi Y_i^\lambda$, and extend these fields by zero
to $\R^n$.  By a first-exit argument and \eqref{eq:centered-confinement-hypotheses}, every $\widehat Y^\lambda$-trajectory from $0$ with control norm at most $R + 1$ remains in $U_0$, where $\widehat Y^\lambda = Y^\lambda$. Since minimizing sequences for either distance eventually have norm below $R + 1$, and $d_{Y^\lambda}(0, q) \le R$, we obtain
\begin{equation}\label{eq:appendix-cutoff-distance}
 d_{\widehat Y^\lambda}(0,q)=d_{Y^\lambda}(0,q)\qquad(q\in C').)
\end{equation}  Thus $\mathcal K$ remains a compact family of minimizers for
the extended endpoint maps.  We now use the extended fields and again
denote them by $Y_i^\lambda$.

\medskip
\noindent {\textbf{Step 4 (Uniform bound on subgradients).}}
Set
\[
 E_\lambda(v):=\operatorname{End}_0^\lambda(v),\qquad
 J(v):=\frac12\|v\|_{L^2}^2,\qquad
 f_\lambda(q):=\frac12c(\lambda,q)\quad(q\in \operatorname{int}C_1).
\]
The argument used in Step 2 shows that $f_\lambda$ is continuous on $\operatorname{int}C_1$.
For a continuous function $f$ on an open subset of $\R^n$, denote by
$\partial^-f(q)$ its viscosity subdifferential: $p\in\partial^-f(q)$ if
$f$ has a $C^1$ support from below at $q$ with differential $p$.  The
proximal subdifferential $\partial^-_P f(q)$ is defined in the same way
with a $C^2$ support.

Let $\varphi$ be a $C^2$ lower support for $f_\lambda$ at $q$. For $v$ sufficiently close in $L^2$ to a minimizing control
$u$ from $0$ to $q$,
\[
J(v)\ge f_\lambda(E_\lambda(v))\ge \varphi(E_\lambda(v)),
\]
with equality at $v=u$. Thus $J-\varphi\circ E_\lambda$ has a local
minimum at $u$. Since $p=d\varphi(q)$, its first variation vanishes
and, on $\ker D_uE_\lambda$, its second variation is nonnegative:
\begin{equation}\label{eq:appendix-proximal-multiplier}
pD_uE_\lambda=D_uJ,\qquad D_u^2J(v)-pD_u^2E_\lambda(v)\ge0 \quad(v\in\ker D_uE_\lambda).
\end{equation}
This is the endpoint-map argument of
\cite[Proposition~3.2 and Remark~3.3]{Rifford2023}; the same
local-minimum argument applies when \(q=0\).

We now use these relations to bound $p$. Uniform two-generation gives
a constant $A$ such that
\begin{equation}\label{eq:appendix-dual-two-generation}
 |\xi|\le A\left(
   \max_i|\xi(Y_i^\lambda(x))|
   +\max_{i<j}|\xi([Y_i^\lambda,Y_j^\lambda](x))|
 \right)
\end{equation}
for $(\lambda,x)\in\Theta\times\overline U_0$ and every covector $\xi$. Indeed, this is the dual estimate associated with the uniformly bounded
right inverses in Definition~\ref{def:uniform-generating}; the second
maximum is understood as zero if its index set is empty.

Apply Lemma~\ref{lem:uniform-approximate-Goh} with
\[
 (\bar p,\bar p_0)=(-p,-1).
\]
The stationarity relation \eqref{eq:uniform-Goh-stationarity} follows from
\eqref{eq:appendix-proximal-multiplier}, and the quadratic form in
\eqref{eq:uniform-Goh-index} is
\[
 (-p,1)\cdot D_u^2(E_\lambda,J)
 =D_u^2J-pD_u^2E_\lambda.
\]
It is nonnegative on $\ker D_uE_\lambda$, hence has negative index zero
on $\ker D_u(E_\lambda,J)$.  If $|p|\ge\Lambda$, the adjoint covector
$\zeta$ with $\zeta(1)=-p$ therefore satisfies
\eqref{eq:uniform-approximate-Goh}.  The first relation in
\eqref{eq:appendix-proximal-multiplier}, written in adjoint form, also gives
\[
 \zeta(t)Y_i^\lambda(\gamma_{0,u}^\lambda(t))=-u_i(t)
 \quad\text{for almost every }t.
\]
The left side is continuous, so it defines a continuous representative of
$-u_i$; by \eqref{eq:appendix-minimizer-Linfty} its absolute value is at
most $R$ for every $t$.  At $t=1$, estimates
\eqref{eq:uniform-approximate-Goh} and
\eqref{eq:appendix-dual-two-generation} give
\[
 |p|\le A(R+\kappa|p|),
\]
and hence $|p|\le2AR$.  The case $|p|<\Lambda$ is immediate, so
\begin{equation}\label{eq:appendix-proximal-bound}
 |p|\le B:=\max\{\Lambda,2AR\}
 \quad \text{for any }p\in\partial^-_P f_\lambda(q),  \quad\text{for any }q\in \operatorname{int}C_1,
\end{equation}
uniformly in $\lambda$. By \cite[Proposition~2.2]{Rifford2023}, every viscosity subgradient is a
limit in $T^*\R^n$ of proximal subgradients at nearby points.  Hence
\begin{equation}\label{eq:appendix-viscosity-bound}
 |p|\le B
 \quad\text{for any }p\in\partial^- f_\lambda(q),  \quad\text{for any }q\in \operatorname{int}C_1,
\end{equation}
again uniformly in $\lambda$.

\medskip
\noindent {\textbf{Step 5 (Lipschitz continuity of the squared distance).}}
Rifford's viscosity-subgradient characterization of Lipschitz functions
\cite[Proposition~2.4]{Rifford2023} now shows that $f_\lambda$ is
$B$-Lipschitz on every Euclidean ball contained in $\operatorname{int}C_1$.  Set
\[
 r:=\frac12\operatorname{dist}
 \bigl(C_0,\R^n\setminus \operatorname{int}C_1\bigr)>0.
\]
If $q,q'\in C_0$ and $|q-q'|<r$, apply this characterization on
$B(q,2r)\subset \operatorname{int}C_1$ to obtain
\[
 |c(\lambda,q)-c(\lambda,q')|
 \le2B|q-q'|.
\]
For $|q-q'|\ge r$, use $0\le c(\lambda,\cdot)\le R^2$ on $C'$ to get
\[
 |c(\lambda,q)-c(\lambda,q')|
 \le\frac{2R^2}{r}|q-q'|.
\]
Thus there is $L_1$, independent of $\lambda$, such that
\begin{equation}\label{eq:appendix-Lipschitz-C0}
 |c(\lambda,q)-c(\lambda,q')|\le L_1|q-q'|
 \qquad(q,q'\in C_0).
\end{equation}
In particular, \eqref{eq:endpoint-lipschitz} holds with $L=L_1$. Note that the latter estimate actually holds for any $q,q' \in C_0$.

\medskip
\noindent { \textbf{Step 6 (Stability under perturbation of the frame).}}
Return to the original fields.  Let $Z^\lambda$ be a second family satisfying the same hypotheses as $Y^\lambda$, with the same bound $R$. Applying the preceding argument to both families and enlarging $L_1$ if necessary, we obtain a common $L_1$-Lipschitz bound for their squared-distance functions on $C_0$.  Put
\[
 \varepsilon:=
 \max_i\|Y_i^\lambda-Z_i^\lambda\|_{C^1(\overline U_1)}.
\]
Fix $q\in C$, let $u$ minimize the $Z^\lambda$-energy from $0$ to $q$,
and let $q_Y$ be the endpoint obtained by running the same control in the
$Y^\lambda$-system.  By the analogue of
\eqref{eq:appendix-minimizer-Linfty} for $Z^\lambda$,
$\|u\|_{L^\infty}\le R$.
The two trajectories remain in $U_0$; since they are driven by the same control, uniform $C^1$-bounds for the
fields and Gronwall's inequality give
\[
 |q_Y-q|\le A_R\varepsilon
\]
with $A_R$ independent of $\lambda$ and $q$.  Choose
$\varepsilon_0>0$ so that
\[
 A_R\varepsilon_0<
 \operatorname{dist}\bigl(C,\R^n\setminus\operatorname{int}C_0\bigr).
\]
Then $q_Y\in C_0$ when $\varepsilon\le\varepsilon_0$. In that case, as mentioned at the end of Step 5, we can apply \eqref{eq:endpoint-lipschitz} and get
\[
\begin{split}
 d_{Y^\lambda}(0,q)^2
 &\le d_{Y^\lambda}(0,q_Y)^2+L_1|q-q_Y|\\
 &\le\|u\|_{L^2}^2+L_1A_R\varepsilon\\
 &=d_{Z^\lambda}(0,q)^2+L_1A_R\varepsilon.
\end{split}
\]
Interchanging $Y^\lambda$ and $Z^\lambda$ proves
\eqref{eq:frame-distance-stability}, with $L'=L_1A_R$.
\end{proof}
We conclude with the localization lemma used in Section~\ref{sec:step-two}. It provides the uniform confinement and distance bounds required to apply the distance-stability estimate above to the rescaled and model frames. Let us briefly recall the context of this section:  $(M,\mathcal{D},g)$ is a smooth equiregular sub-Riemannian manifold of step $s \le 2$, the open set $O\subset M$ admits an adapted bracket frame $(X_1,\ldots,X_n)$ for which each $\Phi_x^{-1}$ is a privileged coordinate system. The associated rescaled fields $X^{x,h}:=(X_1^{x,h},\ldots,X_k^{x,h})$ are defined in
\eqref{eq:general-rescaled-fields}. They converge smoothly on compact sets to the model nilpotent fields $\widehat X := (\widehat X_{1,x},\ldots,\widehat X_{k,x})$.

\begin{lemma}[Uniform localization of parametrized frames]
\label{lem:uniform-confinement}
Let $K\Subset O$, and let
$C\subset\R^n$ be a compact coordinate box such that
$0\in\operatorname{int}C$.  There exist bounded open boxes
$U_0\Subset U_1$, with $C\subset U_0$, and a constant $R<\infty$ with the
following property.  Let $h_0>0$, and let
$W^{x,h}=(W_1^{x,h},\ldots,W_k^{x,h})$, $x\in K$ and $h \in [0,h_0]$,
be a jointly smooth family on a neighborhood of
$K\times[0,h_0]\times\overline{U_1}$ satisfying
\begin{equation}\label{eq:confinement-family-limit}
  W_i^{x,0}(\xi)=\widehat X_{i,x}(\xi),
  \qquad x\in K,\xi \in \overline{U_1},  \quad 1\le i\le k.
\end{equation}
Then there exists $h_*\in(0,h_0]$ such that:
\begin{enumerate}
\item The fields $W^{x,h}$ are
pointwise linearly independent and the family is uniformly two-generating
on $\overline{U_1}$ for any $h \in [0,h_*]$;
\item Every $W^{x,h}$-trajectory starting at
$0$ and controlled by $u \in L^2([0,1];\mathbb{R}^k)$ with $\|u\|_{L^2}\le R+1$ is defined on $[0,1]$
and remains in $U_0$;
\item For every  $x\in K$ and
$ h \in [0,h_*]$,
\begin{equation}\label{eq:centered-distance-bound}
  d_{W^{x,h},U_1}(0,q)\le R,
  \qquad q\in C,
\end{equation}
where $d_{W^{x,h},U_1}$ denotes the distance obtained by restricting
trajectories to $U_1$;
\item For
$W^{x,h}=X^{x,h}$, the distance in \eqref{eq:centered-distance-bound} agrees with $d_{x,h}(0,q)$ for $q\in C$, where
$d_{x,0}:=\widehat d_x$.
\end{enumerate}
\end{lemma}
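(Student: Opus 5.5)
The plan is to fix the boxes and $R$ from the tangent frames alone, by compactness in $x\in K$, and then transfer every property to $W^{x,h}$ for small $h$ by continuity in $h$.

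\textbf{Step 0: choosing the data.} By homogeneity and the ball-box estimate in Proposition~\ref{prop:uniform-ball-control}, there is $R_0$ with $\widehat d_x(0,q)\le R_0$ for all $x\in K$ and $q\in C$. Set $R:=R_0+1$. The model fields $\widehat X_{i,x}$ are polynomial with coefficients depending continuously on $x$. By Gronwall's inequality in its weighted form (or by triangular integration of the graded nilpotent system), every $\widehat X_x$-trajectory from $0$ with $\|u\|_{L^2}\le R+2$ stays in a fixed compact set. Choose an open box $U_0$ containing that set and $C$, and then $U_1\Supset U_0$. By equiregularity and step $\le 2$, the brackets of the $\widehat X_{i,x}$ together with the $\widehat X_{i,x}$ span at every point of $\overline{U_1}$. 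Indeed, the model is a Carnot group of step $\le 2$, and left-invariant fields span everywhere. The right inverses of $\mathcal B_{x,\xi}$ are bounded by continuity and compactness of $K\times\overline{U_1}$.

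\textbf{Transfer to small $h$ (items 1--3).} Items 1 and 2 follow by continuity. Linear independence and surjectivity of $\mathcal B$ with a bounded right inverse are open conditions that are uniform on the compact set $K\times[0,h_*]\times\overline{U_1}$, by joint smoothness and \eqref{eq:confinement-family-limit}. For confinement, compare a $W^{x,h}$-trajectory with the $\widehat X_x$-trajectory driven by the same control. As long as the former stays in $\overline{U_1}$, Gronwall's inequality gives a distance $\le C_R\sup\|W^{x,h}-\widehat X_x\|_{C^1(\overline U_1)}\to0$. A first-exit argument then keeps it inside $U_0$ and shows it is defined on $[0,1]$.

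For item 3, I would use Lemma~\ref{lem:uniform-step-two-ball-box} applied to the family $W^{x,h}$ with parameter $(x,h)\in K\times[0,h_*]$. Given $q\in C$, take a $\widehat X_x$-minimizer $u$ of norm $\le R_0$. Running $u$ in the $W^{x,h}$-system lands at $q'$ with $|q-q'|$ small, uniformly, and the trajectory stays in $U_0\subset U_1$. Lemma~\ref{lem:uniform-step-two-ball-box} then joins $q'$ to $q$ inside $U_1$ with cost $\le c|q-q'|^{1/2}<1$. This gives $d_{W^{x,h},U_1}(0,q)\le R_0+1=R$.

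\textbf{Item 4.} For $W=X^{x,h}$ with $h>0$, the rescaled distance $d_{x,h}$ is the control distance of the rescaled fields on $\delta_{1/h}\Phi_x^{-1}$ of a neighborhood, with curves unrestricted. By item 3 the restricted distance is $\le R$, so $d_{x,h}(0,q)\le R$. Conversely, any near-minimizing curve has $L^2$ norm $<R+1$, so by item 2 it stays in $U_0\subset U_1$; hence the two infima coincide. This requires that curves of length $<(R+1)h$ from $x$ stay in the chart domain $O$, which holds for small $h$ as noted after Proposition~\ref{prop:first-kind-privileged}. For $h=0$ the same argument applies to $\widehat d_x$.

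\textbf{Main obstacle.} The delicate point is the uniformity in item 3 near $h=0$: we need the accessibility constants of Lemma~\ref{lem:uniform-step-two-ball-box} to be uniform over the whole compact parameter set including $h=0$. This is exactly where the joint smoothness up to $h=0$ from Proposition~\ref{prop:frame-expansion} and uniform two-generation from item 1 enter.
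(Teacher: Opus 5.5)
Your proposal is correct and follows essentially the same route as the paper: $R$ and the boxes are fixed from the tangent balls, item~1 follows from $C^1$-openness, item~2 from Gronwall plus a first-exit argument, item~3 from running a model near-minimizer and closing the endpoint gap with the square-root estimate of Lemma~\ref{lem:uniform-step-two-ball-box}, and item~4 from confining near-minimizers of control norm $<R+1$ via item~2. The only variant is in item~4, where you use item~3 to get $d_{x,h}(0,q)\le R$ instead of the paper's appeal to uniform convergence of $d_{x,h}$; there you should also say that short ambient curves stay in the image under $\Phi_x$ of the common neighborhood on which $\Phi_x$ is a diffeomorphism (which follows from the uniform ball--box estimate), not merely in $O$, whereas the paper sidesteps this with a first-exit argument from $\Phi_x(\delta_hU_0)$.
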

\begin{proof}
The uniform tangent ball--box estimates and homogeneity imply that, for
every $L<\infty$, the sets $\widehat B_x(L)$, $x\in K$, are contained in a
common compact box.  Choose
\[
  A>\sup_{x\in K,\ q\in C}\widehat d_x(0,q),
\]
and set $R:=A+1$.  Choose $U_0\Subset U_1$ so that $C$ and the closures of
the tangent balls $\widehat B_x(R+1)$ lie in $U_0$ at positive distance from
$\partial U_0$.  For a family as in the statement, set
\[
  \rho(h):=\sup_{\substack{x\in K,\ 0\le t\le h\\1\le i\le k}}
  \|W_i^{x,t}-\widehat X_{i,x}\|_{C^1(\overline U_1)}.
\]
Then $\rho(h)\to0$ as $h\downarrow0$.  The tangent fields are pointwise
linearly independent and uniformly two-generating on
$K\times\overline U_1$.  These properties are open in the $C^1$ topology,
and the tangent bracket maps admit uniformly bounded right inverses.
Consequently, after decreasing $h_0$, the same conclusions hold for
$W^{x,h}$.
Let $\gamma$ and $\widehat\gamma$ be the $W^{x,h}$- and
$\widehat X_x$-trajectories starting at $0$ with the same control
$\|u\|_{L^2}\le R+1$.  Every prefix control $u$ of $\widehat\gamma$ has length at most
$\|u\|_{L^1}\le R+1$, so $\widehat\gamma$ remains in
$\widehat B_x(R+1)$.  Up to the first exit of $\gamma$ from $U_0$,
Gronwall's inequality gives
\[
  \sup_t|\gamma(t)-\widehat\gamma(t)|\le c_R\rho(h).
\]
For sufficiently small $h$, the right-hand side is smaller than the distance
of the tangent balls from $\partial U_0$.  Therefore  $\gamma$ is defined on $[0,1]$ and remains in $U_0$.
We next prove \eqref{eq:centered-distance-bound}.   Choose compact sets $V,V'$ such that
\[
  C\subset\operatorname{int}V\subset V
  \subset\operatorname{int}V'\Subset U_0.
\]
Fix $q\in C$, choose an
$\widehat X_x$-control of $L^2$-norm less than $A$ that reaches $q$, and use the
same control for $W^{x,h}$.  Denote the resulting endpoint by $q_h$.  Then
$|q_h-q|=O(\rho(h))$, and $q,q_h\in V$ for small $h$.  By Lemma~\ref{lem:uniform-step-two-ball-box}, the points $q_h$
and $q$ can be joined inside $V'$ at cost $O(\rho(h)^{1/2})$.  Concatenating
the two controls and using constant-speed parametrization gives a control
of norm
\[
  A+O(\rho(h)^{1/2})<R.
\]
This proves \eqref{eq:centered-distance-bound}. It remains to identify the distance for $W^{x,h}=X^{x,h}$.  It is obvious that
\[
  d_{x,h}(0,q)\le d_{X^{x,h},U_1}(0,q),
  \qquad q\in C.
\]
Conversely, uniform convergence
\[
d_{x,h}(0,\cdot)\longrightarrow \widehat d_x(0,\cdot)
\]
on \(K\times C\) gives \(d_{x,h}(0,q)<R\) for all sufficiently small \(h\). Fix
\[
0<\varepsilon<R+1-d_{x,h}(0,q).
\]
By the definition of \(d_{x,h}\), there exists a constant-speed ambient horizontal curve \(\gamma:[0,1]\to M\) such that
\[
\gamma(0)=x,\qquad
\gamma(1)=\Phi_x(\delta_hq),
\qquad
L(\gamma)<h\bigl(d_{x,h}(0,q)+\varepsilon\bigr).
\]
Let \(\tau\) be the first exit time of \(\gamma\) from
\(\Phi_x(\delta_hU_0)\), with \(\tau=1\) if no exit occurs. For
\(0\le t<\tau\), write
\[
\dot\gamma(t)=\sum_{i=1}^k v_i(t)X_i(\gamma(t)),
\]
and set \(u_i:=h^{-1}v_i\). If \(\tau<1\), extend \(u\) by zero on
\([\tau,1]\). Since \(\gamma\) has constant speed,
\[
\|u\|_{L^2([0,1];\mathbb R^k)}
\le \frac{L(\gamma)}h
<d_{x,h}(0,q)+\varepsilon<R+1.
\]
For \(0\le t<\tau\), the rescaled curve
\[
\eta(t):=\delta_{1/h}\Phi_x^{-1}(\gamma(t))
\]
satisfies
\[
\dot\eta(t)
=\sum_{i=1}^k u_i(t)X_i^{x,h}(\eta(t)),
\qquad
\eta(0)=0.
\]
Let \(\widetilde\eta\) be the \(X^{x,h}\)-trajectory starting at \(0\) with control \(u\). By the confinement conclusion,
\[
\widetilde\eta([0,1])\subset U_0.
\]
Uniqueness gives \(\eta=\widetilde\eta\) on \([0,\tau)\). Hence \(\tau<1\) would imply that \(\widetilde\eta(\tau)\in\partial U_0\), contradicting the preceding inclusion. Therefore \(\tau=1\), and
\[
\eta([0,1])\subset U_0\Subset U_1,
\qquad
\eta(1)=q.
\]
Thus \(\eta\) is admissible for the \(U_1\)-restricted distance, and
\[
d_{X^{x,h},U_1}(0,q)
\le \|u\|_{L^2}
<d_{x,h}(0,q)+\varepsilon.
\]
Letting \(\varepsilon\downarrow0\) yields
\[
d_{X^{x,h},U_1}(0,q)\le d_{x,h}(0,q),
\]
which is the reverse inequality. For $h=0$, the same localization argument gives
$d_{\widehat X_x,U_1}(0,q)=\widehat d_x(0,q)$.
\end{proof}

\vspace{1cm}
\subsection*{Acknowledgments}
F.B.\ is partially supported by grant 10.46540/4283-00175B from the
Independent Research Fund Denmark, by the Villum Investigator grant
\emph{Stochastic Analysis in Aarhus}, and by the European Research Council
(ERC) under the European Union's Horizon Europe research and innovation
programme (RanGe project, Grant Agreement No.~101199772). J.J.\ is partially supported by grant OCENW.M20.251 from the research program Open Competitie
ENW, which is (partly) financed by the Dutch Research Council (NWO). D.T.\ is
supported by the Research Foundation--Flanders (FWO), Odysseus~II programme,
grant no.~G0DBZ23N.

\subsection*{Declaration of generative AI and AI-assisted technologies in the
writing process}
The authors are responsible for the architecture of the paper  and  proofs of the main results. During the preparation of this manuscript, the authors used ChatGPT
(Sol 5.6) to assist with editorial restructuring, proof presentation, bibliographical research, symbolic and numeric computations and
\LaTeX{} consistency. The authors   reviewed and edited all suggestions and
take full responsibility for the content of the article.

\end{document}